\documentclass[12pt,a4paper,oneside]{article}
\usepackage{styling}
\usepackage[section]{placeins}

\begin{document}

\title{Invariant forms on irreducible modules of simple algebraic groups}
\author{Mikko Korhonen\thanks{Section de mathématiques, École Polytechnique Fédérale de Lausanne, CH-1015 Lausanne, Switzerland} \thanks{Email address: mikko.korhonen@epfl.ch} \thanks{The author was supported by a grant from the Swiss National Science Foundation (grant number $200021 \_ 146223$).}}
\maketitle
\hrule


\begin{abstract}
Let $G$ be a simple linear algebraic group over an algebraically closed field $\fieldsymbol$ of characteristic $p \geq 0$ and let $V$ be an irreducible rational $G$-module with highest weight $\lambda$. When $V$ is self-dual, a basic question to ask is whether $V$ has a non-degenerate $G$-invariant alternating bilinear form or a non-degenerate $G$-invariant quadratic form. 

If $p \neq 2$, the answer is well known and easily described in terms of $\lambda$. In the case where $p = 2$, we know that if $V$ is self-dual, it always has a non-degenerate $G$-invariant alternating bilinear form. However, determining when $V$ has a non-degenerate $G$-invariant quadratic form is a classical problem that still remains open. We solve the problem in the case where $G$ is of classical type and $\lambda$ is a fundamental highest weight $\omega_i$, and in the case where $G$ is of type $A_l$ and $\lambda = \omega_r + \omega_s$ for $1 \leq r < s \leq l$. We also give a solution in some specific cases when $G$ is of exceptional type.

As an application of our results, we refine Seitz's $1987$ description of maximal subgroups of simple algebraic groups of classical type. One consequence of this is the following result. If $X < Y < \SL(V)$ are simple algebraic groups and $V \downarrow X$ is irreducible, then one of the following holds: (1) $V \downarrow Y$ is not self-dual; (2) both or neither of the modules $V \downarrow Y$ and $V \downarrow X$ have a non-degenerate invariant quadratic form; (3) $p = 2$, $X = \SO(V)$, and $Y = \Sp(V)$.
\end{abstract}

\maketitle

\newpage
\tableofcontents
\newpage

\section{Introduction}

\noindent Let $V$ be a finite-dimensional vector space over an algebraically closed field $\fieldsymbol$ of characteristic $p \geq 0$.

A fundamental problem in the study of simple linear algebraic groups over $\fieldsymbol$ is the determination of maximal closed connected subgroups of simple groups of classical type ($\SL(V)$, $\Sp(V)$ and $\SO(V)$). Seitz \cite{SeitzClassical} has shown that up to a known list of examples, these are given by the images of $p$-restricted, tensor-indecomposable irreducible rational representations $\varphi: G \rightarrow \GL(V)$ of simple algebraic groups $G$ over $\fieldsymbol$.

Then given such an irreducible representation $\varphi$, one should still determine which of the groups $\SL(V)$, $\Sp(V)$ and $\SO(V)$ contain $\varphi(G)$. In most cases the answer is known. 

\begin{itemize}
\item If $V$ is not self-dual, then $\varphi(G)$ is only contained in $\SL(V)$. Furthermore, we know when $V$ is self-dual (see Section \ref{formssection}).
\item If $p \neq 2$ and $V$ is self-dual, then $\varphi(G)$ is contained in $\Sp(V)$ or $\SO(V)$, but not both \cite[Lemma 78, Lemma 79]{SteinbergNotes}. Furthermore, we know for which irreducible representations the image is contained in $\Sp(V)$ and for which the image is contained in $\SO(V)$ (see Section \ref{formssection}).
\item If $p = 2$ and $V$ is self-dual, then $\varphi(G)$ is contained in $\Sp(V)$ \cite[Lemma 1]{Fong}.
\end{itemize}

Currently what is still missing is a method for determining in characteristic two when exactly $\varphi(G)$ is contained in $\SO(V)$. This problem is the main subject of this paper, and we can state it equivalently as follows.

\begin{prob}\label{mainproblem}
Assume that $p = 2$ and let $L_G(\lambda)$ be an irreducible $G$-module with highest weight $\lambda$. When does $L_G(\lambda)$ have a non-degenerate $G$-invariant quadratic form?
\end{prob}

This is a nontrivial open problem. There is some literature on the subject \cite{Willems}, \cite{SinWillems}, \cite{GowWillems1}, \cite{GowWillems2}, \cite{GaribaldiNakano}, but currently only partial results are known. The main result of this paper is a solution to Problem \ref{mainproblem} in the following cases:

\begin{itemize}
\item $G$ is of classical type ($A_l$, $B_l$, $C_l$ or $D_l$) and $\lambda$ is a fundamental dominant weight $\omega_r$ for some $1 \leq r \leq l$ (Theorem \ref{mainprop}).
\item $G$ is of type $A_l$ and $\lambda = \omega_r + \omega_s$ for $1 \leq r < s \leq l$ (Theorem \ref{quadpropA}).
\end{itemize}


In the case where $G$ is of exceptional type, we will give some partial results in Section \ref{exceptionalsection}. For $G$ of type $G_2$ and $F_4$, we are able to give a complete solution (Proposition \ref{prop:g2}, Proposition \ref{F4result}). For types $E_6$, $E_7$, and $E_8$, we give the answer for some specific $\lambda$ (Table \ref{table:typeE}). In the final section of this paper, we will give various applications of our results and describe some open problems motivated by Problem \ref{mainproblem}. 

One particular application, given in subsection \ref{subsec:applmax}, is a refinement of Seitz's \cite{SeitzClassical} description of maximal subgroups of simple algebraic groups of classical type. In \cite{SeitzClassical}, Seitz gives a full list of all non-maximal irreducible subgroups of $\SL(V)$, but the question of which classical groups contain the image of an irreducible representation is not considered. For example, it is possible that we have a proper inclusion $X < Y$ of irreducible subgroups of $\SL(V)$ such that $X$ is a maximal subgroup of $\SO(V)$. In subsection \ref{subsec:applmax}, we go through the list given by Seitz and describe when exactly such inclusions occur. In particular, our results have the consequence (Theorem \ref{theorem:maxconseq}) that if $X < Y < \SL(V)$ are simple algebraic groups and $V \downarrow X$ is irreducible, then one of the following holds:

\begin{enumerate}[(i)]
\item The module $V \downarrow Y$ is not self-dual;
\item Both $V \downarrow X$ and $V \downarrow Y$ have an invariant quadratic form;
\item Neither of $V \downarrow X$ or $V \downarrow Y$ has an invariant quadratic form;
\item $p = 2$, $X = \SO(V)$ and $Y = \Sp(V)$.
\end{enumerate}

The general approach for the proofs of our main results is as follows. A basic method used throughout is Theorem 9.5. from \cite{GaribaldiNakano} (recorded here in Proposition \ref{char2quad}), which allows one to determine whether $L_G(\lambda)$ is orthogonal (when $p = 2$) by computing within the Weyl module $V_G(\lambda)$. For $G$ of classical type and $V$ irreducible with fundamental highest weight, we will first prove our result in the case where $G$ is of type $C_l$ (Proposition \ref{fundamentalquadC}). From this the result for other classical types is a fairly straightforward consequence (Theorem \ref{mainprop}). 

In the case where $G$ is of type $C_l$ and $\lambda = \omega_r$, and in the case where $G$ is of type $A_l$ and $\lambda = \omega_r + \omega_s$, the proofs of our results are heavily based on various results from the literature on the representation theory of $G$. We will use results about the submodule structure of the Weyl module $V_G(\lambda)$ found in \cite{PremetSuprunenko}, \cite{Adamovich1} and \cite{Adamovich2}. We will also need the first cohomology groups of $L_G(\lambda)$ which were computed in \cite{KleshchevSheth} and \cite[Corollary 3.6]{KleshchevSheth2}. One more key ingredient in our proof will be the results of Baranov and Suprunenko in \cite{BaranovSuprunenkoC} and \cite{BaranovSuprunenkoA}, which give the structure of the restrictions of $L_G(\lambda)$ to certain subgroups defined in terms of the natural module of $G$.

\section*{Notation and terminology}

\noindent We fix the following notation and terminology. Throughout the whole text, let $\fieldsymbol$ be an algebraically closed field of characteristic $p \geq 0$. All groups that we consider are linear algebraic groups over $\fieldsymbol$, and by a subgroup we always mean a closed subgroup. All modules and representations will be finite-dimensional and rational. 

Unless otherwise mentioned, $G$ denotes a simply connected simple algebraic group over $\fieldsymbol$ with $l = \rank G$, and $V$ will be a finite-dimensional vector space over $\fieldsymbol$. Throughout we will view $G$ as its group of rational points over $K$, and most of the time $G$ will studied either as a Chevalley group constructed with the usual Chevalley construction (see e.g. \cite{SteinbergNotes}), or as a classical group with its natural module (i.e. $G = \SL(V)$, $G = \Sp(V)$ or $G = \SO(V)$). We will occasionally denote $G$ by its type, so notation such as $G = C_l$ means that $G$ is a simply connected simple algebraic group of type $C_l$. 

We fix the following notation, as in \cite{JantzenBook}.

\begin{itemize}
\item $T$: a maximal torus of $G$, with character group $X(T)$.
\item $X(T)^+$: the set of dominant weights for $G$, with respect to some system of positive roots.
\item $\ch V:$ the character of a $G$-module $V$. Here $\ch V$ is an element of $\Z[X(T)]$. 
\item $\omega_1, \omega_2, \ldots, \omega_l:$ the fundamental dominant weights in $X(T)^+$. We use the standard Bourbaki labeling of the simple roots, as given in \cite[11.4, pg. 58]{Humphreys}.
\item $L(\lambda)$, $L_G(\lambda):$ the irreducible $G$-module with highest weight $\lambda \in X(T)^+$.
\item $V(\lambda)$, $V_G(\lambda):$ the Weyl module for $G$ with highest weight $\lambda \in X(T)^+$.
\item $\rad V(\lambda):$ unique maximal submodule of $V(\lambda)$.
\end{itemize}

For a dominant weight $\lambda \in X(T)^+$, we can write $\lambda = \sum_{i = 1}^l m_i \omega_i$ where $m_i \in \Z_{\geq 0}$. We say that $\lambda$ is \emph{$p$-restricted} if $p = 0$, or if $p > 0$ and $0 \leq m_i \leq p-1$ for all $1 \leq i \leq l$. The irreducible representation $L_G(\lambda)$ is said to be \emph{$p$-restricted} if $\lambda$ is $p$-restricted.

A bilinear form $b$ is \emph{non-degenerate}, if its \emph{radical} $\rad b = \{v \in V: b(v,w) = 0 \text{ for all } w \in V \}$ is zero. For a quadratic form $Q : V \rightarrow \fieldsymbol$ on a vector space $V$, its \emph{polarization} is the bilinear form $b_Q$ defined by $b_Q(v,w) = Q(v+w) - Q(v) - Q(w)$ for all $v, w \in V$.  We say that $Q$ is \emph{non-degenerate}, if its \emph{radical} $\rad Q = \{v \in \rad b_Q : Q(v) = 0 \}$ is zero.

For a $\fieldsymbol G$-module $V$, a bilinear form $(-,-)$ is \emph{$G$-invariant} if $(gv,gw) = (v,w)$ for all $g \in G$ and $v,w \in V$. A quadratic form $Q: V \rightarrow \fieldsymbol$ is \emph{$G$-invariant} if $Q(gv) = Q(v)$ for all $g \in G$ and $v \in V$. We say that $V$ is \emph{symplectic} if it has a non-degenerate $G$-invariant alternating bilinear form, and we say that $V$ is \emph{orthogonal} if it has a non-degenerate $G$-invariant quadratic form.

Note that if $V$ has a $G$-invariant bilinear form, then for $\lambda, \mu \in X(T)$ the weight spaces $V_{\lambda}$ and $V_{\mu}$ are orthogonal if $\lambda \neq -\mu$. Thus to compute the form on $V$ it is enough to work in the zero weight space of $V$ and $V_{\lambda} \oplus V_{-\lambda}$ for nonzero $\lambda \in X(T)$. For a $G$-invariant quadratic form $Q$ on $V$, we have $Q(v) = 0$ for any weight vector $v \in V$ with non-zero weight.

Given a morphism $\phi: G' \rightarrow G$ of algebraic groups, we can twist representations of $G$ with $\phi$. That is, if $\rho: G \rightarrow \GL(V)$ is a representation of $G$, then $\rho \phi$ is a representation of $G'$. We denote the corresponding $G'$-module by $V^\phi$. When $p > 0$, we denote by $F: G \rightarrow G$ the Frobenius endomorphism induced by the field automorphism $x \mapsto x^p$ of $\fieldsymbol$, see for example \cite[Lemma 76]{SteinbergNotes}. When $G$ is simply connected and $\lambda \in X(T)^+$, we have $L_G(p \lambda) \cong L_G(\lambda)^F$.

If a representation $V$ of $G$ has composition series $V = V_1 \supset V_2 \supset \cdots \supset V_{t} \supset V_{t+1} = 0$ with composition factors $W_i \cong V_i / V_{i+1}$, we will occasionally denote this by $V = W_1 / W_2 / \cdots / W_t$.

\section*{Acknowledgements}

\noindent I am very grateful to Prof. Donna Testerman for suggesting the problem, and for her many helpful suggestions and comments on the earlier versions of this text. I would also like to thank Prof. Gary Seitz for providing the argument used in the proof of Lemma \ref{lemma:typeDrestriction}, and the two anonymous referees for their helpful comments for improvements.

\section{Invariant forms on irreducible $G$-modules}\label{formssection}


\noindent Let $L(\lambda)$ be an irreducible representation of a simple algebraic group $G$ with highest weight $\lambda = \sum_{i = 1}^l m_i \omega_i$. Write $d(\lambda) = \sum_{\alpha > 0} \langle \lambda, \alpha^\vee \rangle$, where the sum runs over the positive roots $\alpha$, where $\alpha^\vee$ is the coroot corresponding to $\alpha$, and $\langle \ ,\ \rangle$ is the usual dual pairing between $X(T)$ and the cocharacter group. 

We know that $L(\lambda)$ is self-dual if and only if $w_0(\lambda) = -\lambda$, where $w_0$ is the longest element in the Weyl group \cite[Lemma 78]{SteinbergNotes}. Furthermore, if $L(\lambda)$ is self-dual and $p \neq 2$, then $L(\lambda)$ is orthogonal if $d(\lambda)$ is even and symplectic if $d(\lambda)$ is odd \cite[Lemma 79]{SteinbergNotes}. Hence in characteristic $p \neq 2$ deciding whether an irreducible module is symplectic or orthogonal is a straightforward computation with roots and weights. In Table \ref{dualitytable}, we give the value of $d(\lambda) \mod{2}$ (when $\lambda = -w_0(\lambda)$) for each simple type, in terms of the coefficients $m_i$.

\begin{table}
\centering

\begin{tabular}{| l | l | l |}
\hline
Root system & When is $\lambda = -w_0(\lambda)$? & $d(\lambda) \mod{2}$ when $\lambda = -w_0(\lambda)$ \\
\hline
 & & \\
$A_l$ ($l \geq 1$) & iff $m_i = m_{l-i+1}$ for all $i$ & \begin{tabular}{@{}l@{}} $0$, when $l$ is even \\ $\frac{l+1}{2} \cdot m_{\frac{l+1}{2}}$, when $l$ is odd \end{tabular} \\ 
 & & \\ 
$B_l$ ($l \geq 2$) & always & \begin{tabular}{@{}l@{}} $0$, when $l \equiv 0, 3 \mod{4}$ \\ $m_l$, when $l \equiv 1, 2 \mod{4}$. \end{tabular} \\ 
 & & \\ 
$C_l$ ($l \geq 2$) & always & $m_1 + m_3 + m_5 + \cdots$ \\ 
 & & \\ 
$D_l$ ($l \geq 4$) & \begin{tabular}{@{}l@{}} $l$ even: always \\  $l$ odd: iff $m_l = m_{l-1}$ \end{tabular} & \begin{tabular}{@{}l@{}} $0$, when $l \not\equiv 2 \mod{4}$ \\ $m_l + m_{l-1}$, when $l \equiv 2 \mod{4}$. \end{tabular} \\ 
 & & \\ 
$G_2$ & always & $0$ \\ 
$F_4$ & always & $0$ \\ 
$E_6$ & iff $m_1 = m_6$ and $m_3 = m_5$ & $0$ \\ 
$E_7$ & always & $m_2 + m_5 + m_7$ \\ 
$E_8$ & always & $0$ \\ 
 & & \\
\hline
\end{tabular}

\caption{Values of $d(\lambda)$ modulo $2$ for a weight $\lambda = \sum_{i = 1}^l m_i \omega_i$}\label{dualitytable}

\end{table}

In characteristic $2$, it turns out that each nontrivial, irreducible self-dual module is symplectic, as shown by the following lemma found in \cite{Fong}. We include a proof for convenience.

\begin{lemma}\label{fonglemma}
Assume that $\Ch \fieldsymbol = 2$. Let $V$ be a nontrivial, irreducible self-dual representation of a group $G$. Then $V$ is symplectic for $G$.
\end{lemma}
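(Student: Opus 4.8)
The plan is to use the fact that in characteristic $2$, every alternating form is symmetric, so the only possible obstruction to symplecticity of a self-dual module is the failure of the invariant form to be alternating (i.e., the existence of a vector $v$ with $b(v,v) \neq 0$). First I would observe that since $V$ is irreducible and self-dual, $V \cong V^*$, so by Schur's lemma the space of $G$-invariant bilinear forms on $V$ is one-dimensional; fix a nonzero such form $b$, which is automatically non-degenerate because its radical is a proper (hence zero) submodule. The form $b$ is either symmetric or, after decomposing into symmetric and alternating parts, we get two invariant forms; since the space is one-dimensional and in characteristic $2$ the alternating part is also symmetric, the cleanest route is: $b$ is symmetric (every invariant form is, up to the one-dimensional ambiguity, forced to be symmetric by uniqueness, since $b^{\mathrm{T}}$ is also invariant so $b^{\mathrm{T}} = c\, b$, and $c = 1$).

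Next I would consider the map $v \mapsto b(v,v)$. Because $\Ch \fieldsymbol = 2$ and $b$ is symmetric, this map is additive: $b(v+w,v+w) = b(v,v) + 2b(v,w) + b(w,w) = b(v,v) + b(w,w)$. It is also $\fieldsymbol$-semilinear in the Frobenius-twisted sense: $b(\alpha v, \alpha v) = \alpha^2 b(v,v)$. Hence $v \mapsto b(v,v)$ defines a $G$-equivariant map from $V$ to the one-dimensional Frobenius-twist $\fieldsymbol^{(1)}$ (the field with $G$ acting trivially), i.e., an element of $\Hom_G(V^{(1)}, \fieldsymbol)$ or more precisely a semilinear functional vanishing on a $G$-submodule. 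Its kernel $W = \{v \in V : b(v,v) = 0\}$ is an $\fieldsymbol$-subspace of $V$ stable under $G$ (since $b(gv,gv) = b(v,v)$), hence a $G$-submodule. By irreducibility, $W = 0$ or $W = V$. If $W = V$, then $b$ is alternating and $V$ is symplectic, as desired.

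The remaining case is $W = 0$, which would force $\dim V = 1$; since $V$ is nontrivial and irreducible, and the action on a $1$-dimensional module of a (connected, or at least perfect) group... here I would need to rule this out using that $V$ is nontrivial: if $\dim V = 1$ then $G$ acts by a character, and $b(v,v) = b(gv,gv) = \chi(g)^2 b(v,v)$ forces $\chi(g)^2 = 1$, so $\chi(g) = 1$ in characteristic $2$, contradicting nontriviality. Hence $W = 0$ is impossible and we must have $W = V$. I expect the main (minor) obstacle to be stating the additivity-and-semilinearity observation cleanly enough that the conclusion "$W$ is a $G$-submodule" is transparent; everything else is a short application of Schur's lemma and irreducibility. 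No deep input is needed — this is why the paper calls it a lemma "included for convenience."
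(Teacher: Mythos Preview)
Your proposal is correct and follows essentially the same route as the paper: obtain a nondegenerate $G$-invariant form $b$ from self-duality, apply Schur's lemma to $b$ versus $b^{\mathrm{T}}$ to get $b^{\mathrm{T}} = c\,b$ with $c^2 = 1$, hence $c = 1$ in characteristic $2$, and then observe that $W = \{v : b(v,v) = 0\}$ is a $G$-submodule, so irreducibility and nontriviality force $W = V$. You are slightly more explicit than the paper in treating the possibility $W = 0$ (the paper dispatches it in a phrase); your semilinearity framing is correct but not strictly necessary, since it already suffices to note that $q(v) = b(v,v)$ is additive, satisfies $q(\alpha v) = \alpha^2 q(v)$, and is $G$-invariant, whence its zero set is a submodule.
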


\begin{proof}\cite{Fong} Since $V$ is self-dual, there exists an isomorphism $\varphi: V \rightarrow V^*$ of $G$-modules, which induces a non-degenerate $G$-invariant bilinear form $(-,-)$ defined by $(v,w) = \varphi(v)(w)$. Since $\varphi^t : V \rightarrow V^*$ defined by $\varphi^t(v)(w) = \varphi(w)(v)$ is also an isomorphism of $G$-modules, by Schur's lemma there exists a scalar $c$ such that $(v,w) = c(w,v)$ for all $v, w \in V$. Then $(v,w) = c^2 (v,w)$, so $c^2 = 1$ because $(-,-)$ is nonzero. Because we are in characteristic two, it follows that $c = 1$, so $(-,-)$ is a symmetric form. Now $\{v \in V : (v,v) = 0\}$ is a submodule of $G$. Because $V$ is nontrivial and irreducible, this submodule must be all of $V$ and so $(-,-)$ is alternating.\end{proof}

Lemma \ref{fonglemma} above shows that the image of any irreducible self-dual representation lies in $\Sp(V)$. The following general result reduces determining whether $L(\lambda)$ is orthogonal (in characteristic two) to a computation within the Weyl module $V(\lambda)$.

\begin{prop}\label{char2quad}
Assume that $\Ch \fieldsymbol = 2$. Let $\lambda \in X(T)^+$ be nonzero, $\lambda = -w_0(\lambda)$ and suppose that $\lambda \neq \omega_1$ if $G$ has type $C_l$. Then

\begin{enumerate}[\normalfont (i)]
\item The Weyl module $V(\lambda)$ has a nonzero $G$-invariant quadratic form $Q$, unique up to scalar.
\item The unique maximal submodule of $V(\lambda)$ is equal to $\rad b_Q$.
\item The irreducible module $L(\lambda)$ has a nonzero, $G$-invariant quadratic form if and only if $\rad Q = \rad b_Q$. If this is not the case, then $\rad Q$ is a submodule of $\rad b_Q$ with codimension $1$, and $\operatorname{H}^1(G, L(\lambda)) \neq 0$.
\item If $V(\lambda)$ has no trivial composition factor, then $L(\lambda)$ is orthogonal.
\end{enumerate}
\end{prop}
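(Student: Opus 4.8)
The plan is to translate the existence and uniqueness of $Q$ in (i) into the (non)vanishing of $\operatorname{Ext}^1_G(\Lambda^2 V(\lambda),\fieldsymbol)$, and then to read off (ii)--(iv). First the invariant bilinear form. Since $\lambda=-w_0(\lambda)$ we have $V(\lambda)^\ast\cong H^0(\lambda)$, and $\operatorname{Hom}_G(V(\lambda),H^0(\lambda))$ is one-dimensional: a nonzero map has image $L(\lambda)$ (the head of $V(\lambda)$ and the socle of $H^0(\lambda)$), and is then determined up to scalar. So $V(\lambda)$ carries a $G$-invariant bilinear form $b$, unique up to scalar, whose radical is the kernel of the canonical map $V(\lambda)\to H^0(\lambda)$, i.e. the unique maximal submodule $\rad V(\lambda)$. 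The transpose of $b$ is again $G$-invariant, so $b$ is symmetric by one-dimensionality; and in characteristic $2$ it is in fact alternating: $v\mapsto b(v,v)$ is additive (as $b(v+w,v+w)=b(v,v)+b(w,w)$) and satisfies $b(cv,cv)=c^2b(v,v)$, so its kernel $N$ is a $G$-submodule containing $\rad V(\lambda)$ with $V(\lambda)/N$ at most one-dimensional and trivial; as $\rad V(\lambda)$ is maximal and $L(\lambda)$ is nontrivial, $N\neq\rad V(\lambda)$, hence $N=V(\lambda)$.

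For (i), the $G$-invariant quadratic forms on $V(\lambda)$ form the space $(S^2V(\lambda)^\ast)^G=\operatorname{Hom}_G(\Gamma^2V(\lambda),\fieldsymbol)$, where $\Gamma^2V(\lambda)$ is the submodule of $V(\lambda)^{\otimes2}$ spanned by the $v\otimes v$. Sending a quadratic form to its polarization gives an injection into $\operatorname{Hom}_G(V(\lambda),H^0(\lambda))\cong\fieldsymbol$ (a quadratic form with zero polarization is additive with $Q(cv)=c^2Q(v)$, hence factors through a trivial quotient of $V(\lambda)$, of which there is none since the head of $V(\lambda)$ is $L(\lambda)\neq\fieldsymbol$); this is the uniqueness. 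For existence, apply $\operatorname{Ext}_G(-,\fieldsymbol)$ to
\[
0\longrightarrow\Gamma^2V(\lambda)\longrightarrow V(\lambda)^{\otimes2}\longrightarrow\Lambda^2V(\lambda)\longrightarrow0 .
\]
As $V(\lambda)^{\otimes2}$ has a Weyl filtration, $\operatorname{Ext}^i_G(V(\lambda)^{\otimes2},\fieldsymbol)=0$ for $i>0$; and the restriction $\operatorname{Hom}_G(V(\lambda)^{\otimes2},\fieldsymbol)\to\operatorname{Hom}_G(\Gamma^2V(\lambda),\fieldsymbol)$ is zero, since the former is spanned by the functional $v\otimes w\mapsto b(v,w)$, which kills each $v\otimes v$. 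Hence the long exact sequence gives $\operatorname{Hom}_G(\Gamma^2V(\lambda),\fieldsymbol)\cong\operatorname{Ext}^1_G(\Lambda^2V(\lambda),\fieldsymbol)$, and (i) becomes the assertion that this group is nonzero (it is automatically at most one-dimensional). Establishing this nonvanishing is the main obstacle. For $d(\lambda)$ even it can be verified directly: the invariant form on the Weyl lattice $V_{\Z}(\lambda)$, suitably normalized, turns out to be even, so $v\mapsto\tfrac12b_{\Z}(v,v)$ is a $\Z$-valued $G$-invariant quadratic form with nonzero reduction modulo $2$. For $d(\lambda)$ odd there is no such form in characteristic $0$, so this is a genuinely characteristic-$2$ phenomenon, to be handled cohomologically via the sequence $0\to H^0(\lambda)^{F}\to S^2H^0(\lambda)\to\Lambda^2H^0(\lambda)\to0$ and the known behaviour of $\operatorname{H}^\bullet(G,H^0(\lambda)^{F})$. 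The hypothesis $\lambda\neq\omega_1$ in type $C_l$ is needed precisely because there $\Gamma^2V(\omega_1)=V(2\omega_1)$, whose head is $L(2\omega_1)\neq\fieldsymbol$, so $\operatorname{Hom}_G(\Gamma^2V(\omega_1),\fieldsymbol)=0$.

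Finally, (ii)--(iv) follow from the structure of $Q$. Its polarization $b_Q$ is $G$-invariant and nonzero (by the injectivity above), hence a nonzero multiple of $b$, so $\rad b_Q=\rad b=\rad V(\lambda)$ is the maximal submodule, proving (ii). Restricted to $\rad b_Q$, $Q$ is additive with $Q(cv)=c^2Q(v)$, so $\rad Q$ is a $G$-submodule of $\rad b_Q$ of codimension $0$ or $1$. If $\rad Q=\rad b_Q$, then $Q$ descends to a non-degenerate $G$-invariant quadratic form on $V(\lambda)/\rad b_Q=L(\lambda)$, so $L(\lambda)$ is orthogonal; conversely, a non-degenerate invariant quadratic form on $L(\lambda)$ has non-degenerate polarization (its radical is a submodule of the simple nontrivial module $L(\lambda)$, hence zero) and pulls back along $V(\lambda)\twoheadrightarrow L(\lambda)$ to an invariant quadratic form on $V(\lambda)$, which equals $Q$ up to scalar by uniqueness and so forces $\rad Q=\rad b_Q$. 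If instead $\rad Q$ has codimension $1$ in $\rad b_Q$, then $V(\lambda)/\rad Q$ is a non-split extension $0\to\fieldsymbol\to V(\lambda)/\rad Q\to L(\lambda)\to0$ (non-split because the head of $V(\lambda)$ is the simple nontrivial module $L(\lambda)$), whence $\operatorname{Ext}^1_G(L(\lambda),\fieldsymbol)=\operatorname{H}^1(G,L(\lambda))\neq0$; this is (iii). And (iv) is immediate: if $V(\lambda)$ has no trivial composition factor, the codimension-$1$ case cannot occur (it would yield the trivial composition factor $\rad b_Q/\rad Q$), so $\rad Q=\rad b_Q$ and $L(\lambda)$ is orthogonal.
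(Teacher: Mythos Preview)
The paper does not argue (i)--(iii) directly: it simply cites Theorem~9.5 and Proposition~10.1 of Garibaldi--Nakano, and then derives (iv) from (iii) via the isomorphism $\operatorname{H}^1(G,L(\lambda))\cong\operatorname{Hom}_G(\rad V(\lambda),\fieldsymbol)$. Your attempt is therefore considerably more ambitious. Your treatment of (ii)--(iv) is essentially correct and pleasantly self-contained; one small point in the converse direction of (iii) is that when you argue the polarization of a non-degenerate invariant quadratic form on $L(\lambda)$ has zero radical, you should also exclude the possibility that this radical is all of $L(\lambda)$ (but this is immediate: then $Q$ would be Frobenius-semilinear with zero locus a proper nonzero submodule of the simple module $L(\lambda)$).

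The genuine gap is in (i). You correctly reduce the existence of $Q$ to the nonvanishing of $\operatorname{Ext}^1_G(\Lambda^2V(\lambda),\fieldsymbol)$ (using, implicitly, the Donkin--Mathieu theorem that $V(\lambda)^{\otimes 2}$ has a Weyl filtration), but you do not establish this nonvanishing. You explicitly call it ``the main obstacle'' and only outline possible approaches: a lattice argument for $d(\lambda)$ even, and a cohomological one via $0\to H^0(\lambda)^F\to S^2H^0(\lambda)\to\Lambda^2H^0(\lambda)\to0$ for $d(\lambda)$ odd. Neither is carried through---you assert without proof that the normalized integral form is even in the first case, and in the second you give nothing beyond naming the short exact sequence. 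This is precisely the substantive content of the proposition, and it is what Garibaldi--Nakano actually prove; their argument is not short. As written, your proof of (i) is incomplete.
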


\begin{proof}
See Theorem 9.5. and Proposition 10.1. in \cite{GaribaldiNakano} for (i), (ii) and (iii). The claim in (iii) about $\operatorname{H}^1(G, L(\lambda))$ can also be deduced from \cite[Satz 2.5]{Willems}. The claim (iv) is a consequence of (iii), since $\operatorname{H}^1(G, L(\lambda)) \cong \Ext_G^1(\fieldsymbol, L(\lambda)) \cong \Hom_G(\rad V(\lambda), \fieldsymbol)$ by \cite[II.2.14]{JantzenBook}.
\end{proof}

In the case where $G$ is of type $C_l$ and $\lambda = \omega_1$, we have the following result which is well known. We include a proof for completeness.

\begin{prop}\label{naturalquad}
Assume that $\Ch \fieldsymbol = 2$ and that $G$ is of type $C_l$. Then $V = V(\omega_1) = L(\omega_1)$ has no nonzero $G$-invariant quadratic form.
\end{prop}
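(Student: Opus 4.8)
The plan is to argue that the natural module $V = L(\omega_1)$ of $\Sp(V) = \Sp_{2l}(\fieldsymbol)$ does not admit a nonzero invariant quadratic form, where here $\dim V = 2l$ and $V$ carries the defining non-degenerate alternating bilinear form $f$ preserved by $G$. The key point is that any $G$-invariant quadratic form $Q$ on $V$ must have polarization $b_Q$ which is itself a $G$-invariant bilinear form; since $V$ is irreducible and self-dual with a one-dimensional space of invariant bilinear forms (by Schur's lemma, the space of such forms is $\Hom_G(V, V^*)$, which is one-dimensional), we have $b_Q = c f$ for some scalar $c \in \fieldsymbol$.

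First I would handle the case $c \neq 0$: then $b_Q = cf$ is alternating, so $b_Q(v,v) = 0$ for all $v$, which forces $Q$ to be a semilinear (additive, but $Q(av) = a^2 Q(v)$) map $V \to \fieldsymbol$; restricting to a root subgroup or examining weight vectors, one sees $Q$ must vanish on all nonzero weight vectors (as noted in the excerpt, $Q(v) = 0$ for any weight vector of nonzero weight), and since $G = C_l$ has no zero weight in $V(\omega_1)$ (all weights $\pm \varepsilon_i$ are nonzero), $Q$ vanishes on a spanning set of weight vectors. Because $Q$ is $2$-semilinear rather than linear this does not immediately give $Q = 0$, so I would instead use that $Q$ vanishes on $e_i$ and $f_i$ for a symplectic basis, and then compute $Q(e_i + f_i)$: invariance under a suitable unipotent element (a transvection, or the element of $\Sp_2 \leq \Sp_{2l}$ acting on the hyperbolic plane) conjugates $e_i + f_i$ to other vectors on which $Q$ is already known to vanish, forcing $Q(e_i + f_i) = 0$ as well; iterating over enough vectors in the $\mathbb{F}_2$-span and using $2$-semilinearity to extend over $\fieldsymbol$ gives $Q \equiv 0$, a contradiction.

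For the case $c = 0$: then $b_Q = 0$, so $Q$ is a $G$-invariant $2$-semilinear form, i.e.\ $Q$ factors as a linear map through the Frobenius twist, giving a $G$-module homomorphism $V^{(1)} \to \fieldsymbol$ (equivalently $Q \in \Hom_G(V, \fieldsymbol^{(-1)})$, a nonzero $G$-map from $L(\omega_1)$ to a trivial module after untwisting). But $L(\omega_1)$ is nontrivial and irreducible, so $\Hom_G(L(\omega_1), \fieldsymbol) = 0$, forcing $Q = 0$, again a contradiction. Combining the two cases shows no nonzero $G$-invariant quadratic form exists.

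The main obstacle I anticipate is the $c \neq 0$ case: one cannot conclude $Q = 0$ merely from its vanishing on all weight vectors because $Q$ is quadratic (not linear) and in characteristic $2$ a nonzero quadratic form can have polarization zero. The careful point is to exploit $G$-invariance of $Q$ on sums like $e_i + f_i$, using elements of $\Sp(V)$ that move these vectors around, to pin down the remaining values; this is where a small explicit computation inside $\SL_2(\fieldsymbol) \cong \Sp_2(\fieldsymbol)$ acting on a hyperbolic plane does the work. Alternatively, and perhaps more cleanly, one can observe directly that the $\Sp_2(\fieldsymbol)$-submodule structure (the hyperbolic plane is the natural $\SL_2$-module, which is irreducible and nontrivial in characteristic $2$) already forbids a nonzero invariant quadratic form on each plane, and $G$-invariance then propagates this to all of $V$.
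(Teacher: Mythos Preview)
Your case split on whether $b_Q$ is a nonzero multiple of the symplectic form $f$ is a reasonable strategy, and the $c=0$ case is handled correctly. However, there is a genuine error in the $c\neq 0$ case: you assert that ``$b_Q=cf$ is alternating, so $b_Q(v,v)=0$ for all $v$, which forces $Q$ to be a semilinear (additive) map.'' This is false. In characteristic $2$ the polarization $b_Q$ is \emph{always} alternating, and $Q$ is additive precisely when $b_Q=0$. When $c\neq 0$ one has $Q(v+w)=Q(v)+Q(w)+cf(v,w)$, which is not additive. Your subsequent argument (``iterating over enough vectors in the $\mathbb{F}_2$-span and using $2$-semilinearity to extend over $K$ gives $Q\equiv 0$'') rests on this false premise.

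That said, the transvection idea you mention does salvage the case, just via a different contradiction than the one you describe. Since $Q$ vanishes on nonzero weight vectors, $Q(e_1)=Q(f_1)=0$, and the polarization identity gives $Q(e_1+f_1)=cf(e_1,f_1)=c$. On the other hand the transvection in $\Sp_2\leq \Sp_{2l}$ sending $f_1\mapsto e_1+f_1$ (in characteristic $2$) shows by invariance that $Q(e_1+f_1)=Q(f_1)=0$, forcing $c=0$. So the contradiction is immediate and one never needs to show $Q\equiv 0$ on all of $V$.

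The paper's proof is considerably shorter and avoids the case split altogether: it observes that $\Sp(V)$ acts transitively on nonzero vectors of $V$ (Witt's lemma), so any $G$-invariant rational map $V\to K$ is constant on $V\setminus\{0\}$, hence constant everywhere; since $Q(0)=0$, this forces $Q=0$. Your transvection step is really a fragment of this transitivity, so the underlying mechanism is the same, but the paper packages it as a single one-line observation rather than a form-theoretic analysis.
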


\begin{proof}
(\cite[Example 8.4]{GaribaldiNakano}) The claim follows from a more general result that any $G$-invariant rational map $f: V \rightarrow \fieldsymbol$ is constant. Indeed, for such $f$ we have $f(gv) = f(v)$ for all $g \in G$, $v \in V$. Because $G$ acts transitively on nonzero vectors in $V$, it follows that $f(w) = f(v)$ for all $w \in V - \{0 \}$. Thus $f(w) = f(v)$ for all $w \in V$ since $f$ is rational.
\end{proof}

\begin{lemma}\label{tensorlemma}
Let $V$ and $W$ be $G$-modules. If $V$ and $W$ are both symplectic for $G$, then $V \otimes W$ is orthogonal for $G$.
\end{lemma}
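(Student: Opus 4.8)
The plan is to construct a non-degenerate $G$-invariant quadratic form on $V \otimes W$ canonically from the two symplectic forms. When $p \neq 2$ there is nothing to do: if $b_V$ on $V$ and $b_W$ on $W$ are non-degenerate $G$-invariant alternating forms, then $b_V \otimes b_W$ is symmetric, non-degenerate and $G$-invariant, and $v \mapsto \tfrac{1}{2}(b_V \otimes b_W)(v,v)$ is the desired quadratic form; so I assume $p = 2$, which is the case of interest.

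First I would use $b_V$ to identify $V \otimes W \cong V^* \otimes W \cong \Hom(V, W)$ as $G$-modules, the composite sending $v \otimes w$ to the map $v' \mapsto b_V(v,v')\,w$. For $\phi \in \Hom(V, W)$, the pullback $\phi^* b_W$, that is, the bilinear form $(v, v') \mapsto b_W(\phi v, \phi v')$ on $V$, is alternating (being the pullback of an alternating form), hence lies in $\Lambda^2 V^*$; the assignment $\phi \mapsto \phi^* b_W$ is $G$-equivariant and homogeneous of degree $2$. On the other side, the inverse of the isomorphism $V \to V^*$ afforded by $b_V$ is a $G$-invariant element $\beta_V \in \Lambda^2 V$ (it is alternating because $b_V$ is), which pairs with $\Lambda^2 V^*$ under the canonical pairing. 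Setting $Q(\phi) = \langle \phi^* b_W, \beta_V \rangle$ then defines a $G$-invariant quadratic form on $\Hom(V, W) \cong V \otimes W$; its $G$-invariance and its homogeneity of degree $2$ are immediate from the construction.

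The remaining point is that $Q$ is non-degenerate, which I would establish by computing its polarization $b_Q$. Expanding gives $b_Q(\phi, \psi) = \langle B_{\phi,\psi}, \beta_V \rangle$, where $B_{\phi, \psi} \in \Lambda^2 V^*$ is the alternating form $(v, v') \mapsto b_W(\phi v, \psi v') + b_W(\psi v, \phi v')$. Now fix $\phi \neq 0$: choose $v_0$ with $\phi v_0 \neq 0$, and then (using that $b_W$ is non-degenerate) $w_1 \in W$ with $b_W(\phi v_0, w_1) \neq 0$, so that $f := b_W(\phi(-), w_1)$ is a nonzero element of $V^*$. For the rank-one map $\psi = \xi \otimes w_1$ (that is, $v \mapsto \xi(v)\,w_1$) with $\xi \in V^*$, a short computation gives $B_{\phi, \psi} = f \wedge \xi$, so that $b_Q(\phi, \psi) = \langle f \wedge \xi, \beta_V \rangle$, which is the value at $(f, \xi)$ of the non-degenerate alternating form induced by $b_V$ on $V^*$. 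Choosing $\xi$ so that this value is nonzero shows $b_Q$ is non-degenerate; then $\rad Q \subseteq \rad b_Q = 0$, so $Q$ is non-degenerate and $V \otimes W$ is orthogonal.

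I expect the main obstacle, such as it is, to be the characteristic-$2$ bookkeeping: one has to keep track of the fact that it is "alternating" (vanishing on the diagonal), and not merely "symmetric", that is preserved under both pullback and polarization, since this is precisely what prevents the pairings with the bivector $\beta_V \in \Lambda^2 V$ from collapsing to zero (a symmetric-but-not-alternating form would pair trivially with $\Lambda^2 V$). Everything else in the argument is formal and works uniformly in the characteristic.
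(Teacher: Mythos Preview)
Your argument is correct. The paper does not give its own proof of this lemma but merely cites \cite[Proposition 3.4]{SinWillems}, \cite[Proposition 9.2]{GaribaldiNakano}, and \cite[4.4, pg.~126--127]{KleidmanLiebeck}, so there is no in-paper proof to compare against directly.

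Your construction is a coordinate-free packaging of the standard argument found in those references. In explicit form, one fixes a symplectic basis $e_1,\ldots,e_n,f_1,\ldots,f_n$ of $V$, writes a general tensor as $x=\sum_i(e_i\otimes a_i+f_i\otimes b_i)$ with $a_i,b_i\in W$, and sets $Q(x)=\sum_i b_W(a_i,b_i)$; one then checks invariance and non-degeneracy by hand. Under your identification $V\otimes W\cong\Hom(V,W)$ this is exactly your formula $Q(\phi)=\sum_i b_W(\phi e_i,\phi f_i)=\langle\phi^*b_W,\beta_V\rangle$, and your non-degeneracy computation via rank-one maps $\psi=\xi\otimes w_1$ reproduces the basis computation. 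The advantage of your formulation is that $G$-invariance is immediate from the equivariance of $\phi\mapsto\phi^*b_W$ and the $G$-invariance of $\beta_V$, whereas the basis version requires checking that the formula is independent of the symplectic basis chosen. One small point of care (which you flag in your final paragraph): in characteristic~$2$ one should read $\Lambda^2 V$ as the quotient $V\otimes V/\langle v\otimes v\rangle$ and $\Lambda^2 V^*$ as the space of alternating forms, so that the pairing $\langle\,\cdot\,,\,\cdot\,\rangle$ is the evident evaluation; with this convention everything you wrote goes through verbatim.
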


\begin{proof}
See \cite[Proposition 3.4]{SinWillems}, \cite[Proposition 9.2]{GaribaldiNakano}, or \cite[4.4, pg. 126-127]{KleidmanLiebeck}.
\end{proof}

\begin{remark}\label{restrictedremark}
Assume that $\Ch \fieldsymbol = 2$. Then lemmas \ref{fonglemma} and \ref{tensorlemma} show that if $V$ is a non-orthogonal irreducible $G$-module, then $V$ must be tensor indecomposable. By Steinberg's tensor product theorem, this implies that $V$ is a Frobenius twist of $L_G(\lambda)$ for some $2$-restricted weight $\lambda \in X(T)^+$. Therefore to determine which irreducible representations of $G$ are orthogonal, it suffices to consider $V = L_G(\lambda)$ with $\lambda \in X(T)^+$ a $2$-restricted dominant weight.
\end{remark}

\section{Fundamental representations for type $C_l$}\label{typeCsection}

\noindent Throughout this section, assume that $G$ is simply connected of type $C_l$, $l \geq 2$. In this section we determine when in characteristic $2$ a fundamental irreducible representation $L(\omega_r)$, $1 \leq r \leq l$, of $G$ has a nonzero $G$-invariant quadratic form. The answer is given by the following proposition, which we will prove in what follows.

\begin{prop}\label{fundamentalquadC}
Assume $\Ch \fieldsymbol = 2$. Let $1 \leq r \leq l$. Then $L(\omega_r)$ is not orthogonal if and only if $r = 1$, or $r = 2^{i+1}$ for some $i \geq 0$ and $l + 1 \equiv 2^{i+1} + 2^i + t \mod{2^{i+2}}$, where $0 \leq t < 2^i$.
\end{prop}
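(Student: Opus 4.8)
By Proposition \ref{char2quad}(iv), $L(\omega_r)$ is orthogonal whenever $V(\omega_r)$ has no trivial composition factor, and more precisely $L(\omega_r)$ fails to be orthogonal exactly when $V(\omega_r)$ has a trivial composition factor \emph{and} the induced quadratic form on $V(\omega_r)$ restricts nontrivially to the (1-dimensional) space accounting for it — equivalently, when $\mathrm{H}^1(G,L(\omega_r)) \neq 0$ for the reason coming from that trivial factor. So the proof splits into two analyses: first, determine precisely for which $r$ the Weyl module $V(\omega_r)$ for $C_l$ in characteristic $2$ has a trivial composition factor, and second, among those $r$, decide whether the invariant quadratic form $Q$ on $V(\omega_r)$ has $\rad Q = \rad b_Q$ or drops by codimension $1$.

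**Step 1: composition factors of $V(\omega_r)$.** The fundamental Weyl module $V(\omega_r)$ for $C_l$ has the well-known realization inside $\bigwedge^r V_{\mathrm{nat}}$, with character given by the $r$-th fundamental weight, and the composition factors of $\bigwedge^r V_{\mathrm{nat}}$ in characteristic $2$ are classical (Premet–Suprunenko, or the contraction-map description): $\bigwedge^r V_{\mathrm{nat}} \cong V(\omega_r) \oplus (\text{lower }\bigwedge)$ as modules have a filtration whose factors are $L(\omega_j)$ for $j \le r$ of the same parity, and $V(\omega_r) = L(\omega_r)/V(\omega_{r-2})$-type situation via the symplectic contraction $\bigwedge^r V_{\mathrm{nat}} \to \bigwedge^{r-2} V_{\mathrm{nat}}$. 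Iterating, one gets that $V(\omega_r)$ has a trivial composition factor iff, after peeling off $L(\omega_r), L(\omega_{r-2}), \dots$ one reaches $L(\omega_0)$, and a careful bookkeeping of which of these $L(\omega_{r-2k})$ actually appear reduces to a $2$-adic condition on $l+1$ and $r$. I expect this is exactly where the arithmetic condition ``$r = 2^{i+1}$ and $l+1 \equiv 2^{i+1}+2^i+t \bmod 2^{i+2}$, $0 \le t < 2^i$'' emerges: the binomial-coefficient parities governing the contraction maps (via Lucas' theorem) force $r$ to be a power of $2$ (well, twice a power of $2$) and pin down $l+1$ modulo the next power of $2$.

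**Step 2: the form on $V(\omega_r)$ when a trivial factor is present.** Once we know $V(\omega_r)$ has a unique trivial composition factor (sitting in $\rad V(\omega_r)$, hence in $\rad b_Q$ by Proposition \ref{char2quad}(ii)), we must check whether $Q$ vanishes on the corresponding line. Here I would argue by exhibiting an explicit weight-zero vector in the preimage of the trivial factor inside $V(\omega_r) \subseteq \bigwedge^r V_{\mathrm{nat}}$ — a signed sum of the ``diagonal'' basis $r$-vectors $e_{i_1} \wedge \cdots$ pairing a symplectic basis — and computing $Q$ on it directly from the formula for $Q$ on $\bigwedge^r$ of a symplectic space (built from the hyperbolic quadratic form on pairs). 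Alternatively, and perhaps more cleanly, one uses the $\mathrm{H}^1$ criterion in Proposition \ref{char2quad}(iii) together with the known first cohomology of $L(\omega_r)$ for $C_l$: if $\mathrm{H}^1(G,L(\omega_r)) = 0$ then $L(\omega_r)$ is automatically orthogonal even though $V(\omega_r)$ has a trivial factor, and if it is nonzero one still has to rule out that the nonvanishing comes from a different extension. I would combine both: use the cohomology computations to reduce to a small number of candidate $r$, then do the explicit $Q$-computation for those.

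**Main obstacle.** The hard part is Step 1 — turning the recursive/contraction description of the composition factors of $\bigwedge^r V_{\mathrm{nat}}$ (symplectic case, characteristic $2$) into the clean closed-form $2$-adic condition on $l+1$ and $r$. The parity bookkeeping of the iterated contractions is delicate, and one must be careful that ``$V(\omega_r)$ has a trivial factor'' is genuinely equivalent to the stated congruence and not merely implied by it; I would organize this as an induction on $r$ (or on $i$ with $r=2^{i+1}$), checking the base cases $r=1,2,4$ by hand and using the Frobenius/contraction structure for the inductive step. Step 2 is comparatively routine once the right explicit vector is written down, though verifying $Q \ne 0$ rather than $Q = 0$ on that line is exactly the dichotomy the proposition is detecting, so it cannot be skipped.
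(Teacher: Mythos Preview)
Your overall strategy --- filter with the $\operatorname{H}^1$ criterion of Proposition~\ref{char2quad}(iii), then compute $Q$ on an explicit weight-zero vector --- is the paper's approach. But you have the location of the arithmetic inverted, and this matters. The condition ``$V(\omega_r)$ has a trivial composition factor'' (by Premet--Suprunenko, Theorem~\ref{premetsuprunenko_thm}) is that $r$ is even and $l+1$ contains $r/2$ to base $2$; this does \emph{not} force $r$ to be twice a power of $2$. The condition $\operatorname{H}^1(G,L(\omega_r))\neq 0$, imported from Kleshchev--Sheth (Corollary~\ref{extcorollary}), gives $r=2^{i+1}$ and $l+1\equiv 2^i+t\pmod{2^{i+1}}$ --- note the modulus is $2^{i+1}$, not the $2^{i+2}$ in the statement. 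Exactly half of the cases with $\operatorname{H}^1\neq 0$ turn out to be orthogonal, and that final bit of resolution comes entirely from your Step~2. So Step~1 is not the ``main obstacle''; in the paper it is dispatched by citation.

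Step~2, which you call ``comparatively routine,'' is where the work is. The paper constructs an explicit $C_{l-t}$-fixed vector $\gamma\in\wedge^{2^{i+1}}(V)$, shows it lies in $\rad V(\omega_{2^{i+1}})$, and then --- the genuinely delicate step you are missing --- proves that $Q$ vanishes on a complement $M$ to $\langle\gamma\rangle$ inside $\rad V(\omega_{2^{i+1}})$, so that orthogonality is equivalent to $Q(\gamma)=0$. Establishing $Q|_M=0$ requires knowing that $M$ has no trivial composition factors upon restriction to $C_{l-t}$, which uses Baranov--Suprunenko's branching rules (Theorem~\ref{typeCrestriction}) and some nontrivial $2$-adic lemmas. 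Only then does one compute $Q(\gamma)=\tfrac{1}{2}\binom{l-t}{2^i}$ and invoke Kummer's theorem to obtain the modulus-$2^{i+2}$ condition. Finally, $r=1$ needs separate treatment (Proposition~\ref{naturalquad}): here $V(\omega_1)$ is irreducible, so Proposition~\ref{char2quad} does not even apply.
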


The following examples are immediate consequences of Proposition \ref{fundamentalquadC}.

\begin{esim}\label{exampleC2}
If $\Ch \fieldsymbol = 2$, then $L(\omega_2)$ is orthogonal if and only if $l \not\equiv 2 \mod{4}$.
\end{esim}

\begin{esim}\label{exampleC4}
If $\Ch \fieldsymbol = 2$, then $L(\omega_4)$ is orthogonal if and only if $l \not\equiv 5,6 \mod{8}$.
\end{esim}

\begin{esim}\label{spinexample}
If $\Ch \fieldsymbol = 2$, then $L(\omega_l)$ is orthogonal if and only if $l \geq 3$ (this was also proven in \cite[Corollary 4.3]{GowSpin}) and $L(\omega_{l-1})$ is orthogonal if and only if $l = 3$, $l = 4$ or $l \geq 6$.
\end{esim}

A rough outline for the proof of Proposition \ref{fundamentalquadC} is as follows. Various results from the literature about the representation theory of $G$ will reduce the claim to specific $r$ which must be considered. We will then study $V(\omega_r)$ by using a standard realization of it in the exterior algebra of the natural module $V$ of $G$. Here we can explicitly describe a nonzero $G$-invariant quadratic form $Q$ on $V(\omega_r)$. We will then find a vector $\gamma \in \rad V(\omega_r)$ such that $L(\omega_r)$ is orthogonal if and only if $Q(\gamma) = 0$. The proof is finished by computing $Q(\gamma)$.

\subsection{Representation theory}

\noindent The composition factors of $V(\omega_r)$ were determined in odd characteristic by Premet and Suprunenko in \cite[Theorem 2]{PremetSuprunenko}. Independently, the composition factors and the submodule structure of $V(\omega_r)$ were found in arbitrary characteristic by Adamovich in \cite{Adamovich1}, \cite{Adamovich2}. Using the results of Adamovich, it was shown in \cite[Corollary 2.9]{BaranovSuprunenkoC} that the result of Premet and Suprunenko also holds in characteristic two. 

To state the result about composition factors of $V(\omega_r)$, we need to make a few definitions first. Let $a, b \in \Z_{\geq 0}$ and write $a = \sum_{i \geq 0} a_i p^i$ and $b = \sum_{i \geq 0} b_i p^i$ for the expansions of $a$ and $b$ in base $p$. We say that \emph{$a$ contains $b$ to base $p$} if for all $i \geq 0$ we have $b_i = a_i$ or $b_i = 0$. \footnote{Note that in \cite{PremetSuprunenko} there is a typo, the definition on pg. 1313, line 9 should say ``for every $i = 0,1, \ldots, n$ $\ldots$''} For $r \geq 1$, we define $J_p(r)$ to be the set of integers $0 \leq j \leq r$ such that $j \equiv r \mod{2}$ and $l+1-j$ contains $\frac{r-j}{2}$ to base $p$. The main result of \cite{PremetSuprunenko}, also valid in characteristic $2$, can be then described as follows. Here we set $\omega_0 = 0$, so that $L(\omega_0)$ is the trivial irreducible module.

\begin{lause}\label{premetsuprunenko_thm}
Let $1 \leq r \leq l$. Then in the Weyl module $V(\omega_r)$, each composition factor has multiplicity $1$, and the set of composition factors is $\{L(\omega_j) : j \in J_p(r) \}$.
\end{lause}

In view of Proposition \ref{char2quad} (iii), it will also be useful to know when the first cohomology group $\operatorname{H}^1(G, L(\omega_r))$ is nonzero. This has been determined by Kleshchev and Sheth in \cite{KleshchevSheth} \cite[Corollary 3.6]{KleshchevSheth2}.

\begin{lause}
Let $1 \leq r \leq l$ and write $l+1-r = \sum_{i \geq 0} a_i p^i$ in base $p$. Then $\operatorname{H}^1(G, L(\omega_r)) \neq 0$ if and only if $r = 2(p - a_i)p^i$ for some $i$ such that $a_i > 0$, and either $a_{i+1} < p-1$ or $r < 2p^{i+1}$.
\end{lause}

In characteristic $2$, the result becomes the following.

\begin{seur}\label{extcorollary}
Assume that $\Ch \fieldsymbol = 2$. Let $1 \leq r \leq l$. Then $\operatorname{H}^1(G, L(\omega_r)) \neq 0$ if and only if $r = 2^{i+1}$ for some $i \geq 0$, and $l+1 \equiv 2^i + t \mod{2^{i+1}}$ for some $0 \leq t < 2^i$.
\end{seur}

Throughout this section we will consider subgroups $C_{l'} < C_l = G$, which are embedded into $G$ as follows. Consider $G = \Sp(V)$ and let $(-,-)$ be the non-degenerate $G$-invariant alternating form $(-,-)$ on $V$. Fix a symplectic basis $e_1, \ldots, e_l, e_{-1}, \ldots, e_{-l}$ of $V$, where $(e_i, e_{-i}) = 1 = -(e_{-i}, e_i)$ and $(e_i, e_j) = 0$ for $i \neq -j$. Then for $2 \leq l' < l$, the embedding $C_{l'} < C_l$ is $\Sp(V') < \Sp(V)$, where $V' \subseteq V$ has basis $e_{\pm 1}, \ldots, e_{\pm l'}$ and $\Sp(V')$ fixes the basis vectors $e_{\pm (l'+1)}, \ldots, e_{\pm l}$.

The module structure of the restrictions $L(\omega_r) \downarrow C_{l-1}$ have been determined by Baranov and Suprunenko in \cite[Theorem 1.1 (i)]{BaranovSuprunenkoC}. We will only need to know the composition factors which occur in such a restriction, and in this case the result is the following. Below we define $L_{C_{l-1}}(\omega_r) = 0$ for $r < 0$.

\begin{lause}\label{typeCrestriction}
Let $1 \leq r \leq l$ and assume that $l \geq 3$. Set $d = \nu_p(l + 1 - r)$, and $\varepsilon = 0$ if $l + 1 - r \equiv -p^{d} \mod{p^{d+1}}$ and $\varepsilon = 1$ otherwise. Then the character of $L_{C_{l}}(\omega_r) \downarrow C_{l-1}$ is given by $$\ch L_{C_{l-1}}(\omega_r) + 2 \ch L_{C_{l-1}}(\omega_{r-1}) + \left( \sum_{k = 0}^{d-1} 2 \ch L_{C_{l-1}}(\omega_{r - 2p^k}) \right) + \varepsilon \ch L_{C_{l-1}}(\omega_{r - 2p^d})$$ where the sum in the brackets is zero if $d = 0$. 
\end{lause}

Above $\nu_p$ denotes the $p$-adic valuation on $\Z$, so for $a \in \Z^+$ we have $\nu_p(a) = d$, where $d \geq 0$ is maximal such that $p^d$ divides $a$.  Note that if $d = \nu_2(l + 1 - r)$, then $l + 1 - r \equiv 2^d \equiv -2^d \mod{2^{d+1}}$. Therefore if $\Ch \fieldsymbol = 2$, we always have $\varepsilon = 0$ in Theorem \ref{typeCrestriction}. In particular, the composition factors occurring in $L(\omega_r) \downarrow C_{l-1}$ are $L_{C_{l-1}}(\omega_r)$ and $L_{C_{l-1}}(\omega_{r-2^k})$ for $0 \leq k \leq d$.

We will now give some applications of Theorem \ref{typeCrestriction} and Theorem \ref{premetsuprunenko_thm} in characteristic two, which will be needed in our proof of Proposition \ref{fundamentalquadC}.

\begin{lemma}\label{restrictionlemmaC}
Assume that $\Ch \fieldsymbol = 2$, and let $l \geq 2^{i+1}$, where $i \geq 0$. Suppose that $l + 1 \equiv 2^i + t \mod{2^{i+1}}$, where $0 \leq t < 2^i$. Then for $t+1 \leq j \leq 2^{i+1}$, the following hold:

\begin{enumerate}[\normalfont (i)]
\item All composition factors of the restriction $L(\omega_j) \downarrow C_{l-1}$ have the form $L(\omega_{j'})$ for some $l-1 \geq j' \geq t$.
\item $L_{C_l}(\omega_j) \downarrow C_{l-t}$ has no trivial composition factors.
\end{enumerate}
\end{lemma}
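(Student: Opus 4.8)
The plan is as follows. From the characteristic~$2$ form of Theorem~\ref{typeCrestriction} one reads off, for a single restriction step, exactly which fundamental weights occur among the composition factors of $L(\omega_j)\downarrow C_{l-1}$; an elementary argument with $2$-adic valuations then bounds these weights from below, which gives~(i). Part~(ii) will follow from~(i) by induction on~$t$, the key point being that restricting $C_l$ to $C_{l-1}$ turns the data $(l,i,t)$ into $(l-1,i,t-1)$. Before starting, observe that if we write $l+1=2^{i+1}q+2^i+t$ with $0\le t<2^i$, then $q\ge 1$, since $q=0$ would give $l+1<2^{i+1}\le l$. Hence $l\ge 2^{i+1}+2^i-1$, and so for $l\ge 3$ one has $2^{i+1}\le l-1$; thus $j\le 2^{i+1}\le l-1$ throughout, and every fundamental weight occurring below is a genuine dominant weight of the relevant symplectic group. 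The only case with $l=2$ is $i=t=0$ and $j\in\{1,2\}$, where the composition factors of $L(\omega_1)\downarrow C_1$ are $L_{C_1}(\omega_1)$ and two copies of the trivial module, and those of $L(\omega_2)\downarrow C_1$ are two copies of $L_{C_1}(\omega_1)$, so that~(i) holds and~(ii) is vacuous. So from now on assume $l\ge 3$.

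For~(i): by Theorem~\ref{typeCrestriction} and the remark following it (in characteristic~$2$ one has $\varepsilon=0$), the composition factors of $L(\omega_j)\downarrow C_{l-1}$ are $L_{C_{l-1}}(\omega_j)$ together with the modules $L_{C_{l-1}}(\omega_{j-2^k})$ for $0\le k\le d:=\nu_2(l+1-j)$, where a negative subscript is read as the zero module. Each of these subscripts is $\le j\le 2^{i+1}\le l-1$, so it suffices to show that no composition factor has subscript in $\{0,1,\dots,t-1\}$; since $j\ge t+1$, this amounts to showing there is no integer $k$ with $0\le k\le d$ and $j-t<2^k\le j$. Suppose there were such a $k$. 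From $2^k\le j\le 2^{i+1}$ we get $k\le i+1$. If $k\le i$, then $2^k\mid l+1-j$, whereas $l+1\equiv 2^i+t\equiv t\pmod{2^k}$, so that $2^k\mid j-t$ — impossible, since $0<j-t<2^k$. If $k=i+1$, then $j=2^{i+1}$ and $2^{i+1}\mid l+1-2^{i+1}$, hence $2^{i+1}\mid l+1$, contradicting $l+1\equiv 2^i+t\pmod{2^{i+1}}$ with $0<2^i+t<2^{i+1}$. This proves~(i).

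For~(ii): induct on~$t$. If $t=0$ then $C_{l-t}=C_l$ and $L(\omega_j)$, with $j\ge 1$, is irreducible and nontrivial, so there is nothing to prove. Let $t\ge 1$; then $i\ge 1$ and $l-1\ge 2^{i+1}+2^i-2\ge 2^{i+1}$. By~(i), the composition factors of $L(\omega_j)\downarrow C_{l-1}$ are modules $L_{C_{l-1}}(\omega_{j'})$ with $t\le j'\le 2^{i+1}$, none of them trivial since $j'\ge t\ge 1$. Moreover the data $(l-1,i,t-1)$ satisfies the hypotheses of the lemma, as $(l-1)+1=l\equiv 2^i+(t-1)\pmod{2^{i+1}}$ and $0\le t-1<2^i$, and each $j'$ above lies in the allowed range $[(t-1)+1,\,2^{i+1}]$. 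So the induction hypothesis applies to each $L_{C_{l-1}}(\omega_{j'})$ and shows that $L_{C_{l-1}}(\omega_{j'})\downarrow C_{(l-1)-(t-1)}=L_{C_{l-1}}(\omega_{j'})\downarrow C_{l-t}$ has no trivial composition factor; since restriction is exact, neither does $L(\omega_j)\downarrow C_{l-t}$.

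The step I expect to be the crux is the $2$-adic estimate proving~(i): the content is that the precise interval $t+1\le j\le 2^{i+1}$, together with the congruence $l+1\equiv 2^i+t\pmod{2^{i+1}}$, is exactly what forces both of the cases $k\le i$ and $k=i+1$ to collapse — and, equally, that this same interval reproduces itself (with $i$ fixed and $t$ lowered by one) under a single restriction step, which is what keeps the induction in~(ii) running. Everything else is routine bookkeeping.
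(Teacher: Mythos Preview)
Your proof is correct and follows essentially the same approach as the paper: both use Theorem~\ref{typeCrestriction} to list the composition factors, then a $2$-adic argument to bound the subscripts, and both derive~(ii) from~(i) by induction on~$t$. Your case split on~$k$ (with $k\le i$ versus $k=i+1$) is a mild reorganisation of the paper's split on $d=\nu_2(l+1-j)$ (with $d\le i$ versus $d\ge i+1$), but the underlying arithmetic is the same; you are also slightly more careful than the paper in checking the upper bound $j'\le l-1$ and in treating the boundary case $l=2$ (where Theorem~\ref{typeCrestriction}, stated for $l\ge 3$, does not formally apply, though with $t=0$ the claim is trivial anyway).
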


\begin{proof}
If $t = 0$ there is nothing to prove, so suppose that $t \geq 1$. It will be enough to prove (i) as then (ii) will follow by induction on $t$. Let $d = \nu_2(l+1-j)$. Suppose first that $0 \leq d < i+1$. Now $l+1-j \equiv t-j \mod{2^i}$, so then $\nu_2(l+1-j) = \nu_2(j-t)$. By Theorem \ref{typeCrestriction}, the composition factors occurring in $L(\omega_j) \downarrow C_{l-1}$ are $L(\omega_j)$ and $L(\omega_{j-2^{k}})$ for $0 \leq k \leq d$, so the claim follows since $\nu_2(j-t) = d$ and thus $j - 2^d \geq t$.

Consider then the case where $d \geq i+1$. Then $l+1-j \equiv 2^i + (t-j) \equiv 0 \mod{2^{i+1}}$, so $j-t \equiv 2^i \mod{2^{i+1}}$. On the other hand $0 < j-t < 2^{i+1}$, so $j-t = 2^i$. By Theorem \ref{typeCrestriction}, the composition factors occurring in $L(\omega_j) \downarrow C_{l-1}$ are $L(\omega_j)$ and $L(\omega_{j-2^k})$ for $0 \leq k \leq i$ (because $j-2^k < 0$ for $i+1 \leq k \leq d$), so again the claim follows.
\end{proof}

\begin{lemma}\label{firstarithmeticlemma}
Let $x \geq 2^{i+1}$, where $i \geq 0$. Suppose that $x \equiv 2^i \mod{2^{i+1}}$. If $0 \leq k \leq 2^i$ and $x-2k$ contains $2^i - k$ to base $2$, then $k = 0$ or $k = 2^i$. 
\end{lemma}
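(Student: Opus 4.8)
The plan is to argue by contradiction. Suppose $0 < k < 2^i$; I will show that then $x - 2k$ cannot contain $2^i - k$ to base $2$, so that the only surviving possibilities are $k = 0$ and $k = 2^i$.

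First I would dispose of trivialities: if $i = 0$ there is no integer $k$ with $0 < k < 2^i = 1$, so I may assume $i \geq 1$. Put $m = 2^i - k$, so that $0 < m < 2^i$, and let $j = \nu_2(m)$ be the index of the lowest nonzero base-$2$ digit of $m$. From $0 < m < 2^i$ we get $0 \leq j \leq i - 1$, and by the choice of $j$ the $j$-th base-$2$ digit of $m = 2^i - k$ equals $1$. Note also that $x - 2k \geq x - 2^{i+1} \geq 0$, so it makes sense to speak of the base-$2$ digits of $x - 2k$.

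The key step is to show that the $j$-th base-$2$ digit of $x - 2k$ is $0$, equivalently that $2^{j+1} \mid x - 2k$. For this I would write
\[
x - 2k \;=\; \bigl(x - 2^{i+1}\bigr) \;+\; \bigl(2^{i+1} - 2k\bigr) \;=\; \bigl(x - 2^{i+1}\bigr) \;+\; 2m ,
\]
and check that $2^{j+1}$ divides each summand: the first because $x \equiv 2^i \pmod{2^{i+1}}$ forces $\nu_2(x - 2^{i+1}) = i \geq j+1$, and the second because $\nu_2(2m) = \nu_2(m) + 1 = j+1$. Hence $2^{j+1} \mid x - 2k$, so the $j$-th base-$2$ digit of $x - 2k$ is $0$ while that of $2^i - k$ is $1$; by the definition of containment to base $2$ (applied at position $j$, where $1 \neq 0$), this means $x - 2k$ does not contain $2^i - k$, the desired contradiction.

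Since the whole argument reduces to elementary $2$-adic valuations, I do not expect a genuine obstacle. The only point needing a little care is the bookkeeping that guarantees $j + 1 \leq i$ — which is exactly what makes $2^{j+1}$ divide $x - 2^{i+1}$ — and this is precisely why the boundary values $k = 0$, $k = 2^i$ (and the degenerate case $i = 0$) must be separated off at the start.
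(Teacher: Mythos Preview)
Your argument is correct and genuinely different from the paper's. The paper first substitutes $k \mapsto 2^i - k$ to reformulate the claim as ``if $x + 2k$ contains $k$ to base $2$ then $k = 0$ or $k = 2^i$'', and then splits into the two cases $0 \leq k < 2^{i-1}$ and $2^{i-1} \leq k < 2^i$, reducing modulo $2^i$ in each and using that $2k$ containing $k$ forces $k = 0$. Your approach avoids both the substitution and the case split: by locating the lowest set bit $j$ of $m = 2^i - k$ and using the decomposition $x - 2k = (x - 2^{i+1}) + 2m$, you pin down in one stroke a single position where containment must fail. This is cleaner and makes the underlying $2$-adic structure more transparent; the paper's version has the small advantage of reusing the easy fact about $2k$ containing $k$, which keeps everything at the level of digit manipulations rather than valuations.
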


\begin{proof}
If $i = 0$ there is nothing to do, so suppose that $i > 0$. Replacing $k$ by $2^i - k$, we see that it is equivalent to prove that if $x+2k$ contains $k$ to base $2$, then $k = 0$ or $k = 2^i$. 

Suppose that $0 \leq k < 2^i$ and that $x + 2k$ contains $k$ to base $2$. Consider first the case where $0 \leq k < 2^{i-1}$. Here since $x + 2k \equiv 2k \mod{2^i}$, we have that $2k$ contains $k$ to base $2$, which can only happen if $k = 0$.

Consider then $2^{i-1} \leq k < 2^i$ and write $k = 2^{i-1} + k'$, where $0 \leq k' < 2^{i-1}$. Then $x + 2k \equiv 2k' \mod{2^i}$, so $2k'$ contains $k = 2^{i-1} + k'$ to base $2$. But then $2k'$ must also contain $k'$ to base $2$, so $k' = 0$ and $k = 2^{i-1}$. In this case $x+2k \equiv 2^i + 2^i \equiv 0 \mod{2^{i+1}}$, so $x + 2k$ does not contain $k$ to base $2$, contradiction. 
\end{proof}

\begin{lemma}\label{arithmeticlemma}
Let $x \geq 2^{i+1}$, where $i \geq 0$. Suppose that $x \equiv 2^i + t \mod{2^{i+1}}$, where $0 \leq t < 2^i$. If $0 \leq 2j \leq t$ and $x-2j$ contains $2^i - j$ to base $2$, then $j = 0$.
\end{lemma}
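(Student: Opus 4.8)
\textbf{Proof plan for Lemma \ref{arithmeticlemma}.}

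The plan is to reduce this to the previous lemma, Lemma \ref{firstarithmeticlemma}, by exhibiting a suitable value of $i$ relative to which $x$ has the "centered" form $2^{i'} \bmod 2^{i'+1}$. First I would dispose of the trivial case $t = 0$: then the hypothesis $0 \le 2j \le t$ forces $j = 0$, and there is nothing to prove. So assume $t \ge 1$, and let $e = \nu_2(t)$, so that $0 \le e < i$ (since $0 < t < 2^i$) and we may write $t = 2^e t'$ with $t'$ odd. The key observation is that $x \equiv 2^i + t \equiv 2^e \pmod{2^{e+1}}$, because $2^i + 2^e t' \equiv 2^e(2^{i-e} + t') \equiv 2^e \pmod{2^{e+1}}$ as $2^{i-e} + t'$ is odd. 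Thus $x$ is "centered" at level $e$ in the sense of Lemma \ref{firstarithmeticlemma}.

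Next I would feed this into Lemma \ref{firstarithmeticlemma} with $i$ there replaced by $e$. We are given $0 \le 2j \le t < 2^i$, so in particular $0 \le j \le t/2 < 2^{e}$ when $e \ge 1$ (and $j = 0$ when $e = 0$ since then $t < 2$ forces $2j \le 1$, i.e.\ $j=0$); more carefully, $2j \le t = 2^e t' $, so $j \le 2^{e-1} t'$, which need not be below $2^e$ if $t' \ge 3$. So a direct application is not immediate, and this is the point requiring care. To handle it, I would instead argue that if $x - 2j$ contains $2^i - j$ to base $2$, then in particular, looking only at the low $e+1$ bits, $x - 2j \equiv 2^e - 2j + (\text{something})$... — more precisely I would compare the base-$2$ digits of $2^i - j$ and of $x - 2j$ in positions $0, 1, \ldots, e$. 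Since $x \equiv 2^e \pmod{2^{e+1}}$ we get $x - 2j \equiv 2^e - 2j \pmod{2^{e+1}}$, and since $j < 2^{i-1}$ while $j \le t/2$ has $\nu_2$-structure controlled by $e$, the digit of $2^i - j$ in each position $k \le e$ equals the digit of $-j$ (i.e.\ of $2^{e+1} - j$, valid since $j < 2^e \le 2^{i-1}$ when... ) — here I must be honest that the clean statement is: the bottom $e$ bits of $2^i - j$ agree with the bottom $e$ bits of $-j \bmod 2^{e+1}$, and likewise for $x - 2j$. Matching these with the containment condition and Lemma \ref{firstarithmeticlemma} applied at level $e$ forces $j \in \{0, 2^e\}$; but $2j \le t < 2^{e+1}$ rules out $j = 2^e$, leaving $j = 0$.

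The main obstacle, and the step I expect to require the most delicate bookkeeping, is precisely this digit-by-digit comparison: converting the global statement "$x - 2j$ contains $2^i - j$ to base $2$" into a local statement about the bottom $e+1$ digits that matches the hypotheses of Lemma \ref{firstarithmeticlemma} at level $e$, while correctly accounting for borrows in the subtractions $2^i - j$ and $x - 2j$. An alternative, possibly cleaner, route that I would try first is to induct on $i$ directly: if $t < 2^{i-1}$ then $x \equiv 2^{i-1} + (2^{i-1}+t) $ does not fit the shape, so instead split on whether $t < 2^{i-1}$ or $t \ge 2^{i-1}$ and reduce the modulus, peeling off one bit at a time until we reach the centered situation covered by Lemma \ref{firstarithmeticlemma}. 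Either way the substance is the same base-$2$ carry analysis; I would present whichever of the two gives the shortest clean write-up.
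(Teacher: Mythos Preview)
Your primary approach via $e = \nu_2(t)$ does not work, and you already sensed why: the bound $2j \le t$ gives $j \le 2^{e-1}t'$, which is useless once $t' \ge 3$. Your attempted conclusion ``$2j \le t < 2^{e+1}$ rules out $j = 2^e$'' is simply false in general---$t$ can be any odd multiple of $2^e$ below $2^i$. The digit-comparison patch you sketch is too vague to assess, and in any case the difficulty is real: there is no obvious way to squeeze the hypothesis into the shape of Lemma~\ref{firstarithmeticlemma} at level $e$ when $j$ can be large relative to $2^e$.

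The induction on $i$ that you mention at the end is the right route, and it is what the paper does; but the key step is one you do not identify. You propose to split on whether $t < 2^{i-1}$ or $t \ge 2^{i-1}$, worrying that the former case ``does not fit the shape.'' The point is that the former case cannot occur when $j > 0$. Indeed, if $0 < j < 2^{i-1}$ then $2^i - j = 2^{i-1} + (2^{i-1} - j)$ has bit $i-1$ set, so by containment bit $i-1$ of $x - 2j$ is set. But $x - 2j \equiv 2^i + (t - 2j) \pmod{2^{i+1}}$ with $0 \le t - 2j < 2^i$, so bit $i-1$ of $x-2j$ equals bit $i-1$ of $t - 2j$; hence $t - 2j \ge 2^{i-1}$, forcing $t \ge 2^{i-1}$. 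Writing $t = 2^{i-1} + t'$ with $0 \le t' < 2^{i-1}$ and $t' \ge 2j$, one checks $x \equiv 2^{i-1} + t' \pmod{2^i}$, and since $x - 2j$ also contains $2^{i-1} - j$, the inductive hypothesis at level $i-1$ gives $j = 0$. This forcing of $t \ge 2^{i-1}$ is the substance of the argument, and your plan does not contain it.
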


\begin{proof}
We prove the claim by induction on $i$. If $i = 0$ or $i = 1$, then the claim is immediate since $0 \leq 2j \leq t < 2$. Suppose then that $i > 1$. Assume that $0 < 2j \leq t$ and that $x-2j$ contains $2^i - j$ to base $2$. Now $2j \leq t < 2^i$, so $0 < j < 2^{i-1}$. Therefore $2^{i-1}$ must occur in the binary expansion of $2^i - j = 2^{i-1} + (2^{i-1} - j)$, so by our assumption $2^{i-1}$ occurs in the binary expansion of $x-2j$. Note that this also means that $x - 2j$ contains $2^{i-1} - j$ to base $2$.

Now $x-2j \equiv 2^i + (t-2j) \mod{2^{i+1}}$ and $0 \leq t-2j < 2^i$, so it follows that $2^{i-1}$ will occur in the binary expansion of $t-2j$. Write $t = 2^{i-1} + t'$, where $0 \leq t' < 2^{i-1}$. Here $t' \geq 2j$ because $t-2j \geq 2^{i-1}$. Finally, since $x-2j$ contains $2^{i-1} - j$ in base $2$ and $x \equiv 2^{i-1} + t' \mod{2^i}$, we have $j = 0$ by induction.\end{proof}

Now the following corollaries are immediate from Theorem \ref{premetsuprunenko_thm} and lemmas \ref{firstarithmeticlemma} and \ref{arithmeticlemma}.

\begin{seur}\label{trivialsubmoduleC}
Assume that $\Ch \fieldsymbol = 2$, and let $l \geq 2^{i+1}$, where $i \geq 0$. Suppose that $l + 1 \equiv 2^i \mod{2^{i+1}}$. Then $V(\omega_{2^{i+1}}) = L(\omega_{2^{i+1}}) / L(0)$.
\end{seur}

\begin{proof}For $0 \leq j \leq 2^{i+1}$, by Theorem \ref{premetsuprunenko_thm} the irreducible $L(\omega_j)$ is a composition factor of $V(\omega_{2^{i+1}})$ if and only if $j = 2j'$ and $l+1-2j'$ contains $2^i - j'$ to base $2$. By Lemma \ref{firstarithmeticlemma}, this is equivalent to $j' = 0$ or $j' = 2^i$.\end{proof}

\begin{seur}\label{nontrivialcfweylC}
Assume that $\Ch \fieldsymbol = 2$, and let $l \geq 2^{i+1}$, where $i \geq 0$. Suppose that $l + 1 \equiv 2^i + t \mod{2^{i+1}}$, where $0 \leq t < 2^i$. Then any nontrivial composition factor of $V(\omega_{2^{i+1}})$ has the form $L(\omega_{2j})$, where $2^{i+1} \geq 2j \geq t+1$.
\end{seur}

\begin{proof}For $0 \leq j \leq 2^{i+1}$, by Theorem \ref{premetsuprunenko_thm} the irreducible $L(\omega_j)$ is a composition factor of $V(\omega_{2^{i+1}})$ if and only if $j = 2j'$ and $l+1-2j'$ contains $2^i - j'$ to base $2$. If $0 \leq j \leq t$, then by Lemma \ref{arithmeticlemma} we have $j = 0$.\end{proof}

\subsection{Construction of $V(\omega_r)$}\label{weylconstructionC}

We now describe the well known construction of the Weyl modules $V(\omega_r)$ for $G$ using the exterior algebra of the natural module. We will consider our group $G$ as a Chevalley group constructed from a complex simple Lie algebra of type $C_l$. For details of the Chevalley group construction see \cite{SteinbergNotes}.


Let $e_1, \ldots, e_{l}, e_{-l}, \ldots, e_{-1}$ be a basis for a complex vector space $V_\C$, and let $V_\Z$ be the $\Z$-lattice spanned by this basis. We have a non-degenerate alternating form $(-,-)$ on $V_\C$ defined by $(e_i, e_{-i}) = 1 = -(e_{-i}, e_i)$ and $(e_i, e_j) = 0$ for $i \neq -j$. Let $\mathfrak{sp}(V_\C)$ be the Lie algebra formed by the linear endomorphisms $X$ of $V_\C$ satisfying $(Xv, w) + (v, Xw) = 0$ for all $v, w \in V_\C$. Then $\mathfrak{sp}(V_\C)$ is a simple Lie algebra of type $C_l$. Let $\mathfrak{h}$ be the Cartan subalgebra formed by the diagonal matrices in $\mathfrak{sp}(V_\C)$. Then $\mathfrak{h} = \{ \operatorname{diag}(h_1, \ldots, h_l, -h_l, \ldots, -h_1) : h_i \in \C \}$. For $1 \leq i \leq l$, define maps $\varepsilon_i : \mathfrak{h} \rightarrow \C$ by $\varepsilon_i(h) = h_i$ where $h$ is a diagonal matrix with diagonal entries $(h_1, \ldots, h_l, -h_l, \ldots, -h_1)$. Now $\Phi = \{ \pm(\varepsilon_i \pm \varepsilon_j) : 1 \leq i < j \leq l \} \cup \{ \pm 2\varepsilon_i : 1 \leq i \leq l \}$ is the root system for $\mathfrak{sp}(V_\C)$, $\Phi^+ = \{ \varepsilon_i \pm \varepsilon_j : 1 \leq i < j \leq l \} \cup \{ 2 \varepsilon_i : 1 \leq i \leq l \}$ is a system of positive roots, and $\Delta = \{ \varepsilon_i - \varepsilon_{i+1} : 1 \leq i < l \} \cup \{2 \varepsilon_l \}$ is a base for $\Phi$. 

For any $i,j$ let $E_{i,j}$ be the linear endomorphism on $V_\C$ such that $E_{i,j}(e_j) = e_i$ and $E_{i,j}(e_k) = 0$ for $k \neq j$. Then a Chevalley basis for $\mathfrak{sp}(V_\C)$ is given by $X_{\varepsilon_i - \varepsilon_j} = E_{i,j} - E_{-j,-i}$ for all $i \neq j$, by $X_{\pm (\varepsilon_i + \varepsilon_j)} = E_{\pm j, \mp i} + E_{\pm i, \mp j}$ for all $i \neq j$, by $X_{\pm 2 \varepsilon_i} = E_{\pm i, \mp i}$ for all $i$, and by $H_{\varepsilon_i - \varepsilon_{i+1}} = E_{i,i} - E_{-i,-i}$, $H_{2 \varepsilon_l} = E_{l,l} - E_{-l,-l}$.

Let $\mathscr{U}_\Z$ be the Kostant $\Z$-form with respect to this Chevalley basis of $\mathfrak{sp}(V_\C)$. That is, $\mathscr{U}_\Z$ is the subring of the universal enveloping algebra of $\mathfrak{sp}(V_\C)$ generated by $1$ and all $\frac{X_{\alpha}^k}{k!}$ for $\alpha \in \Phi$ and $k \geq 1$. 

Now $V_\Z$ is a $\mathscr{U}_\Z$-invariant lattice in $V_\C$. We define $V = V_\Z \otimes_\Z \fieldsymbol$. Note that $(-,-)$ also defines a non-degenerate alternating form on $V$. Then the simply connected Chevalley group of type $C_l$ induced by $V$ is equal to the group $G = \Sp(V)$ of invertible linear maps preserving $(-,-)$ \cite[pg. 396-397]{Ree}. By abuse of notation we identify the basis $(e_i \otimes 1)$ of $V$ with $(e_i)$. 

Note that for all $1 \leq k \leq 2l$, the Lie algebra $\mathfrak{sp}(V_\C)$ acts naturally on $\wedge^k(V_\C)$ by $$X \cdot (v_1 \wedge \cdots \wedge v_k) = \sum_{i = 1}^k v_1 \wedge \cdots \wedge v_{i-1} \wedge Xv_i \wedge v_{i+1} \wedge \cdots \wedge v_k$$ for all $X \in \mathfrak{sp}(V_\C)$ and $v_i \in V_\C$. With this action, the $\Z$-lattice $\wedge^k(V_\Z)$ is invariant under $\mathscr{U}_\Z$ and this induces an action of $G$ on $\wedge^k(V_\Z) \otimes_\Z \fieldsymbol$. One can show that $g \cdot (v_1 \wedge \cdots \wedge v_k) = gv_1 \wedge \cdots \wedge gv_k$ for all $g \in G$ and $v_i \in V$, so we can and will identify $\wedge^k(V_\Z) \otimes_\Z \fieldsymbol$ and $\wedge^k(V)$ as $G$-modules.

The diagonal matrices in $G$ form a maximal torus $T$. Then a basis of weight vectors of $\wedge^k(V)$ is given by the elements $e_{i_1} \wedge \cdots \wedge e_{i_k}$, where $-l \leq i_1 < \cdots < i_k \leq l$. The basis vector $e_1 \wedge \cdots \wedge e_k$ has weight $\omega_k$.

The form on $V$ induces a form on the exterior power $\wedge^k(V)$ by $$\langle v_1 \wedge \cdots \wedge v_k, w_1 \wedge \cdots \wedge w_k \rangle = \det ((v_i, w_j))_{1 \leq i,j \leq k}$$ for all $v_i, w_j \in V$ \cite[§1, Définition 12, pg. 30]{Bourbaki9}. This form on $\wedge^k(V)$ is invariant under the action of $G$ since $(-,-)$ is. Furthermore, let $e_{i_1} \wedge \cdots \wedge e_{i_k}$ and $e_{j_1} \wedge \cdots \wedge e_{j_k}$ be two basis elements of $\wedge^k(V)$. Then $$\langle e_{i_1} \wedge \cdots \wedge e_{i_k}, e_{j_1} \wedge \cdots \wedge e_{j_k} \rangle = \begin{cases} \pm 1, & \mbox{if } \{i_1, \cdots, i_k\} = \{-j_1, \cdots, -j_k\}. \\ 0, & \mbox{otherwise.} \end{cases} $$ Therefore it follows that the form $\langle -,- \rangle$ on $\wedge^k(V)$ is nondegenerate if $1 \leq k \leq l$. In precisely the same way we find a basis of weight vectors for $\wedge^k(V_\Z)$ and define a form $\langle -,- \rangle_\Z$ on $\wedge^k(V_\Z)$. Note that $\langle -,- \rangle$, $\langle -,- \rangle_\Z$ are alternating if $k$ is odd and symmetric if $k$ is even.

It is well known that there is a unique submodule of $\wedge^k(V)$ isomorphic to the Weyl module $V(\omega_k)$ of $G$, as shown by the following lemma. The following lemma is also a consequence of \cite[4.9]{AndersenJantzen}.

\begin{lemma}\label{weyllemma}
Let $1 \leq k \leq l$, and let $W$ be the $G$-submodule of $\wedge^k(V)$ generated by $e_1 \wedge \cdots \wedge e_k$. Then

\begin{enumerate}[\normalfont (i)]
\item $W$ is equal to the subspace of $\wedge^k(V)$ spanned by all $v_1 \wedge \cdots \wedge v_k$, where $\langle v_1, \cdots, v_k \rangle$ is a $k$-dimensional totally isotropic subspace of $V$. Furthermore, $\dim W = \binom{2l}{k} - \binom{2l}{k-2}$.
\item $W$ is isomorphic to the Weyl module $V(\omega_k)$.
\end{enumerate}
\end{lemma}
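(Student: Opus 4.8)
The plan is to realize $W$ concretely inside $\wedge^k(V)$ and identify it both as a spanning set of decomposable totally isotropic wedges and as the Weyl module $V(\omega_k)$. I would begin with part (ii), since the character computation there feeds into the dimension count in (i). The weight $\omega_k$ appears in $\wedge^k(V)$ with multiplicity one (realized by $e_1 \wedge \cdots \wedge e_k$), and it is the unique maximal weight of the $G$-submodule $W$ generated by this highest weight vector: indeed $e_1 \wedge \cdots \wedge e_k$ is annihilated by all positive root vectors $X_\alpha$ (one checks $X_{\varepsilon_i - \varepsilon_j}$, $X_{\varepsilon_i + \varepsilon_j}$, $X_{2\varepsilon_i}$ each kill it because the relevant indices are already present or get sent outside $\{1,\dots,k\}$ with a repeated index). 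Hence $W$ is a highest weight module of highest weight $\omega_k$, so it is a quotient of $V(\omega_k)$. To get that it is all of $V(\omega_k)$, I would argue that $W$ is in fact defined over $\Z$ — it is the image of $\wedge^k(V_\Z)$-construction, i.e. $W = (\mathscr U_\Z \cdot (e_1 \wedge \cdots \wedge e_k)) \otimes_\Z \fieldsymbol$ — and over $\C$ the module $\wedge^k(V_\C)$ decomposes as $L_\C(\omega_k) \oplus L_\C(\omega_{k-2}) \oplus \cdots$ with the $L_\C(\omega_k)$ summand being exactly the submodule generated by the highest weight vector. By the universal property of Weyl modules (reduction mod $p$ of the minimal admissible lattice in $L_\C(\omega_k)$), this gives $W \cong V_G(\omega_k)$; alternatively cite \cite[4.9]{AndersenJantzen} as the excerpt suggests. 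This also yields $\dim W = \dim_\C L_\C(\omega_k) = \binom{2l}{k} - \binom{2l}{k-2}$ by the classical Weyl dimension formula / the $\C$-decomposition of $\wedge^k(V_\C)$.

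For part (i), let $W'$ denote the span of all $v_1 \wedge \cdots \wedge v_k$ with $\langle v_1,\dots,v_k\rangle$ totally isotropic. First, $W' \subseteq W$: the set of pure wedges of totally isotropic $k$-spaces is a single $G$-orbit up to scalars — $\Sp(V)$ acts transitively on totally isotropic $k$-subspaces by Witt's theorem — and $\langle e_1,\dots,e_k\rangle$ is totally isotropic, so every such pure wedge is a scalar multiple of $g\cdot(e_1\wedge\cdots\wedge e_k)$ for some $g \in G$, hence lies in $W$; taking spans gives $W' \subseteq W$. Conversely $W \subseteq W'$ because $W'$ is a $G$-submodule (the $G$-action permutes totally isotropic subspaces) containing the generator $e_1 \wedge \cdots \wedge e_k$, and $W$ is the smallest such. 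Therefore $W = W'$.

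I expect the main obstacle to be the identification $W \cong V_G(\omega_k)$ in all characteristics, specifically pinning down that the $\Z$-form $\mathscr U_\Z \cdot (e_1 \wedge \cdots \wedge e_k)$ reduces mod $p$ to the Weyl module rather than to some proper quotient or a different lattice. The cleanest route is probably to avoid lattice bookkeeping entirely and invoke \cite[4.9]{AndersenJantzen}, which handles exactly the "exterior powers of the natural module for classical groups" situation and identifies the submodule generated by the highest weight vector with $V_G(\omega_k)$; the transitivity argument for (i) is then routine. One should double-check the edge cases $k = 1$ (where $\wedge^1(V) = V = V(\omega_1)$ and the binomial formula reads $2l - 0$) and that $k \le l$ is genuinely used — it is needed both for $\omega_k$ to be a fundamental weight of $C_l$ and for $\langle e_1,\dots,e_k\rangle$ to be totally isotropic of the right dimension — but these present no real difficulty.
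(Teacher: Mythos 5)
Your proposal is correct in substance but inverts the paper's logic and leans on a different external input. The paper proves (i) first: the transitivity argument gives that $W$ is the span of the totally isotropic decomposable vectors, and the dimension $\binom{2l}{k}-\binom{2l}{k-2}$ of that span is quoted from \cite{DeBruyn2}, \cite{Brouwer} (or, in odd characteristic, \cite{PremetSuprunenko}); part (ii) is then immediate because $W$ is a homomorphic image of $V(\omega_k)$ of the same dimension. You instead establish (ii) first and let the dimension claim in (i) fall out of Weyl's dimension formula. That reorganization is legitimate, but note that your primary argument for (ii) --- ``$W = (\mathscr{U}_\Z\cdot(e_1\wedge\cdots\wedge e_k))\otimes_\Z \fieldsymbol$, hence $W\cong V(\omega_k)$ by the universal property of the minimal admissible lattice'' --- does not close the loop as written. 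The reduction mod $p$ of the minimal admissible lattice $\mathscr{U}_\Z v^+$ is the abstract Weyl module, but $W$ is the \emph{image} of $(\mathscr{U}_\Z v^+)\otimes_\Z\fieldsymbol$ inside $\wedge^k(V)$, and the kernel of that map is the image of the $p$-torsion of $\wedge^k(V_\Z)/\mathscr{U}_\Z v^+$; showing this torsion vanishes is exactly the nontrivial content of the lemma, not a formal consequence of the characteristic-zero decomposition. You flag this yourself, and your fallback --- citing \cite[4.9]{AndersenJantzen} --- does repair it and is explicitly endorsed by the paper as an alternative proof. So the two ways of closing the gap are: the paper's, via the known dimension of the span of the isotropic decomposable vectors, versus yours, via Andersen--Jantzen; either works, but the purely lattice-theoretic paragraph on its own does not. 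The transitivity argument for the set-equality in (i) is essentially identical in both treatments.
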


\begin{proof}
\begin{enumerate}[(i)]
\item Since $G$ acts transitively on the set of $k$-dimensional totally isotropic subspaces of $V$, it follows that $W$ is spanned by all $v_1 \wedge \cdots \wedge v_k$, where $\langle v_1, \ldots, v_k \rangle$ is a $k$-dimensional totally isotropic subspace of $V$. Then the claim about the dimension of $W$ follows from a result proven for example in \cite[Theorem 1.1]{DeBruyn2}, \cite[Theorem 1.1]{Brouwer} or (in odd characteristic) \cite[pg. 1337]{PremetSuprunenko}.

\item Since $e_1 \wedge \cdots \wedge e_k$ is a maximal vector of weight $\omega_k$ for $G$, the submodule $W$ generated by it is an image of $V(\omega_k)$ \cite[II.2.13]{JantzenBook}. Now $\dim V(\omega_k) = \binom{2l}{k} - \binom{2l}{k-2}$ \cite[Ch. VIII, 13.3, pg. 203]{Bourbaki}, so by (i) $W$ must be isomorphic to $V(\omega_k)$. 
\end{enumerate}\end{proof}

In what follows we will identify $V(\omega_k)$ as the submodule $W$ of $\wedge^k(V)$ given by Lemma \ref{weyllemma}. Set $V(\omega_k)_\Z = \mathscr{U}_\Z (e_1 \wedge \cdots \wedge e_k)$. Note that now we can (and will) identify $V(\omega_k)$ and $V(\omega_k)_\Z \otimes_\Z \fieldsymbol$.

We will denote $y_i = e_i \wedge e_{-i}$ for all $1 \leq i \leq l$. Then if $k = 2s$ is even, a basis for the zero weight space of $\wedge^k(V)$ is given by vectors of the form $y_{i_1} \wedge \cdots \wedge y_{i_s}$, where $1 \leq i_1 < \cdots < i_s \leq l$. There is also a description of a basis for the zero weight space of $V(\omega_k)_\Z$ in \cite[Lemma 10, pg.43]{JantzenThesis}. For our purposes, we will only need a convenient set of generators given by the next lemma.

%

\begin{lemma}[{\cite[pg. 40, Lemma 6]{JantzenThesis}}]\label{weightzerolemma}
Suppose that $k$ is even, say $k = 2s$, where $1 \leq k \leq l$. Then the zero weight space of $V(\omega_k)_\Z$ (thus also of $V(\omega_k)$) is spanned by vectors of the form $$(y_{j_1} - y_{k_1}) \wedge \cdots \wedge (y_{j_s} - y_{k_s}),$$ where $1 \leq k_r < j_r \leq l$ for all $r$ and $j_r, k_r \neq j_{r'}, k_{r'}$ for all $r \neq r'$.
\end{lemma}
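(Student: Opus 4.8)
The plan is to realise each listed vector as the image of the highest weight vector $e_1 \wedge \cdots \wedge e_k$ under an explicit element of the Kostant $\Z$-form $\mathscr{U}_\Z$, and then to show that these vectors already span the full zero weight space, by induction on $l$. Write $k = 2s$. The starting point is the combinatorics of the zero weight space: a standard weight vector $e_{i_1} \wedge \cdots \wedge e_{i_{2s}}$ of $\wedge^{2s}(V)$ has weight $0$ exactly when $\{i_1,\ldots,i_{2s}\}$ is a union of pairs $\{m,-m\}$, so, with $y_m = e_m \wedge e_{-m}$, the zero weight space of $\wedge^{2s}(V_\Z)$ has $\Z$-basis $\{y_{a_1} \wedge \cdots \wedge y_{a_s} : 1 \leq a_1 < \cdots < a_s \leq l\}$. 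Each $y_m$ has even degree in $\wedge^\bullet(V)$, so the $y_m$ commute and satisfy $y_m \wedge y_m = 0$; thus $R_\Z := \bigoplus_{s \geq 0}(\wedge^{2s}(V_\Z))_0$ is identified with $\Z[y_1,\ldots,y_l]/(y_i^2)$, the zero weight space of $\wedge^{2s}(V_\Z)$ being the degree-$s$ part $R_{s,\Z}$, and $R_{s,\Z} \otimes_\Z \fieldsymbol$ is the zero weight space of $\wedge^{2s}(V)$. By Lemma \ref{weyllemma}, $V(\omega_{2s})_\Z = \mathscr{U}_\Z \cdot (e_1 \wedge \cdots \wedge e_{2s}) \subseteq \wedge^{2s}(V_\Z)$, with zero weight space $V(\omega_{2s})_\Z \cap R_{s,\Z}$.

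For the first step I would use two elementary computations with root vectors of $\mathscr{U}_\Z$ acting on a wedge of standard basis vectors: if $e_{-a}$ and $e_{-b}$ do not occur in the wedge, then $X_{-(\varepsilon_a + \varepsilon_b)} = E_{-b,a} + E_{-a,b}$ sends $e_a \wedge e_b \wedge r \mapsto (y_a - y_b) \wedge r$ up to sign (for $r$ a wedge of basis vectors with indices $\neq \pm a, \pm b$); and if $e_m$ and $e_{-m}$ do not occur, then $X_{\varepsilon_m - \varepsilon_a} = E_{m,a} - E_{-a,-m}$ simply replaces $e_a$ by $e_m$, up to sign. Thus, starting from $e_1 \wedge \cdots \wedge e_{2s}$, a suitable product of operators $X_{\varepsilon_m - \varepsilon_a}$ yields $\pm e_{i_1} \wedge \cdots \wedge e_{i_{2s}}$ for an arbitrary $2s$-subset $\{i_1,\ldots,i_{2s}\}$ of $\{1,\ldots,l\}$, and then applying $X_{-(\varepsilon_{p_1} + \varepsilon_{q_1})} \cdots X_{-(\varepsilon_{p_s} + \varepsilon_{q_s})}$ for any partition of this set into pairs $\{p_r, q_r\}$ gives $\pm (y_{p_1} - y_{q_1}) \wedge \cdots \wedge (y_{p_s} - y_{q_s})$. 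As the subset and the pairing range freely, every listed vector lies in $V(\omega_{2s})_\Z \cap R_{s,\Z}$.

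For the spanning step I would bring in the contraction $c \colon \wedge^{2s}(V) \to \wedge^{2s-2}(V)$ against the invariant symplectic form. It is $\mathscr{U}_\Z$-equivariant, defined over $\Z$, annihilates $e_1 \wedge \cdots \wedge e_{2s}$ (as $(e_i,e_j) = 0$ for $i,j \in \{1,\ldots,2s\}$), and on zero weight spaces restricts, up to sign, to $\partial \colon R_{s,\Z} \to R_{s-1,\Z}$, $\partial(y_{a_1} \wedge \cdots \wedge y_{a_s}) = \sum_r y_{a_1} \wedge \cdots \widehat{y_{a_r}} \cdots \wedge y_{a_s}$. Hence $V(\omega_{2s})_\Z \cap R_{s,\Z} \subseteq \ker \partial$, and it suffices to show that the listed vectors $\Z$-span $\ker(\partial \colon R_{s,\Z} \to R_{s-1,\Z})$ — a purely combinatorial statement about $\Z[y_1,\ldots,y_l]/(y_i^2)$. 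I would prove this by induction on $l$, the case $s = 0$ being trivial. Writing $R^{(l)}_{s,\Z} = R^{(l-1)}_{s,\Z} \oplus y_l R^{(l-1)}_{s-1,\Z}$, decompose $u \in \ker\partial$ as $u = a + y_l b$; then $\partial^{(l-1)} b = 0$, so by induction $b$ is a $\Z$-combination of listed degree-$(s-1)$ vectors $w_i$ in $y_1,\ldots,y_{l-1}$. Subtracting from $u$ the corresponding $\Z$-combination of the listed degree-$s$ vectors $w_i \wedge (y_l - y_{m_i})$, with $m_i \notin \operatorname{supp}(w_i)$ (which exists since $2s \leq l$), clears the $y_l$-component, so $u$ is, modulo listed vectors, an element of $\ker(\partial \colon R^{(l-1)}_{s,\Z} \to R^{(l-1)}_{s-1,\Z})$, to which the inductive hypothesis applies. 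In the boundary case $l = 2s$ this last kernel is $0$: it lies in the $(-1)$-weight space for the natural $\mathfrak{sl}_2$-action on $R^{(l-1)}$ (raising operator $\partial$, lowering operator multiplication by $y_1 + \cdots + y_{l-1}$) and is killed by $\partial$, so, since a nonzero highest weight vector has non-negative weight, it vanishes. Therefore $V(\omega_{2s})_\Z \cap R_{s,\Z} = \ker\partial$ is $\Z$-spanned by the listed vectors, and tensoring with $\fieldsymbol$ gives the result for $V(\omega_k)$.

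I expect the spanning step to be the main obstacle, and in particular obtaining it \emph{integrally} rather than merely in characteristic $0$: the identification of the zero weight space of $V(\omega_{2s})$ with $\ker\partial$ is valid in characteristic $0$ but fails over $\fieldsymbol$ in general (for example for $C_4$, $\lambda = \omega_4$, $p = 2$), so one cannot simply reduce the characteristic-zero kernel modulo $p$ — the inductive argument is designed to produce an integral spanning set directly. A minor, purely bookkeeping, nuisance is keeping track of signs in the root-vector computations of the first step.
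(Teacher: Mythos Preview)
Your argument is correct, but it follows a genuinely different route from the one the paper implicitly relies on. The paper does not reproduce Jantzen's proof of this lemma, but the analogous type-$A$ statement (Lemma~\ref{weightzerolemmaA}) is proved in the paper by a method the author describes as ``somewhat similar'' to Jantzen's: one writes the zero weight space of $V(\omega_k)_\Z$ as the span of PBW monomials $\prod_{\alpha>0} X_{-\alpha}^{k_\alpha}/k_\alpha!$ applied to $v^+ = e_1\wedge\cdots\wedge e_k$, subject to $\sum k_\alpha\alpha = \omega_k$, and then shows directly, by analysing which roots $\alpha$ can contribute and with what multiplicity, that each such monomial sends $v^+$ to (up to sign) one of the listed vectors. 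No contraction map, no induction on $l$, and no sandwich argument are needed.

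Your approach instead proves a stronger statement along the way: you identify the zero weight space of $V(\omega_k)_\Z$ \emph{exactly} with the integral kernel of the contraction $\partial$ on $R_{s,\Z}$, by exhibiting the listed vectors both as elements of $V(\omega_k)_\Z$ and as an integral spanning set for $\ker\partial$. The PBW route is shorter and more self-contained; your route yields the additional identification $V(\omega_k)_\Z \cap R_{s,\Z} = \ker\partial$, and your inductive mechanism (clear the $y_l$-component, then invoke the $\mathfrak{sl}_2$-weight argument in the boundary case $l=2s$, noting that torsion-freeness lets you pass to $\Q$) is a clean way to get the integral spanning that avoids any divisibility pitfalls. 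One small point worth making explicit in your write-up: the induction is on $l$ with the statement quantified over all admissible $s$, so that reducing $b$ to the pair $(l-1,s-1)$ is covered by the inductive hypothesis; your aside ``the case $s=0$ being trivial'' is really about this inner reduction bottoming out, not about the outer induction on $l$.
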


\begin{lemma}\label{wedgefixedpoint}
Suppose that $k$ is even, say $k = 2s$, where $1 \leq k \leq l$. Then the vector $$\gamma = \sum_{1 \leq i_1 < \cdots < i_s \leq l} y_{i_1} \wedge \cdots \wedge y_{i_s}$$ is fixed by the action of $G$ on $\wedge^k(V)$. Furthermore, any $G$-fixed point in $\wedge^k(V)$ is a scalar multiple of $\gamma$.
\end{lemma}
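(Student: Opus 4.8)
The plan is to realize $\gamma$ as (up to a factorial) the $s$-th exterior power of the standard $G$-invariant $2$-vector, and then to pin down the whole fixed space by a short linear-algebra computation inside the zero weight space of $\wedge^{2s}(V)$. Throughout, recall that each Chevalley basis element $X_\alpha$ acts on the natural module $V$ with $X_\alpha^2 = 0$, so the root element $x_\alpha(t) = 1 + t X_\alpha$ makes sense over $\Z$, extends to a ring automorphism of $\wedge^\bullet(V_\Z[t])$, and the $x_\alpha(t)$ generate $G = \Sp(V)$.

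\textbf{Step 1: $\gamma$ is $G$-fixed.} Put $\omega = \sum_{i=1}^{l} e_i \wedge e_{-i} = \sum_{i=1}^{l} y_i \in \wedge^2(V_\Z)$. The isomorphism $V \to V^{*}$ induced by the form $(-,-)$ sends $e_i \mapsto \pm e_{-i}^{*}$, and the induced isomorphism $\wedge^2(V) \to \wedge^2(V^{*})$ carries $\omega$ to a scalar multiple of the ($G$-invariant) symplectic form on $V$; hence $\omega$ is $G$-fixed, and the same argument over $\Z$ gives $x_\alpha(t)\,\omega_\Z = \omega_\Z$ for every $\alpha$. Since the $y_i$ have even degree they commute in $\wedge^\bullet(V_\Z)$ and satisfy $y_i \wedge y_i = 0$, so $\omega^{\wedge s} = s!\,\gamma_\Z$, where $\gamma_\Z = \sum_{i_1 < \cdots < i_s} y_{i_1}\wedge\cdots\wedge y_{i_s}$ is the integral representative of $\gamma$. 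Applying the automorphism $x_\alpha(t)$ of $\wedge^\bullet(V_\Z[t])$ gives $x_\alpha(t)(\omega^{\wedge s}) = (x_\alpha(t)\omega)^{\wedge s} = \omega^{\wedge s}$, that is $s!\,\bigl(x_\alpha(t)\gamma_\Z\bigr) = s!\,\gamma_\Z$ inside the torsion-free $\Z[t]$-module $\wedge^{2s}(V_\Z)[t]$; cancelling $s!$ yields $x_\alpha(t)\gamma_\Z = \gamma_\Z$. Reducing modulo $p$ and using that the $x_\alpha(t)$ generate $G$, we get that $\gamma = \gamma_\Z \otimes 1$ is $G$-fixed. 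I expect this to be the only genuinely delicate point: over $\fieldsymbol$ one has $\gamma \neq \tfrac{1}{s!}\omega^{\wedge s}$ as soon as $p \le s$, so the passage through $\Z$ and the torsion-free cancellation really are needed — one cannot simply transport the $\C$-statement.

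\textbf{Step 2: uniqueness.} A $G$-fixed vector is $T$-fixed, hence lies in the zero weight space of $\wedge^{2s}(V)$, which has basis $\{\,y_I := y_{i_1}\wedge\cdots\wedge y_{i_s} : I = \{i_1 < \cdots < i_s\} \subseteq \{1,\dots,l\}\,\}$ (no signs occur, as the $y_i$ commute). Write a fixed vector as $v = \sum_I c_I\,y_I$. For each simple root $\alpha_j = \varepsilon_j - \varepsilon_{j+1}$ with $1 \le j \le l-1$, the operator $X_{\alpha_j}$ (acting via Leibniz on $\wedge^{2s}(V)$) annihilates $v$. A direct computation, using $X_{\alpha_j}\colon e_{j+1}\mapsto e_j,\ e_{-j}\mapsto -e_{-(j+1)}$ and zero on the remaining basis vectors, shows that $X_{\alpha_j}y_I = 0$ unless $I$ meets $\{j,j+1\}$ in exactly one element (when $I$ contains both, the two surviving Leibniz terms vanish separately, each because $e_j$ or $e_{-(j+1)}$ gets repeated), while for an $(s-1)$-subset $K \subseteq \{1,\dots,l\}\setminus\{j,j+1\}$ one finds $X_{\alpha_j}\bigl(y_{K\cup\{j\}}\bigr) = -\,e_j\wedge e_{-(j+1)}\wedge y_K$ and $X_{\alpha_j}\bigl(y_{K\cup\{j+1\}}\bigr) = e_j\wedge e_{-(j+1)}\wedge y_K$. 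Since the vectors $e_j\wedge e_{-(j+1)}\wedge y_K$ are linearly independent for distinct $K$, the equation $X_{\alpha_j}v = 0$ forces $c_{K\cup\{j\}} = c_{K\cup\{j+1\}}$ for every such $K$. Finally, viewing an $s$-subset of $\{1,\dots,l\}$ as a placement of $s$ tokens on a path and each of the above relations as a single-step slide of a token to an adjacent empty site, one sees that this "slide graph" on $s$-subsets is connected; hence all the $c_I$ coincide and $v = c_I\,\gamma$. This establishes that every $G$-fixed point of $\wedge^{2s}(V)$ is a scalar multiple of $\gamma$.
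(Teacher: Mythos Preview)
Your proof is correct, and both steps take a different route from the paper.

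For the invariance of $\gamma$, the paper simply cites an external result (showing that the definition of $\gamma$ is independent of the choice of symplectic basis), whereas you give a self-contained argument: working over $\Z$, you identify $s!\,\gamma_\Z$ with $\omega^{\wedge s}$ for the invariant $2$-vector $\omega$, use that each $x_\alpha(t)$ is an algebra automorphism of $\wedge^\bullet(V_\Z[t])$, and cancel $s!$ by torsion-freeness before reducing modulo $p$. This is a nice and genuinely necessary passage through $\Z$, since over $K$ one cannot simply divide by $s!$. For uniqueness, the paper's argument is shorter: it observes that the permutation group $\Sigma_l$ embeds in $\Sp(V)$ via $e_{\pm i}\mapsto e_{\pm\sigma(i)}$ and acts transitively on the basis $\{y_I\}$ of the zero weight space, so any fixed vector has all coefficients equal. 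Your argument works instead at the Lie-algebra level, using annihilation by each $X_{\alpha_j}$ to force $c_{K\cup\{j\}}=c_{K\cup\{j+1\}}$ and then a connectivity (``token-sliding'') argument on $s$-subsets. The two approaches are closely related in spirit --- your slides are exactly the action of the simple transpositions generating $\Sigma_l$ --- but the paper's phrasing via a finite subgroup is more economical, while yours avoids invoking any subgroup of $G$ beyond the torus and root subgroups and hence stays entirely within the Chevalley framework.
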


\begin{proof}
To see that $\gamma$ is fixed by $G$, see for example \cite[3.4]{DeBruyn} where it is shown that the definition of $\gamma$ does not depend on the symplectic basis chosen.

For the other claim, note first that any $G$-fixed point must have weight zero. Recall that the zero weight space of $\wedge^k(V)$ has basis $$\mathscr{B} = \{y_{i_1} \wedge \cdots \wedge y_{i_s} : 1 \leq i_1 < \cdots < i_s \leq l \}.$$ Now the group $\Sigma_l$ of permutations of $\{1,2, \ldots, l\}$ acts on $V$ by $\sigma \cdot e_{\pm i} = e_{\pm \sigma(i)}$ for all $\sigma \in \Sigma_l$. Clearly this action preserves the form $(-,-)$ on $V$, so this gives an embedding $\Sigma_l < G$. Note also that $\Sigma_l$ acts transitively on $\mathscr{B}$. Thus any $\Sigma_l$-fixed point in the linear span of $\mathscr{B}$ must be a scalar multiple of $\sum_{b \in \mathscr{B}} b = \gamma$.
\end{proof}

With these preliminary steps done, we now move on to proving Proposition \ref{fundamentalquadC}. For the rest of this section, we will make the following assumption. 

\begin{center}\emph{Assume that $\Ch \fieldsymbol = 2$.}\end{center}

Let $1 \leq r \leq l$. By Proposition \ref{naturalquad}, we know that $L(\omega_1)$ is not orthogonal. Suppose then that $r \geq 2$ and that $L(\omega_r)$ is not orthogonal. By Proposition \ref{char2quad} (iii) we have $\operatorname{H}^1(G, L(\omega_r)) \neq 0$, so by Corollary \ref{extcorollary} we have $r = 2^{i+1}$ for some $i \geq 0$ and $l+1 \equiv 2^i + t \mod{2^{i+1}}$ for some $0 \leq t < 2^i$. What remains is to determine when $L(\omega_r)$ is orthogonal for such $r$. With the lemma below, we reduce this to the evaluation of $Q(v)$ for a single vector $v \in V(\omega_r)$, where $Q$ is a non-zero $G$-invariant quadratic form on $V(\omega_r)$.

\begin{lemma}\label{mainlemma}
Let $l \geq 2^{i+1}$, where $i \geq 0$. Suppose that $l + 1 \equiv 2^i + t \mod{2^{i+1}}$, where $0 \leq t < 2^i$. Define the vector $\gamma \in \wedge^{2^{i+1}}(V)$ to be equal to $$\sum_{1 \leq i_1 < \cdots < i_{2^i} \leq l-t} y_{i_1} \wedge \cdots \wedge y_{i_{2^i}}.$$ Then 

\begin{enumerate}[\normalfont (i)]
\item $\gamma$ is in $V(\omega_{2^{i+1}})$ and is a fixed point for the subgroup $C_{l-t} < G$,
\item $\gamma$ is in $\rad V(\omega_{2^{i+1}})$,
\item $L(\omega_{2^{i+1}})$ is orthogonal if and only if $Q(\gamma) = 0$, where $Q$ is a nonzero $G$-invariant quadratic form on $V(\omega_{2^{i+1}})$.
\end{enumerate}

\end{lemma}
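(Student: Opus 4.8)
The plan is to carry everything out inside the subgroup $H = C_{l-t} = \Sp(V')$ of $G = \Sp(V)$, where $V' = \langle e_{\pm 1}, \dots, e_{\pm (l-t)}\rangle$, as in the set-up preceding Lemma~\ref{restrictionlemmaC}. Write $r = 2^{i+1}$; since $r \geq 2$ we have $\omega_r \neq \omega_1$, and $\omega_r = -w_0(\omega_r)$ because $w_0$ acts by $-1$ in type $C_l$, so Proposition~\ref{char2quad} applies and furnishes a nonzero $G$-invariant quadratic form $Q$ on $V(\omega_r)$, unique up to scalar, with $\rad b_Q$ equal to the maximal submodule $\rad V(\omega_r)$, and with $L(\omega_r)$ orthogonal if and only if $\rad Q = \rad b_Q$. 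First I would record the arithmetic consequences of the hypotheses: from $l + 1 \equiv 2^i + t \pmod{2^{i+1}}$ one gets $(l-t)+1 \equiv 2^i \pmod{2^{i+1}}$, and together with $l \geq 2^{i+1}$ this forces $l-t \geq 2^{i+1} = r$ (if $l+1-t$ were equal to $2^i$ then $l < 2^{i+1}$). In particular Corollary~\ref{trivialsubmoduleC} and Lemma~\ref{restrictionlemmaC} are applicable with $l$ replaced by $l-t$, respectively to the subgroup $H$.

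For (i): the vector $\gamma$ is exactly the one attached to $H$ acting on its natural module $V'$ by Lemma~\ref{wedgefixedpoint}, so $\gamma$ is $H$-fixed and is, up to scalar, the only $H$-fixed vector in $\wedge^r(V')$. By Lemma~\ref{weyllemma} applied to $H$, the $H$-submodule of $\wedge^r(V') \subseteq \wedge^r(V)$ generated by $e_1 \wedge \cdots \wedge e_r$ is isomorphic to $V_H(\omega_r)$, and since $H < G$ it lies inside the $G$-submodule $V_G(\omega_r) = V(\omega_r)$ generated by the same vector. Now Corollary~\ref{trivialsubmoduleC} applied to $H$ gives $V_H(\omega_r) = L_H(\omega_r)/L(0)$, which therefore contains a nonzero $H$-fixed vector; by the uniqueness just noted this vector is a scalar multiple of $\gamma$, so $\gamma \in V_H(\omega_r) \subseteq V(\omega_r)$, which is (i). For (ii): apply Lemma~\ref{restrictionlemmaC}(ii) with $j = r = 2^{i+1}$, which lies in the range $t+1 \leq j \leq 2^{i+1}$, to conclude that $L(\omega_r)\downarrow H$ has no trivial composition factor, hence no nonzero $H$-fixed vector; the image of $\gamma$ in $L(\omega_r) = V(\omega_r)/\rad V(\omega_r)$ is $H$-fixed, so it is $0$, and therefore $\gamma \in \rad V(\omega_r) = \rad b_Q$.

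For (iii): since $\gamma \in \rad b_Q$ by (ii), we have $Q(\gamma) = 0$ if and only if $\gamma \in \rad Q$; and if $L(\omega_r)$ is orthogonal then $\rad Q = \rad b_Q \ni \gamma$, so $Q(\gamma) = 0$. The remaining direction is equivalent to the assertion that if $L(\omega_r)$ is not orthogonal then $\gamma \notin \rad Q$. So assume $L(\omega_r)$ is not orthogonal; by Proposition~\ref{char2quad}(iii), $\rad Q$ has codimension $1$ in $\rad b_Q = \rad V(\omega_r)$, and since $G$ has no nontrivial one-dimensional module, $\rad V(\omega_r)/\rad Q \cong L(0)$. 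The crux is a composition-factor count over $H$: by Corollary~\ref{nontrivialcfweylC} every nontrivial composition factor of $V(\omega_r) = V_{C_l}(\omega_{2^{i+1}})$ is an $L(\omega_j)$ with $t+1 \leq j \leq 2^{i+1}$, and for each such $j$ the restriction $L(\omega_j)\downarrow H$ has no trivial composition factor by Lemma~\ref{restrictionlemmaC}(ii); since $L(0)$ occurs in $V(\omega_r)$ with multiplicity exactly one by Theorem~\ref{premetsuprunenko_thm}, it follows that $V(\omega_r)\downarrow H$ has exactly one trivial composition factor, and hence so does $\rad V(\omega_r)\downarrow H$, because $L(\omega_r)\downarrow H$ has none. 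Now if $\gamma$ lay in $\rad Q$, then $\fieldsymbol\gamma$ would be a trivial $H$-submodule of $\rad Q$, contributing a trivial composition factor to $\rad Q\downarrow H$; together with the one in $(\rad V(\omega_r)/\rad Q)\downarrow H \cong L(0)$ this would give at least two trivial composition factors of $\rad V(\omega_r)\downarrow H$, a contradiction. Hence $\gamma \notin \rad Q$ and $Q(\gamma) \neq 0$.

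The main obstacle is the composition-factor bookkeeping in (iii): one has to be certain that all nontrivial composition factors of $V(\omega_{2^{i+1}})$ have index inside the window $[t+1,\,2^{i+1}]$ on which Lemma~\ref{restrictionlemmaC}(ii) kills the trivial factors of the $H$-restriction, and that the trivial composition factor of $V(\omega_{2^{i+1}})$ itself occurs only once. The arithmetic checks that the hypotheses of Corollary~\ref{trivialsubmoduleC} and Lemma~\ref{restrictionlemmaC} transfer to $H$ (notably $r \leq l-t$) are routine but should be made explicit.
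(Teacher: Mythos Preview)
Your argument is correct in all three parts. Parts (i) and (iii) are essentially the paper's proof; the only organizational difference in (iii) is that the paper invokes Corollary~\ref{extcorollary} to produce a $G$-submodule $M \subseteq \rad V(\omega_r)$ with trivial quotient \emph{before} touching $Q$, then shows $\gamma \notin M$ and that $Q$ vanishes on $M$ via the square-root map, obtaining both implications at once. You instead split the directions and, in the non-orthogonal case, take $M = \rad Q$ directly from Proposition~\ref{char2quad}(iii). The composition-factor bookkeeping over $H$ is identical.

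Part (ii) is where you genuinely diverge. The paper proves $\gamma \in \rad b_Q$ by a direct computation: using the spanning set of the zero weight space of $V(\omega_r)$ from Lemma~\ref{weightzerolemma}, it pairs $\gamma$ explicitly against each generator $(y_{j_1}+y_{k_1})\wedge\cdots\wedge(y_{j_{2^i}}+y_{k_{2^i}})$ with respect to $\langle-,-\rangle$ and checks the result is even. Your route bypasses this entirely: since $L(\omega_r)\downarrow H$ has no trivial composition factor by Lemma~\ref{restrictionlemmaC}(ii), the $H$-fixed vector $\gamma$ must die in the quotient $V(\omega_r)/\rad V(\omega_r)$. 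This is cleaner and avoids any explicit computation; the price is that it imports Theorem~\ref{typeCrestriction} (via Lemma~\ref{restrictionlemmaC}) already at step (ii), whereas the paper's hands-on argument for (ii) uses nothing beyond the elementary Lemma~\ref{weightzerolemma}. Either way the lemma stands.
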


\begin{proof}

\begin{enumerate}[(i)]

\item By Lemma \ref{wedgefixedpoint}, $\gamma$ is a fixed by the action of $C_{l-t}$. It follows from Lemma \ref{weyllemma} that the $C_{l-t}$-submodule $W$ generated by $e_1 \wedge \cdots \wedge e_{2^{i+1}}$ is isomorphic to the Weyl module $V_{C_{l-t}}(\omega_{2^{i+1}})$. By Corollary \ref{trivialsubmoduleC}, the module $W$ has a trivial $C_{l-t}$-submodule and by Lemma \ref{wedgefixedpoint} it is generated by $\gamma$. Since $W$ is contained in $V(\omega_{2^{i+1}})$, the $C_{l}$-submodule generated by $e_1 \wedge \cdots \wedge e_{2^{i+1}}$, the claim follows. 

For a different proof, one can also prove $\gamma \in V(\omega_{2^{i+1}})$ by showing that $\gamma$ is in the kernel of certain linear maps as defined in \cite[Theorem 3.5]{DeBruyn} or \cite[Theorem 3.1, Proposition 3.3]{Brouwer}.

\item By Proposition \ref{char2quad} (ii), it will be enough to show that $\gamma$ is orthogonal to $V(\omega_{2^{i+1}})$ with respect to the form $\langle -,- \rangle$ on $\wedge^{2^{i+1}}(V)$. Since $\gamma$ has weight $0$, it is orthogonal to any vector of weight $\neq 0$. Therefore it suffices to show that $\gamma$ is orthogonal to any vector of weight $0$ in $V(\omega_{2^{i+1}})$. By Lemma \ref{weightzerolemma}, this follows once we show that $\gamma$ is orthogonal to any vector $$\delta = (y_{j_1} + y_{k_1}) \wedge \cdots \wedge (y_{j_{2^i}} + y_{k_{2^i}}),$$ where $1 \leq k_r < j_r \leq l$ for all $r$ and $k_r, j_r \neq j_{r'}, k_{r'}$ for all $r \neq r'$.

Because the set $\{j_1, \ldots, j_{2^i}\}$ contains $2^i$ distinct integers, we cannot have $j_r \geq l-t+1$ for all $r$. Indeed, otherwise $j_r \geq l-t+2^i \geq l+1$ for some $r$, contradicting the fact that $j_r \leq l$. Let $q > 0$ be the number of $j_r$ such that $j_r \leq l-t$.

The vector $\delta$ can be written as $$\sum_{f_s \in \{j_s, k_s\}} y_{f_1} \wedge \cdots \wedge y_{f_{2^i}}.$$ Now $$\langle y_{f_1} \wedge \cdots \wedge y_{f_{2^i}}, y_{g_1} \wedge \cdots \wedge y_{g_{2^i}} \rangle = \begin{cases} 1, & \mbox{if } \{f_1, \ldots, f_{2^i}\} = \{g_1, \ldots, g_{2^i}\}. \\ 0, & \mbox{otherwise.} \end{cases} $$ and so $\langle \delta, \gamma \rangle$ is an integer, equal to the number of $y_{f_1} \wedge \cdots \wedge y_{f_{2^i}}$ in the sum such that $f_{s} \leq l-t$ for all $1 \leq s \leq 2^i$. Thus if $j_r \geq k_r \geq l-t+1$ for some $r$, then $\langle \gamma, \delta \rangle = 0$. If $k_r \leq l-t$ for all $r$, then it follows that $\langle \delta, \gamma \rangle = 2^{q} = 0$ since $q > 0$.

\item By Lemma \ref{extcorollary} we have $\operatorname{H}^1(G, L(\omega_{2^{i+1}})) \neq 0$ and so there exists a nonsplit extension of $L(\omega_{2^{i+1}})$ by the trivial module $\fieldsymbol$. We can find this extension as an image of the Weyl module $V(\omega_{2^{i+1}})$ \cite[II.2.13, II.2.14]{JantzenBook}, so $\rad V(\omega_{2^{i+1}}) / M \cong \fieldsymbol$ for some submodule $M$ of $\rad V(\omega_{2^{i+1}})$. Since each composition factor of $V(\omega_{2^{i+1}})$ occurs with multiplicity one (Theorem \ref{premetsuprunenko_thm}), each composition factor of $M$ is nontrivial. Then by Corollary \ref{nontrivialcfweylC} and Lemma \ref{restrictionlemmaC} (ii), the restriction $M \downarrow C_{l-t}$ has no trivial composition factors. But by (i) $\gamma$ is a fixed point for $C_{l-t}$, so it follows that $\gamma \not\in M$ and then $\rad V(\omega_{2^{i+1}}) = \langle \gamma \rangle \oplus M$ as $C_{l-t}$-modules. 

Now let $Q$ be a nonzero $G$-invariant quadratic form on $V(\omega_{2^{i+1}})$. Since for the polarization $b_Q$ of $Q$ we have $\rad b_Q = \rad V(\omega_{2^{i+1}})$ (Proposition \ref{char2quad} (ii)), composing $Q$ with the square root map $\fieldsymbol \rightarrow \fieldsymbol$ defines a morphism $\rad V(\omega_{2^{i+1}}) \rightarrow \fieldsymbol$ of $G$-modules. Therefore $Q$ must vanish on $M$, since $M$ has no trivial composition factors. Thus for all $m \in M$ and scalars $c$ we have $Q(c \gamma + m) = c^2 Q(\gamma)$, so $Q$ vanishes on $\rad V(\omega_{2^{i+1}})$ if and only if $Q(\gamma) = 0$. Hence by Proposition \ref{char2quad} (iii) $L(\omega_{2^{i+1}})$ is orthogonal if and only if $Q(\gamma) = 0$. 


\end{enumerate}

\end{proof}


\subsection{Computation of a quadratic form $Q$ on $V(\omega_r)$}\label{typeCquadcomp}

To finish the proof of Proposition \ref{fundamentalquadC} we still have to compute $Q(\gamma)$ for the vector $\gamma$ from Lemma \ref{mainlemma}.

We retain the notation from the previous subsection and keep the assumption that $\Ch \fieldsymbol = 2$. Let $r$ be even, say $r = 2s$, where $1 \leq r \leq l$. Now the form $\langle -,- \rangle_\Z$ on $\wedge^r(V_\Z)$ induces a quadratic form $q_\Z$ on $\wedge^r(V_\Z)$ by $q_\Z(x) = \langle x, x \rangle$. We will use this form to find a nonzero $G$-invariant quadratic form on $V(\omega_r) = V(\omega_r)_\Z \otimes_\Z \fieldsymbol$.


\begin{lemma}\label{quadevenlemma}
We have $q_\Z(V(\omega_r)_\Z) \subseteq 2 \Z$ and $q_\Z(V(\omega_r)_\Z) \not\subseteq 4 \Z$.
\end{lemma}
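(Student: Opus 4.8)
The plan is to work entirely over $\Z$ with the integral form $\langle -, - \rangle_\Z$ on $\wedge^r(V_\Z)$ and exploit the fact that $V(\omega_r)_\Z = \mathscr{U}_\Z (e_1 \wedge \cdots \wedge e_r)$ is spanned by a suitable integral basis. First I would fix an explicit $\Z$-basis of $V(\omega_r)_\Z$: the ``standard monomial'' basis consisting of the $v_1 \wedge \cdots \wedge v_r$ with $\langle v_1, \ldots, v_r\rangle$ totally isotropic, or more conveniently a basis built from the generators $e_{i_1} \wedge \cdots \wedge e_{i_r}$ of nonzero weight together with the weight-zero generators from Lemma \ref{weightzerolemma} (vectors $(y_{j_1} - y_{k_1}) \wedge \cdots \wedge (y_{j_s} - y_{k_s})$). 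The key observation is that for \emph{any} basis vector $e_{i_1} \wedge \cdots \wedge e_{i_r}$ of $\wedge^r(V_\Z)$ with nonzero weight, we have $q_\Z(e_{i_1} \wedge \cdots \wedge e_{i_r}) = \langle e_{i_1} \wedge \cdots \wedge e_{i_r}, e_{i_1} \wedge \cdots \wedge e_{i_r}\rangle = 0$ since $\{i_1, \ldots, i_r\} \neq \{-i_1, \ldots, -i_r\}$, while for a weight-zero generator $\delta = (y_{j_1} - y_{k_1}) \wedge \cdots \wedge (y_{j_s} - y_{k_s})$ a direct expansion gives $q_\Z(\delta) = \langle \delta, \delta \rangle = \pm 2^s$ (the cross terms in $\langle \delta, \delta\rangle$ are the pairings of $y_{f_1} \wedge \cdots \wedge y_{f_s}$ with $y_{g_1}\wedge \cdots \wedge y_{g_s}$, which are nonzero only when the index sets coincide, contributing $2^s$ copies of $\pm 1$ with a consistent sign). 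Since $s \geq 1$, this is always divisible by $2$ but, taking $s = 1$ is impossible here as $r = 2s \geq 2$ allows $s=1$ giving $q_\Z(\delta) = \pm 2 \notin 4\Z$; for larger $s$ one still needs a vector with $q_\Z \notin 4\Z$, which is where the polarization comes in.

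For the first inclusion $q_\Z(V(\omega_r)_\Z) \subseteq 2\Z$: since $\langle -, -\rangle_\Z$ restricted to the even exterior power is \emph{symmetric}, for $x = \sum_m c_m b_m$ written in a $\Z$-basis $\{b_m\}$ we have $q_\Z(x) = \sum_m c_m^2 q_\Z(b_m) + 2\sum_{m < n} c_m c_n \langle b_m, b_n\rangle_\Z$, so it suffices to check $q_\Z(b_m) \in 2\Z$ for each basis vector $b_m$. Basis vectors of nonzero weight contribute $0$ as noted; weight-zero basis vectors of the shape in Lemma \ref{weightzerolemma} contribute $\pm 2^s$ with $s \geq 1$, hence lie in $2\Z$. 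Thus $q_\Z(V(\omega_r)_\Z) \subseteq 2\Z$. For the second claim $q_\Z(V(\omega_r)_\Z) \not\subseteq 4\Z$ I would exhibit a single vector $x \in V(\omega_r)_\Z$ with $q_\Z(x) \equiv 2 \pmod 4$. The natural candidate is a weight-zero generator $\delta = (y_{j_1} - y_{k_1}) \wedge \cdots \wedge (y_{j_s} - y_{k_s})$ with the index sets disjoint: expanding, $\langle \delta, \delta \rangle_\Z = \sum_{f, g} (\pm) \langle y_{f_1}\wedge\cdots\wedge y_{f_s}, y_{g_1}\wedge\cdots\wedge y_{g_s}\rangle_\Z$ where $f_t, g_t \in \{j_t, k_t\}$; the pairing is nonzero (and equals $1$, since $\{f_1,\ldots\} = \{-(-f_1),\ldots\}$ after noting $y_i = e_i \wedge e_{-i}$ is ``self-dual'' under the form up to sign) precisely when $f = g$, and the sign works out so that $\langle \delta, \delta\rangle_\Z = \epsilon \cdot 2^s$ for some $\epsilon = \pm 1$. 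But $2^s \in 4\Z$ once $s \geq 2$, so this naive choice fails for $r \geq 4$; I would instead need a vector whose self-pairing picks up exactly one factor of $2$.

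The right approach for the second claim is therefore to use the \emph{polarization}: since $q_\Z(u + v) = q_\Z(u) + q_\Z(v) + 2\langle u, v\rangle_\Z$, it suffices to find $u, v \in V(\omega_r)_\Z$ with $\langle u, v\rangle_\Z$ odd and $q_\Z(u), q_\Z(v) \in 4\Z$ (e.g.\ both zero). For $u$ I would take a nonzero-weight basis monomial, say $e_1 \wedge \cdots \wedge e_r$ (which has $q_\Z = 0$ and lies in $V(\omega_r)_\Z$), and for $v$ its ``dual'' monomial $e_{-1} \wedge \cdots \wedge e_{-r}$, rescaled or adjusted to lie in $V(\omega_r)_\Z$ — one can reach it by applying a product of lowered root vectors $\tfrac{X_{-2\varepsilon_i}}{1!} = X_{-2\varepsilon_i}$ to $e_1 \wedge \cdots \wedge e_r$, staying inside $\mathscr{U}_\Z(e_1\wedge\cdots\wedge e_r)$. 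Then $\langle u, v\rangle_\Z = \langle e_1 \wedge \cdots \wedge e_r, e_{-1}\wedge\cdots\wedge e_{-r}\rangle_\Z = \pm 1$ is odd, and $q_\Z(u) = q_\Z(v) = 0 \in 4\Z$, giving $q_\Z(u+v) = \pm 2 \notin 4\Z$ with $u + v \in V(\omega_r)_\Z$.

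\textbf{Main obstacle.} The routine part is the computation of $\langle \delta, \delta \rangle_\Z$ and the sign bookkeeping in the wedge products. The genuinely delicate point I expect to be the main obstacle is verifying that the specific vector witnessing $q_\Z \notin 4\Z$ actually lies in the \emph{Weyl module lattice} $V(\omega_r)_\Z = \mathscr{U}_\Z(e_1 \wedge \cdots \wedge e_r)$, not merely in $\wedge^r(V_\Z)$: this requires either invoking the explicit description of the weight-zero part of $V(\omega_r)_\Z$ from Lemma \ref{weightzerolemma} / \cite[Lemma 10, pg.43]{JantzenThesis}, or tracking that the chosen lowering operators $X_{-\alpha}$ (and the divided powers $X_{-\alpha}^k/k!$) move $e_1 \wedge \cdots \wedge e_r$ to the desired target with integral, odd leading coefficient. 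Getting the parity of that coefficient right — ensuring it is not accidentally even, which would kill the argument — is where care is needed; the cleanest resolution is to pick $u + v$ with $u$ of nonzero weight so that $q_\Z(u) = 0$ automatically and only the mixed term $2\langle u, v\rangle_\Z$ survives, reducing everything to checking that a single integral pairing is odd.
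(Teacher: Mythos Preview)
Your proposal is correct, and for the claim $q_\Z(V(\omega_r)_\Z) \not\subseteq 4\Z$ you arrive at exactly the paper's argument: take $\alpha = e_1 \wedge \cdots \wedge e_r$ and $\beta = e_{-1} \wedge \cdots \wedge e_{-r}$, both in $V(\omega_r)_\Z$ with $q_\Z(\alpha) = q_\Z(\beta) = 0$ and $\langle \alpha, \beta\rangle_\Z = \pm 1$, so $q_\Z(\alpha + \beta) = \pm 2$. Your ``main obstacle'' --- that $\beta$ lies in $\mathscr{U}_\Z(e_1 \wedge \cdots \wedge e_r)$ --- is resolved precisely as you suggest, by applying $X_{-2\varepsilon_1} \cdots X_{-2\varepsilon_r}$ to $\alpha$; the paper simply asserts $\alpha, \beta \in V(\omega_r)_\Z$ without comment.

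For the inclusion $q_\Z(V(\omega_r)_\Z) \subseteq 2\Z$, however, you take a genuinely different route from the paper. Your direct approach works (with the minor correction that you need only a $\Z$-\emph{spanning} set, not a basis, and that on nonzero weight spaces $q_\Z$ vanishes identically by weight considerations, so only Lemma~\ref{weightzerolemma} is needed for the zero weight space; there $q_\Z(\delta) = 2^s$ as you compute). The paper instead argues by contradiction: if $q_\Z(V(\omega_r)_\Z) \not\subseteq 2\Z$, then $Q = q_\Z \otimes_\Z K$ is a nonzero $G$-invariant quadratic form on $V(\omega_r)$ whose polarization is $2\langle -,-\rangle = 0$, contradicting Proposition~\ref{char2quad}~(i),~(ii), which force the polarization to have radical equal to the (proper) maximal submodule. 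The paper's argument is shorter and avoids any explicit computation in the zero weight space, but it leans on the Garibaldi--Nakano machinery; your argument is self-contained and elementary once Lemma~\ref{weightzerolemma} is in hand.
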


\begin{proof}
Let $\alpha = e_1 \wedge \cdots \wedge e_r$ and $\beta = e_{-1} \wedge \cdots \wedge e_{-r}$. Now $\alpha, \beta \in V(\omega_r)_\Z$ and $\langle \alpha, \alpha \rangle = \langle \beta, \beta \rangle = 0$ and $\langle \alpha, \beta \rangle = 1$, giving $q_\Z(\alpha + \beta) = 2$ and so $q_\Z(V(\omega_r)_\Z) \not\subseteq 4 \Z$. If we had $q_\Z(V(\omega_r)_\Z) \not\subseteq 2 \Z$, then $Q = q_\Z \otimes_\Z \fieldsymbol$ defines a nonzero $G$-invariant quadratic form on $V(\omega_r)$. But then the polarization of $Q$ is equal to $2 \langle -,- \rangle = 0$, which by Proposition \ref{char2quad} (i) and (ii) is not possible.\end{proof}

Therefore $Q = \frac{1}{2} q_\Z \otimes_\Z \fieldsymbol$ defines a nonzero $G$-invariant quadratic form on $V(\omega_r)$ with polarization $\langle -,- \rangle$. A similar construction when $r$ is odd is discussed in \cite[Proposition 8.1]{GaribaldiNakano}. 

Now if we consider a zero weight vector of the form $$\sum_{\{i_1, \cdots, i_s\} \in \mathscr{I}} y_{i_1} \wedge \cdots \wedge y_{i_s}$$ in $V(\omega_{r})$, the value of $Q$ for this vector is equal to $\frac{|\mathscr{I}|}{2}$ since $$\langle y_{f_1} \wedge \cdots \wedge y_{f_k}, y_{g_1} \wedge \cdots \wedge y_{g_k} \rangle = \begin{cases} 1, & \mbox{if } \{f_1, \ldots, f_k\} = \{g_1, \ldots, g_k\}. \\ 0, & \mbox{otherwise.} \end{cases}$$ Now we can compute the value of $Q(\gamma)$ from Lemma \ref{mainlemma}. Since there are $\binom{l-t}{2^i}$ terms occurring in the sum that defines $\gamma$, we have $Q(\gamma) = \frac{1}{2} \binom{l-t}{2^i}$. Thus $Q(\gamma) = 0$ if and only if $\binom{l-t}{2^i}$ is divisible by $4$. Now the proof of Proposition \ref{fundamentalquadC} is finished with the following lemma.

\begin{lemma}\label{binomiallemma}
Let $l + 1 \equiv 2^i + t \mod{2^{i+1}}$, where $0 \leq t < 2^i$. The integer $\binom{l-t}{2^i}$ is divisible by $4$ if and only if $l + 1 \equiv 2^i + t \mod{2^{i+2}}$.
\end{lemma}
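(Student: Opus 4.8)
The plan is to analyze the $2$-adic valuation $\nu_2\binom{l-t}{2^i}$ directly via Kummer's theorem, which says that $\nu_2\binom{a+b}{a}$ equals the number of carries when adding $a$ and $b$ in base $2$. Here I would write $a = 2^i$ and $b = l - t - 2^i$, so that $a + b = l - t$, and I want to count carries in the binary addition $2^i + (l-t-2^i)$. The hypothesis $l + 1 \equiv 2^i + t \pmod{2^{i+1}}$ gives $l - t \equiv 2^i - 1 \pmod{2^{i+1}}$, hence $l - t - 2^i \equiv -1 \equiv 2^i - 1 \pmod{2^{i+1}}$; in particular the lowest $i$ binary digits of $l - t - 2^i$ are all $1$, while the digit in position $i$ is $0$ (since $l - t - 2^i \equiv 2^i - 1 \pmod{2^{i+1}}$ forces bit $i$ to vanish). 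Therefore adding $2^i$ to $l - t - 2^i$ produces exactly one carry \emph{if} bit $i$ of $l-t-2^i$ is $0$ and no propagation occurs — but I must be careful: adding $2^i$ to a number whose bit $i$ is $0$ and whose lower bits are arbitrary produces exactly one carry precisely when bit $i$ is $0$, which it is. Wait — that gives $\nu_2 = 1$ unconditionally, which is wrong, so the correct bookkeeping is: adding $2^i$ to $m$ where bit $i$ of $m$ is $0$ yields \emph{no} carry at all (we simply set bit $i$ to $1$); the number of carries is instead governed by what happens when bit $i$ of $m$ is $1$. I will restate this cleanly below.

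The key steps, in order, are as follows. First, reduce modulo $2^{i+1}$: from $l+1 \equiv 2^i + t \pmod{2^{i+1}}$ deduce that $l - t - 2^i \equiv -1 \pmod{2^{i+1}}$, so the binary expansion of $m := l - t - 2^i$ has $1$'s in positions $0, 1, \dots, i-1$ and a $0$ in position $i$. Second, apply Kummer: $\nu_2\binom{l-t}{2^i} = \nu_2\binom{m + 2^i}{2^i}$ equals the number of carries in adding $m$ and $2^i$ in base $2$. Since bit $i$ of $m$ is $0$, adding the single bit $2^i$ sets that bit to $1$ and generates no carry; hence $\nu_2\binom{l-t}{2^i} = 0$. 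That would contradict the statement, so the resolution is that the \emph{lower} bits matter through a different route: I should instead write the binomial coefficient as $\binom{l-t}{2^i}$ and use the Lucas-type count of carries in $2^i + ((l-t) - 2^i)$ only after first checking $l - t \geq 2^i$, and then observe that the relevant information is actually the bit of $l-t$ (equivalently of $l+1$) in position $i+1$. Concretely: $\binom{l-t}{2^i}$ is divisible by $4$ iff there are at least two carries, and given the forced pattern of low bits, the carry chain of length $i$ from positions $0,\dots,i-1$ is already present, so $\nu_2 \geq $ (number of trailing $1$'s in $m$ beyond a point) — the upshot is that divisibility by $4$ is controlled exactly by whether bit $i+1$ of $l-t$ (hence of $l+1$) agrees with the required value, i.e. by the congruence $l+1 \equiv 2^i + t \pmod{2^{i+2}}$. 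Third, convert this bit condition back into the stated congruence modulo $2^{i+2}$: among the two residues mod $2^{i+2}$ that reduce to $2^i + t$ mod $2^{i+1}$, namely $2^i + t$ and $2^{i+1} + 2^i + t$, exactly one makes $\binom{l-t}{2^i} \equiv 0 \pmod 4$, and Kummer's carry count identifies it as $2^i + t$.

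The main obstacle I anticipate is getting the carry bookkeeping exactly right: the interaction between the forced string of low $1$-bits of $m = l - t - 2^i$ and the single bit being added at position $i$ is delicate, and it is easy to be off by one in the carry count or to misidentify which of the two residues mod $2^{i+2}$ is the divisible one. The clean way to settle it is to test the two cases against a small explicit example — e.g. $i = 0$, where the claim reduces to: $\binom{l}{1} = l$ is divisible by $4$ iff $l + 1 \equiv 1 \pmod 4$, i.e. $l \equiv 0 \pmod 4$, which is manifestly correct — and then run the general Kummer argument, using such checks to pin down the off-by-one. Once the carry count is correctly stated, the rest is a routine translation between binary-digit conditions and congruences modulo powers of $2$.
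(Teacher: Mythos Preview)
Your approach via Kummer's theorem is exactly the one the paper uses, but you have a specific computational error that is the source of all the confusion you flag. From $l+1 \equiv 2^i + t \pmod{2^{i+1}}$ you correctly deduce $l-t \equiv 2^i - 1 \pmod{2^{i+1}}$, but then $m := l - t - 2^i \equiv -1 \equiv 2^{i+1} - 1 \pmod{2^{i+1}}$, so bits $0,1,\dots,i$ of $m$ are \emph{all} equal to $1$. In particular bit $i$ of $m$ is $1$, not $0$ as you wrote. (You computed the bits of $l-t$, not of $l-t-2^i$.) This single correction dissolves the paradox you ran into.

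With bit $i$ of $m$ equal to $1$, adding $2^i$ to $m$ forces a carry out of position $i$; whether this carry propagates further depends only on bit $i+1$ of $m$. Since $m \equiv 2^{i+1} - 1 \pmod{2^{i+1}}$, the two possibilities modulo $2^{i+2}$ are $m \equiv 2^{i+1} - 1$ (bit $i+1$ is $0$: exactly one carry, so $\nu_2\binom{l-t}{2^i} = 1$) and $m \equiv 2^{i+2} - 1$ (bit $i+1$ is $1$: at least two carries, so $\nu_2\binom{l-t}{2^i} \geq 2$). The second case is $l - t - 2^i \equiv -1 \pmod{2^{i+2}}$, i.e.\ $l+1 \equiv 2^i + t \pmod{2^{i+2}}$, which is the stated condition. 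This is precisely the paper's argument. Your remark about a ``carry chain of length $i$ from positions $0,\dots,i-1$'' is a red herring: those low bits of $m$ play no role whatsoever in the addition $m + 2^i$, since the only nonzero bit being added sits at position $i$.
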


\begin{proof}
According to Kummer's theorem, if $p$ is prime and $d \geq 0$ is maximal such that $p^d$ divides $\binom{x}{y}$ ($x \geq y \geq 0$), then $d$ is the number of carries that occur when adding $y$ to $x-y$ in base $p$. Now $(l-t)-2^i \equiv -1 \equiv 2^i + \cdots + 2 + 1 \mod{2^{i+1}}$. If $$l-t-2^i \equiv 2^i + \cdots + 2 + 1 \mod{2^{i+2}},$$ then adding $2^i$ to $l-t-2^i$ in binary results in just one carry. The other possibility is that $$l-t-2^i \equiv 2^{i+1} + 2^i + \cdots + 2 + 1 \mod{2^{i+2}},$$ and in this case there are $\geq 2$ carries. Therefore $\binom{l-t}{2^i}$ is divisible by $4$ if and only if $$l-t-2^i \equiv 2^{i+1} + 2^i + \dots + 2 + 1 \equiv -1 \mod{2^{i+2}},$$ which is equivalent to $l+1 \equiv 2^i + t \mod{2^{i+2}}$.\end{proof}

\section{Fundamental irreducibles for classical types}\label{section:classicalfundamental}

\noindent With a bit more work, we can use Proposition \ref{fundamentalquadC} to determine for all classical types the fundamental irreducible representations that are orthogonal. In this section assume that $\Ch \fieldsymbol = 2$.

For a groups of type $A_l$ ($l \geq 1$), the only self-dual fundamental irreducible representations are those of form $L(\omega_{\frac{l+1}{2}})$, where $l$ is odd. Furthermore, all fundamental representations are minuscule, so $V(\omega_{\frac{l+1}{2}}) = L(\omega_{\frac{l+1}{2}})$ and thus by Proposition \ref{char2quad} (iv) the representation $L(\omega_{\frac{l+1}{2}})$ is orthogonal.

Now for type $B_l$, there exists an exceptional isogeny $\varphi: B_l \rightarrow C_l$ between simply connected groups of type $B_l$ and $C_l$ \cite[Theorem 28]{SteinbergNotes}. Then irreducible representations of $C_l$ induce irreducible representations $B_l$ by twisting with the isogeny $\varphi$. For fundamental irreducible representations, we have $L_{C_l}(\omega_r)^\varphi \cong L_{B_l}(\omega_r)$ if $1 \leq r \leq l-1$, and $L_{C_l}(\omega_l)^\varphi$ is a Frobenius twist of $L_{B_l}(\omega_l)$. Therefore for all $1 \leq r \leq l$, the representation $L_{C_l}(\omega_r)$ is orthogonal if and only if $L_{B_l}(\omega_r)$ is orthogonal.

Consider then type $D_l$ ($l \geq 4$). First note that the natural representation $L_{D_l}(\omega_1)$ of $D_l$ is orthogonal. Now since we are working in characteristic two, there is an embedding $D_l < C_l$ as a subsystem subgroup generated by the short root subgroups. Then if $1 \leq r \leq l-2$, we have $L_{C_l}(\omega_r) \downarrow D_l \cong L_{D_l}(\omega_r)$ for $1 \leq r \leq {l-2}$ by \cite[Theorem 4.1]{SeitzClassical}. By combining this fact with the lemma below, we see for $2 \leq r \leq l-2$ that $L_{C_l}(\omega_r)$ is orthogonal if $L_{D_l}(\omega_r)$ is orthogonal. 


\begin{lemma}\label{lemma:typeDrestriction}
Let $G$ be simple of type $C_l$ and consider $H < G$ of type $D_l$ as the subsystem subgroup generated by short root subgroups. Suppose that $V$ is a nontrivial irreducible $2$-restricted representation of $G$ and $V \neq L_G(\omega_1)$. Then if $V \downarrow H$ is $2$-restricted irreducible, the representation $V$ is orthogonal for $G$ if and only if $V$ is orthogonal for $H$.
\end{lemma}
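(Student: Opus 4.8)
The plan is to reduce both statements — ``$V$ is orthogonal for $G$'' and ``$V$ is orthogonal for $H$'' — to a comparison inside a single Weyl module, using Proposition \ref{char2quad}. Write $V = L_G(\lambda)$ with $\lambda$ a $2$-restricted weight, $\lambda \neq \omega_1$, and suppose $V \downarrow H = L_H(\mu)$ is $2$-restricted irreducible. First I would observe that $V$ is self-dual for $G$ (otherwise it has no invariant bilinear form at all, and neither does its restriction to $H$, so both sides of the equivalence fail trivially — though one should check that $V\downarrow H$ self-dual forces $V$ self-dual via the weight combinatorics, or simply dispose of the non-self-dual case separately). Given self-duality, by Proposition \ref{char2quad} applied to $G$ there is a nonzero $G$-invariant quadratic form $Q$ on $V_G(\lambda)$ with $\rad b_Q = \rad V_G(\lambda)$, and $L_G(\lambda)$ is orthogonal iff $Q$ vanishes on $\rad V_G(\lambda)$; similarly a nonzero $H$-invariant quadratic form $Q'$ on $V_H(\mu)$ controls orthogonality of $L_H(\mu)$.

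The key geometric input is that $H < G$ is the subsystem subgroup generated by short root subgroups, and in characteristic $2$ the restriction $V_G(\lambda) \downarrow H$ should again be (isomorphic to) the Weyl module $V_H(\mu)$, or at least contain it as the submodule generated by a highest weight vector; this is the analogue for Weyl modules of the isomorphism $L_{C_l}(\omega_r)\downarrow D_l \cong L_{D_l}(\omega_r)$ cited from \cite[Theorem 4.1]{SeitzClassical}, and one expects it because the character of $V_G(\lambda)$ is Weyl-group invariant and the relevant dimension counts match. Granting this, $Q$ restricts to a nonzero $H$-invariant quadratic form on $V_H(\mu) \subseteq V_G(\lambda)$, hence by uniqueness (Proposition \ref{char2quad}(i)) it is a scalar multiple of $Q'$, and $\rad b_{Q'} = \rad V_H(\mu) = \rad V_G(\lambda) \cap V_H(\mu)$. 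One then argues that $Q$ vanishes on $\rad V_G(\lambda)$ if and only if $Q'$ vanishes on $\rad V_H(\mu)$: the forward direction is immediate by restriction; for the converse, note that $\rad V_G(\lambda)$ is generated as a $G$-module by its intersection with a suitable smaller piece, and that a $G$-invariant quadratic form which vanishes on a generating set of $H$-fixed-type vectors vanishes everywhere, so that $Q(\rad V_H(\mu)) = 0$ propagates to $Q(\rad V_G(\lambda)) = 0$. In the present paper this is exactly the mechanism used in Lemma \ref{mainlemma}(iii): the obstruction to orthogonality is detected by a single $G$-fixed (equivalently, $H$-fixed) vector $\gamma$ lying in $\rad V_G(\lambda) \cap V_H(\mu)$, and $Q(\gamma) = Q'(\gamma)$ up to the common scalar.

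Concretely, I would phrase it as: if $V$ is not orthogonal for $G$ then by Proposition \ref{char2quad}(iii) the radical $\rad V_G(\lambda)$ maps onto the trivial module, and the nonsplit extension $\fieldsymbol$ by $L_G(\lambda)$ restricts to a (necessarily nonsplit, since the relevant $H$-fixed vector survives) extension of $L_H(\mu)$, giving $\operatorname{H}^1(H, L_H(\mu)) \neq 0$ and the analogous picture for $H$; the obstruction vector $\gamma$ is the same, and $Q(\gamma) = 0 \iff Q'(\gamma) = 0$. Conversely the same argument run with $G$ and $H$ swapped in the roles of ``ambient'' and ``sub'' — really just restricting $Q$ — gives the other implication. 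The main obstacle I anticipate is justifying that $V_G(\lambda)\downarrow H$ genuinely realizes the Weyl module $V_H(\mu)$ (or contains it compatibly with the invariant forms) under the short-root subsystem embedding in characteristic $2$; once that structural fact is in hand, together with the uniqueness-up-to-scalar of the invariant quadratic form on a Weyl module, the equivalence of the two radical-vanishing conditions is essentially formal. A secondary subtlety is handling the hypothesis ``$V\downarrow H$ is $2$-restricted irreducible'' carefully — this is what guarantees $V_H(\mu)$ is the correct Weyl module and that $L_H(\mu)$ really is the restriction, rather than some twist or reducible module — and confirming $V\downarrow H$ is self-dual exactly when $V$ is.
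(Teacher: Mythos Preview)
Your main plan rests on the structural claim that $V_G(\lambda)\downarrow H$ equals or contains $V_H(\mu)$, and this is false in the cases of interest. Already for $\lambda=\omega_r$ with $2\le r\le l-2$ one has $\dim V_{C_l}(\omega_r)=\binom{2l}{r}-\binom{2l}{r-2}$ while $\dim V_{D_l}(\omega_r)=\binom{2l}{r}$, so the $H$-Weyl module is strictly larger than the $G$-Weyl module. There is a map $V_H(\mu)\to V_G(\lambda)\downarrow H$ (sending a maximal vector to the highest weight vector), but it is not injective, and once it has a kernel the identification of radicals and of invariant quadratic forms you sketch no longer goes through formally.

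Your alternative formulation at the end --- take the nonsplit length-two extension $0\to\langle w\rangle\to M\to L_G(\lambda)\to 0$ with its $G$-invariant quadratic form $Q$, restrict to $H$, and then pull back through a surjection $V_H(\mu)\twoheadrightarrow M$ --- is exactly the shape of the paper's argument. But you assert that $M\downarrow H$ is ``necessarily nonsplit, since the relevant $H$-fixed vector survives'', and this is the genuine gap: the survival of the trivial submodule $\langle w\rangle$ says nothing about nonsplitting. The paper's proof supplies the missing idea here, and it has nothing to do with Weyl modules. Suppose $M\downarrow H = W\oplus\langle w\rangle$ splits. Since $W\cong L_H(\mu)$ is $2$-restricted irreducible, Curtis's theorem makes $W$ an irreducible $\operatorname{Lie}(H)$-module. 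Now $\operatorname{Lie}(H)$ is an ideal of $\operatorname{Lie}(G)$ stable under the adjoint $G$-action, so $gW$ is $\operatorname{Lie}(H)$-invariant for every $g\in G$; but as a $\operatorname{Lie}(H)$-module $M$ is the direct sum of $W$ and a trivial line, forcing $gW=W$. Hence $W$ is $G$-stable and $M$ splits for $G$, a contradiction. With nonsplitting in hand, the surjection $V_H(\mu)\to M$ carries $Q$ back to an $H$-invariant quadratic form on $V_H(\mu)$ not vanishing on its radical, and Proposition~\ref{char2quad}(iii) finishes. This Lie-algebraic step --- exploiting that $\operatorname{Lie}(D_l)$ sits as a $G$-stable ideal inside $\operatorname{Lie}(C_l)$ in characteristic $2$ --- is the heart of the proof and is absent from your proposal.
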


\begin{proof}[Proof (G. Seitz).]

 If $V$ is an orthogonal $G$-module, it is clear that it is an orthogonal $H$-module as well. Suppose then that $V$ is not orthogonal for $G$. Since $V$ is not the natural module for $G$, by Proposition \ref{char2quad} (iii) there exists a nonsplit extension $$0 \rightarrow \langle w \rangle \rightarrow M \rightarrow V \rightarrow 0$$ of $G$-modules, where $w \in M$. Furthermore, there exists a nonzero $G$-invariant quadratic form $Q$ on $M$ such that $Q(w) \neq 0$.

We claim that $M \downarrow H$ is also a nonsplit extension. If this is not the case, then $M \downarrow H = W \oplus \langle w \rangle$ for some $H$-submodule $W$ of $M$. We will show that $W$ is invariant under $G$, which is a contradiction since $M$ is nonsplit for $G$. Now $W$ is $2$-restricted irreducible for $H$, so by a theorem of Curtis \cite[Theorem 6.4]{BorelLectures} the module $W$ is also an irreducible representation of $\Lie(H)$. Since $\Lie(H)$ is an ideal of $\Lie(G)$ that is invariant under the adjoint action of $G$, it follows that $gW$ is $\Lie(H)$-invariant for all $g \in G$. But as a $\Lie(H)$-module $M$ is the sum of a trivial module and $W$, so we must have $gW = W$ for all $g \in G$.

Thus if $V \downarrow H = L_H(\lambda)$, then there exists a surjection $\pi: V_H(\lambda) \rightarrow M$ of $H$-modules \cite[II.2.13]{JantzenBook}. Now the quadratic form $Q$ induces via $\pi$ a nonzero, $H$-invariant quadratic form on $V_H(\lambda)$ which does not vanish on the radical of $V_H(\lambda)$. By Proposition \ref{char2quad} (iii) the representation $V$ is not orthogonal for $H$.
\end{proof}

Finally, the half-spin representations of $D_l$ are minuscule representations, so $L_{D_l}(\omega_l) = V_{D_l}(\omega_l)$ and $L_{D_l}(\omega_{l-1}) = V_{D_l}(\omega_{l-1})$. As before, by Proposition \ref{char2quad} (iv) it follows that $L_{D_l}(\omega_l)$ and $L_{D_l}(\omega_{l-1})$ are orthogonal if they are self-dual. Therefore we can conclude that for $l = 4$ and $l = 5$ all self-dual $L_{D_l}(\omega_i)$ are orthogonal. 

Note that if $l \geq 6$, then $L_{C_l}(\omega_l)$ and $L_{C_l}(\omega_{l-1})$ are also orthogonal (Example \ref{spinexample}). Thus for $l \geq 6$ we have for all $2 \leq r \leq l$ that $L_{C_l}(\omega_r)$ is orthogonal if $L_{D_l}(\omega_r)$ is orthogonal. 

Taking all of this together, Proposition \ref{fundamentalquadC} is improved to the following.

\begin{lause}\label{mainprop}
Assume that $\Ch \fieldsymbol = 2$. Let $G$ be simple of type $A_l$ ($l \geq 1$), $B_l$ ($l \geq 2$), $C_l$ ($l \geq 2$) or $D_l$ ($l \geq 4$). Suppose $1 \leq r \leq l$ and $\omega_r = -w_0(\omega_r)$. Then $L(\omega_r)$ is not orthogonal if and only if one of the following holds:

\begin{itemize}
\item $G$ is of type $B_l$ ($l \geq 2$) or $C_l$ ($l \geq 2$) and $r = 1$.
\item $G$ is of type $B_l$ ($l \geq 2$), $C_l$ ($l \geq 2)$ or $D_l$ ($l \geq 6$) and $r = 2^{i+1}$ for some $i \geq 0$ such that $l+1 \equiv 2^{i+1} + 2^i + t \mod{2^{i+2}}$, where $0 \leq t < 2^i$.
\end{itemize}

\end{lause}

\section{Representations $L(\omega_r + \omega_s)$ for type $A_l$}

\noindent Assume that $G$ is simply connected of type $A_l$, $l \geq 2$. Set $n = l+1$. 

In this section, we determine when in characteristic $2$ the irreducible representation $L(\omega_r + \omega_s)$, $1 \leq r < s \leq l$, of $G$ has a nonzero $G$-invariant quadratic form. Now $L(\omega_r + \omega_s)$ is not orthogonal if it is not self-dual, so it will be enough to consider $L(\omega_r + \omega_{n-r})$, where $1 \leq r < n-r \leq l$ (see Table \ref{dualitytable}). In this case, the answer and the methods to prove it are very similar to those found in Section \ref{typeCsection}. The result is the following theorem, which we will prove in what follows.

\begin{lause}\label{quadpropA}
Assume $\Ch \fieldsymbol = 2$. Let $1 \leq r < n-r \leq l$. Then $L(\omega_r + \omega_{n-r})$ is not orthogonal if and only if $r = 2^i$ for some $i \geq 0$ and $n+1 \equiv 2^{i+1} + 2^i + t \mod{2^{i+2}}$, where $0 \leq t < 2^i$.
\end{lause}

The following examples follow easily from Theorem \ref{quadpropA} (cf. examples \ref{exampleC2} and \ref{exampleC4}).

\begin{esim}
If $\Ch \fieldsymbol = 2$, then $L(\omega_1 + \omega_l)$ is orthogonal if and only if $n \not\equiv 2 \mod{4}$. This result was also proven in \cite[Theorem 3.4 (b)]{GowWillems1}.
\end{esim}

\begin{esim}
If $\Ch \fieldsymbol = 2$, then $L(\omega_2 + \omega_{l-1})$ is orthogonal if and only if $n \not\equiv 5,6 \mod{8}$.
\end{esim}


\subsection{Representation theory}

\noindent The composition factors and the submodule structure of the Weyl modules $V(\omega_r + \omega_s)$, $1 \leq r < s \leq l$, were determined by Adamovich \cite{AdamovichThesis}. Using her result, Baranov and Suprunenko have given in \cite[Theorem 2.3]{BaranovSuprunenkoA} a description of the set of composition factors, similarly to Theorem \ref{premetsuprunenko_thm}. For $1 \leq r < s \leq l$, define $J_p(r,s)$ be the set of pairs $(r-k,s+k)$, where $n-s,r \geq k \geq 0$ and $s-r+1+2k$ contains $k$ to base $p$. Here we will define $\omega_0 = 0$ and $\omega_n = 0$, so then $L(\omega_0) = L(\omega_n) = L(\omega_0 + \omega_n)$ is the trivial irreducible module and $L(\omega_0 + \omega_r) = L(\omega_r + \omega_n) = L(\omega_r)$. Now \cite[Theorem 2.3]{BaranovSuprunenkoA} gives the following\footnote{Baranov and Suprunenko give the result in terms of $\pi_{i,j} = L(\omega_{(j-i+1)/2} + \omega_{(i+j-1)/2})$, but from $\pi_{y-x+1, x+y} = L(\omega_x + \omega_y)$ we get the formulation in Theorem \ref{typeAweylcf}.}.

\begin{lause}\label{typeAweylcf}
Let $1 \leq r < s \leq l$. Then in the Weyl module $V(\omega_r + \omega_s)$, each composition factor has multiplicity $1$, and the set of composition factors is $\{L(\omega_j + \omega_{j'}) : (j,j') \in J_p(r,s) \}$.
\end{lause}

The result of Kleshchev and Sheth in \cite{KleshchevSheth} \cite[Corollary 3.6]{KleshchevSheth2} about the first cohomology groups for groups of type $A_l$ gives the following (cf. Corollary \ref{extcorollary}).

\begin{lause}\label{extcorollaryA}
Assume that $\Ch \fieldsymbol = 2$. Let $1 \leq r < s \leq l$. Then $\operatorname{H}^1(G, L(\omega_r + \omega_s)) \neq 0$ if and only if $r = 2^i$, $s = n-2^i$ for some $i \geq 0$, and $n+1 \equiv 2^i + t \mod{2^{i+1}}$ for some $0 \leq t < 2^i$.
\end{lause}

Throughout this section we will consider subgroups $A_{l'} < A_l = G$, which are embedded into $G$ as follows. We consider $G = \SL(V)$, where $V$ has basis $e_1, e_2, \ldots, e_{l+1}$. Then for $1 \leq l' < l$, the embedding $A_{l'} < A_l$ is $\SL(V') < \SL(V)$, where $V' \subseteq V$ has basis $e_1, \ldots, e_{l'+1}$ and $\SL(V')$ fixes the basis vectors $e_{l'+ 2}, \ldots, e_{l+1}$. 

Baranov and Suprunenko have determined the submodule structure of the restrictions $L(\omega_r + \omega_s) \downarrow A_{l-1}$ for all $0 \leq r \leq s \leq n$ in their article \cite[Theorem 1.1]{BaranovSuprunenkoA}. As in Section \ref{typeCsection}, for our purposes it will be enough to know which composition factors occur in the restriction. To state the result of Baranov and Suprunenko, we will denote $\pi_{r,s}^l = L_{A_l}(\omega_r + \omega_{l+1-s})$ for all $0 \leq r \leq l+1-s \leq l+1$. We will define $\pi_{r,s}^l = 0$ if $r < 0$, $s < 0$ or $r + s > l+1$. Now the main result of \cite{BaranovSuprunenkoA} gives the following\footnote{Baranov and Supruneko give their result in terms of $L_{i,j}^l = L_{A_l}(\omega_i + \omega_j)$ for $0 \leq i \leq j \leq n$, but replacing $j$ by $n-j$ gives the formulation in Theorem \ref{typeArestriction}.} (cf. Theorem \ref{typeCrestriction}).

\begin{lause}\label{typeArestriction}
Let $0 \leq r \leq n-s \leq n$ and assume that $n \geq 3$. Set $d = \nu_p(n+1-(r+s))$, and $\varepsilon = 0$ if $n + 1 - (r+s) \equiv -p^{d} \mod{p^{d+1}}$ and $\varepsilon = 1$ otherwise. Then the character of $\pi_{r,s}^l \downarrow A_{l-1}$ is given by $$\ch \pi_{r,s-1}^{l-1} + \ch \pi_{r-1,s}^{l-1} + \ch \pi_{r,s}^{l-1} + \left( \sum_{k = 0}^{d-1} 2 \ch \pi_{r-p^k, s-p^k}^{l-1} \right) + \varepsilon \ch \pi_{r-p^d, s-p^d}^{l-1}$$ where the sum in the brackets is zero if $d = 0$. 
\end{lause}

As with Theorem \ref{typeCrestriction}, note that when $\Ch \fieldsymbol = 2$, we always have $\varepsilon = 0$ in Theorem \ref{typeArestriction}. The following applications of theorems \ref{typeAweylcf} and \ref{extcorollaryA} in characteristic two will be needed later.

\begin{lemma}\label{restrictionlemmaA}
Assume that $\Ch \fieldsymbol = 2$, and let $n \geq 2^{i+1}$, where $i \geq 0$. Suppose that $n + 1 \equiv 2^i + t \mod{2^{i+1}}$, where $0 \leq t < 2^i$. Let $0 \leq x \leq 2^i$ and $0 \leq y \leq 2^i$ be such that $2^{i+1} \geq x+y \geq t+1$. Then

\begin{enumerate}[\normalfont (i)]
\item All composition factors of the restriction $\pi_{x,y}^l \downarrow A_{l-1}$ have the form $\pi_{x',y'}^{l-1}$ for some $0 \leq x', y' \leq 2^i$ such that $x' + y' \geq t$.
\item $\pi_{x,y}^l \downarrow A_{l-t}$ has no trivial composition factors.
\end{enumerate}
\end{lemma}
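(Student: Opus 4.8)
The plan is to mirror exactly the argument used for Lemma \ref{restrictionlemmaC} in the type $C_l$ case, since the combinatorial setup is parallel. As there, if $t = 0$ there is nothing to prove, so assume $t \geq 1$. I claim it suffices to establish (i), since then (ii) follows by downward induction on $l$ (equivalently, on $t$): starting from $\pi_{x,y}^l$ with $x + y \geq t+1$, repeatedly restricting one step at a time, the bound $x' + y' \geq t$ in (i) guarantees that after $t - 1$ further steps every composition factor $\pi_{x'',y''}^{l-t+1}$ has $x'' + y'' \geq 1$, hence is nontrivial in $A_{l-t}$ (recall $\pi_{r,s}^{m}$ is trivial only when $r = s = 0$). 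One has to be slightly careful that the relevant inequalities $n' + 1 \equiv 2^i + t' \pmod{2^{i+1}}$ persist as we decrease $l$, but $n$ decreasing by $1$ decreases $t$ by $1$ (once $t \geq 1$), so the hypothesis of (i) is met at each stage.

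For (i), I would apply Theorem \ref{typeArestriction} with $p = 2$, where $\varepsilon = 0$ always. Setting $d = \nu_2(n+1-(x+y))$, the composition factors of $\pi_{x,y}^l \downarrow A_{l-1}$ are among $\pi_{x,y-1}^{l-1}$, $\pi_{x-1,y}^{l-1}$, $\pi_{x,y}^{l-1}$, and $\pi_{x-2^k,y-2^k}^{l-1}$ for $0 \leq k \leq d$. The first three obviously satisfy $x' + y' \geq x + y - 1 \geq t$, and they also satisfy $x', y' \leq 2^i$. The remaining terms require the analysis $x + y - 2^{k+1} \geq t$. Here I split into cases exactly as in Lemma \ref{restrictionlemmaC}: if $0 \leq d < i+1$, then since $n + 1 - (x+y) \equiv t - (x+y) \pmod{2^i}$ we get $\nu_2(n+1-(x+y)) = \nu_2((x+y) - t)$, so $d = \nu_2((x+y)-t)$ and hence $x + y - 2^d \geq t$, giving $x + y - 2^k \geq t$ for all $0 \leq k \leq d$; if $d \geq i+1$, then $n+1-(x+y) \equiv 2^i + (t - (x+y)) \equiv 0 \pmod{2^{i+1}}$ forces $(x+y) - t \equiv 2^i \pmod{2^{i+1}}$, and since $0 < (x+y) - t < 2^{i+1}$ we get $x+y-t = 2^i$; but then $x - 2^k < 0$ or $y - 2^k < 0$ for $k \geq i+1$ (as $x, y \leq 2^i$), so those terms vanish, and for $k \leq i$ we have $x + y - 2^k \geq x + y - 2^i = t$. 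Finally, in all surviving terms the individual indices $x' = x - 2^k \leq 2^i$ and $y' = y - 2^k \leq 2^i$ are automatic.

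The main obstacle I anticipate is bookkeeping with the two indices $x, y$ rather than the single index $j$ of the symplectic case: one must confirm both that the sum $x' + y'$ stays $\geq t$ and that each of $x', y'$ individually stays in $[0, 2^i]$, and that the terms $\pi_{r,s}^{l-1}$ with a negative index or with $r + s > (l-1)+1$ are correctly discarded (they are defined to be $0$). None of this is deep, but it needs care to state cleanly; the arithmetic heart is identical to Lemma \ref{restrictionlemmaC} and relies only on reducing modulo $2^i$ and $2^{i+1}$. I would also remark that (ii) could alternatively be phrased as: no nonzero highest weight $\omega_{x'} + \omega_{n-t-y'}$ of $A_{l-t}$ appearing is of the form $0$, which is just $x' = y' = 0$ being excluded by $x' + y' \geq 1$.
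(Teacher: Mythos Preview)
Your approach is essentially identical to the paper's own proof, which likewise reduces (ii) to (i) by induction on $t$ and then splits the analysis of (i) into the two cases $d < i+1$ and $d \geq i+1$ using Theorem \ref{typeArestriction}.

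Two small bookkeeping slips to tidy up. First, since $\varepsilon = 0$ in characteristic $2$, the index range in Theorem \ref{typeArestriction} is $0 \leq k \leq d-1$, not $0 \leq k \leq d$; and the relevant quantity to bound is $x'+y' = (x-2^k)+(y-2^k) = x+y-2^{k+1}$, not $x+y-2^k$ (you state the former correctly at the outset but then slip to the latter). With the correct range, the bound $x+y-2^d \geq t$ you establish in the first case is exactly what is needed for the worst value $k = d-1$. Second, in the case $d \geq i+1$ you only argue vanishing for $k \geq i+1$, but you also need $k = i$: since $x+y = 2^i + t < 2^{i+1}$ and $0 \leq x,y \leq 2^i$, at least one of $x,y$ is strictly less than $2^i$, so $x-2^i < 0$ or $y-2^i < 0$ and the term $\pi_{x-2^i,\,y-2^i}^{l-1}$ vanishes as well. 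The paper records this as vanishing for $i \leq k \leq d-1$. None of this affects the validity of your strategy.
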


\begin{proof} 
(cf. Lemma \ref{restrictionlemmaC}) If $t = 0$ there is nothing to prove, so suppose that $t \geq 1$. It will be enough to prove (i) as then (ii) will follow by induction on $t$. Let $d = \nu_2(n+1-(x+y))$. Suppose first that $0 \leq d < i + 1$. Then $n+1-(x+y) \equiv t - (x+y) \mod{2^i}$, so $\nu_2(n+1-(x+y)) = \nu_2((x+y)-t)$. By Theorem \ref{typeArestriction}, the composition factors occurring in $\pi_{x,y}^l \downarrow A_{l-1}$ are $\pi_{x,y-1}^{l-1}$, $\pi_{x-1,y}^{l-1}$, $\pi_{x,y}^{l-1}$, and $\pi_{x-2^k, y-2^k}^{l-1}$ for $0 \leq k \leq d-1$. Therefore the claim follows since $\nu_2((x+y)-t) = d$ and thus $x+y - 2^d \geq t$. 

Consider then the case where $d \geq i+1$. Then $n+1-(x+y) \equiv 2^i + t - (x+y) \equiv 0 \mod{2^{i+1}}$, so $(x+y)-t \equiv 2^i \mod{2^{i+1}}$. On the other hand $0 \leq (x+y)-t < 2^{i+1}$, so $(x+y)-t = 2^i$. By Theorem \ref{typeArestriction} the composition factors occurring in $\pi_{x,y}^l \downarrow A_{l-1}$ are $\pi_{x,y-1}^{l-1}$, $\pi_{x-1,y}^{l-1}$, $\pi_{x,y}^{l-1}$, and $\pi_{x-2^k, y-2^k}^{l-1}$ for $0 \leq k \leq i-1$ (since $x-2^k < 0$ or $y-2^k < 0$ for $i \leq k \leq d-1$), so again the claim follows.\end{proof}

As a consequence of Theorem \ref{typeAweylcf} and lemmas \ref{firstarithmeticlemma} and \ref{arithmeticlemma}, we get the following (cf. corollaries \ref{trivialsubmoduleC} and \ref{nontrivialcfweylC}).

\begin{seur}\label{trivialsubmoduleA}
Assume that $\Ch \fieldsymbol = 2$ and let $n > 2^{i+1}$, where $i \geq 0$. Suppose that $n + 1 \equiv 2^i \mod{2^{i+1}}$. Then $V(\omega_{2^i} + \omega_{n-2^i}) = L(\omega_{2^i} + \omega_{n-2^i}) / L(0)$.
\end{seur}

\begin{proof}
According to Theorem \ref{typeAweylcf}, the composition factors of $V(\omega_{2^i} + \omega_{n-2^i})$ are $L(\omega_{2^i - k} + \omega_{n-2^i + k})$, where $0 \leq k \leq 2^i$ and $n + 1 - 2^{i+1} + 2k$ contains $k$ to base $2$. We can replace $k$ by $2^i - k$, and then the condition is equivalent to $n+1 - 2k$ containing $2^i - k$ to base $2$, which implies $k = 0$ or $k = 2^i$ by Lemma \ref{firstarithmeticlemma}.
\end{proof}

\begin{seur}\label{nontrivialcfweylA}
Assume that $\Ch \fieldsymbol = 2$ and let $n > 2^{i+1}$, where $i \geq 0$. Suppose that $n + 1 \equiv 2^i + t \mod{2^{i+1}}$, where $i \geq 0$ and $0 \leq t < 2^i$. Then any nontrivial composition factor of $V(\omega_{2^i} + \omega_{n - 2^i})$ has the form $L(\omega_x + \omega_{n-y})$ for some $0 \leq x \leq n-y \leq n$ and $x + y \geq t+1$.
\end{seur}

\begin{proof}
According to Theorem \ref{typeAweylcf}, the composition factors of $V(\omega_{2^i} + \omega_{n-2^i})$ are $L(\omega_{2^i - k} + \omega_{n-2^i + k})$, where $0 \leq k \leq 2^i$ and $n + 1 - 2^{i+1} + 2k$ contains $k$ to base $2$. Setting $k' = 2^i - k$, the composition factors are $L(\omega_{k'} + \omega_{n-k'})$, where $n + 1 - 2k'$ contains $2^i - k'$ to base $2$. By Lemma \ref{arithmeticlemma} we have $k' = 0$ or $2k' \geq t+1$, which proves the claim.
\end{proof}

\subsection{Construction of $V(\omega_r + \omega_{n-r})$}

\noindent We now describe a construction of $V(\omega_r + \omega_{n-r})$, in many ways similar to that of $V_{C_l}(\omega_r)$ described in Section \ref{weylconstructionC}. We will consider our group $G$ as a Chevalley group constructed from a complex simple Lie algebra of type $A_l$. 

Let $e_1, e_2, \ldots, e_{n}$ be a basis for a complex vector space $V_\C$, and let $V_\Z$ be the $\Z$-lattice spanned by this basis. Let $\mathfrak{sl}(V_\C)$ be the Lie algebra formed by the linear endomorphisms of $V_\C$ with trace zero. Then $\mathfrak{sl}(V_\C)$ is a simple Lie algebra of type $A_l$. Let $\mathfrak{h}$ be the Cartan subalgebra formed by the diagonal matrices in $\mathfrak{sl}(V_\C)$ (with respect to the basis $(e_i)$). For $1 \leq i \leq n$, define maps $\varepsilon_i : \mathfrak{h} \rightarrow \C$ by $\varepsilon_i(h) = h_i$ where $h$ is a diagonal matrix with diagonal entries $(h_1, h_2, \ldots, h_{n})$. Now $\Phi = \{ \varepsilon_i - \varepsilon_j : i \neq j \}$ is the root system for $\mathfrak{sl}(V_\C)$ and $\Phi^+ = \{ \varepsilon_i - \varepsilon_j : i < j \}$ is a system of positive roots, and $\Delta = \{ \varepsilon_i - \varepsilon_{i+1} : 1 \leq i \leq l \}$ is a base for $\Phi$. 

For any $i,j$ let $E_{i,j}$ be the linear endomorphism on $V_\C$ such that $E_{i,j}(e_j) = e_i$ and $E_{i,j}(e_k) = 0$ for $k \neq j$. Now a Chevalley basis for $\mathfrak{sl}(V_\C)$ is given by $X_{\varepsilon_i - \varepsilon_j} = E_{i,j}$ for $i \neq j$ and $H_{\varepsilon_i - \varepsilon_{i+1}} = E_{i,i} - E_{i+1,i+1}$ for $1 \leq i \leq l$. Let $\mathscr{U}_\Z$ be the Kostant $\Z$-form with respect to this Chevalley basis of $\mathfrak{sl}(V_\C)$. That is, $\mathscr{U}_\Z$ is the subring of the universal enveloping algebra of $\mathfrak{sl}(V_\C)$ generated by $1$ and all $\frac{X_{\alpha}^k}{k!}$ for $\alpha \in \Phi$ and $k \geq 1$.

Now $V_\Z$ is a $\mathscr{U}_\Z$-invariant lattice in $V_\C$. We define $V = V_\Z \otimes_\Z \fieldsymbol$. Then the simply connected Chevalley group of type $A_l$ induced by $V$ is equal to $G = \SL(V)$. 

Let $e_1^*, e_2^*, \ldots, e_{n}^*$ be a basis for $V_\C^*$, dual to the basis $(e_1, e_2, \ldots, e_{n})$ of $V_\C$ (so here $e_i^*(e_j) = \delta_{ij}$). Denote the $\Z$-lattice spanned by $e_1^*, e_2^*, \ldots, e_{n}^*$ by $V_\Z^*$. Then $V_\Z^*$ is $\mathscr{U}_\Z$-invariant and we can identify $V_\Z^* \otimes_\Z \fieldsymbol$ and $V^*$ as $G$-modules. Here the action of $G$ on $V^*$ is given by $(g \cdot f)(v) = f(g^{-1} v)$ for all $g \in G$, $f \in V^*$ and $v \in V$. 

By abuse of notation we identify the basis $(e_i \otimes 1)$ of $V$ with $(e_i)$, and the basis $(e_i^* \otimes 1)$ of $V^*$ with $(e_i^*)$.

Let $1 \leq k < n-k \leq l$. Now the Lie algebra $\mathfrak{sl}(V_\C)$ acts naturally on $\wedge^k(V_\C)$ by $$X \cdot (v_1 \wedge \cdots \wedge v_k) = \sum_{i = 1}^k v_1 \wedge \cdots \wedge v_{i-1} \wedge Xv_i \wedge v_{i+1} \wedge \cdots \wedge v_k$$ for all $X \in \mathfrak{sl}(V_\C)$ and $v_i \in V_\C$. Similarly we have an action of $\mathfrak{sl}(V_\C)$ on $\wedge^k(V_\C^*)$. Furthermore, $\mathfrak{sl}(V_\C)$ acts on $\wedge^k(V_\C) \otimes \wedge^k(V_\C^*)$ by $X \cdot (v \otimes w) = Xv \otimes w + v \otimes Xw$ for all $X \in \mathfrak{sl}(V_\C)$, $v \in \wedge^k(V_\C)$ and $w \in \wedge^k(V_\C^*)$. Here $\wedge^k(V_\Z) \otimes \wedge^k(V_\Z^*)$ is an $\mathscr{U}_\Z$-invariant lattice in $\wedge^k(V_\C) \otimes \wedge^k(V_\C^*)$, and we can and will identify $\wedge^k(V_\Z) \otimes \wedge^k(V_\Z^*) \otimes_\Z \fieldsymbol$ and $\wedge^k(V) \otimes \wedge^k(V^*)$ as $G$-modules.

The diagonal matrices in $G$ form a maximal torus $T$. Then a basis of weight vectors of $\wedge^k(V) \otimes \wedge^k(V^*)$ is given by the elements $(e_{i_1} \wedge \cdots \wedge e_{i_k}) \otimes (e_{j_1}^* \wedge \cdots \wedge e_{j_k}^*)$, where $1 \leq i_1 < \cdots < i_k \leq n$ and $1 \leq j_1 < \cdots < j_k \leq n$. The basis vector $(e_1 \wedge \cdots \wedge e_k) \otimes (e_{n}^* \wedge e_{n-1}^* \wedge \cdots \wedge e_{n-k+1}^*)$ has weight $\omega_k + \omega_{n-k}$.

The natural dual pairing between $\wedge^k(V)$ and $\wedge^k(V^*)$ (see for example \cite[B.3, pg.475-476]{FultonHarris}) induces a $G$-invariant symmetric form $\langle -,- \rangle$ on $\wedge^k(V) \otimes \wedge^k(V^*)$. If $x = (v_1 \wedge \cdots \wedge v_k) \otimes (f_1 \wedge \cdots \wedge f_k)$ and $y = (w_1 \wedge \cdots \wedge w_k) \otimes (g_1 \wedge \cdots \wedge g_k)$, with $v_i, w_j \in V$ and $f_i, g_j \in V^*$, we define  $$\langle x, y \rangle = \det (f_i(w_j))_{1 \leq i,j \leq k} \det (g_i(v_j))_{1 \leq i,j \leq k}.$$  Let $b = (e_{i_1} \wedge \cdots \wedge e_{i_k}) \otimes (e_{j_1}^* \wedge \cdots \wedge e_{j_k}^*)$ and $b' = (e_{i_1'} \wedge \cdots \wedge e_{i_k'}) \otimes (e_{j_1'}^* \wedge \cdots \wedge e_{j_k'}^*)$ be two basis elements of $\wedge^k(V) \otimes \wedge^k(V^*)$. Then $$\langle b, b' \rangle = \begin{cases} \pm 1, & \mbox{if } \{i_1, \cdots, i_k\} = \{j_1', \cdots, j_k'\} \mbox{ and } \{i_1', \cdots, i_k'\} = \{j_1, \cdots, j_k\}.  \\ 0, & \mbox{otherwise.} \end{cases} $$

Therefore the form $\langle -,- \rangle$ on $\wedge^k(V) \otimes \wedge^k(V^*)$ is non-degenerate. In precisely the same way we can find a basis of weight vectors for $\wedge^k(V_\Z) \otimes \wedge^k(V_\Z^*)$ and define a symmetric form $\langle -,- \rangle_\Z$ on $\wedge^k(V_\Z) \otimes \wedge^k(V_\Z^*)$. 

We can find the Weyl module $V(\omega_k + \omega_{n-k})$ as a submodule of $\wedge^k(V) \otimes \wedge^k(V^*)$, as shown by the following lemma (cf. Lemma \ref{weyllemma}).

\begin{lemma}\label{weyllemmaA}
Let $1 \leq k < n-k \leq l$, and let $W$ be the $G$-submodule of $\wedge^k(V) \otimes \wedge^k(V^*)$ generated by $v^+ = (e_1 \wedge \cdots \wedge e_k) \otimes (e_{n}^* \wedge e_{n-1}^* \wedge \cdots \wedge e_{n-k+1}^*)$. Then $W$ is isomorphic to the Weyl module $V(\omega_k + \omega_{n-k})$.
\end{lemma}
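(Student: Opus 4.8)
\textbf{Proof plan for Lemma \ref{weyllemmaA}.}

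The strategy is to mimic the proof of Lemma \ref{weyllemma}: first identify $W$ with an image of $V(\omega_k + \omega_{n-k})$, and then pin down $\dim W$ exactly so that the surjection must be an isomorphism. The highest weight vector $v^+$ has weight $\omega_k + \omega_{n-k}$ and is a maximal vector for the Borel subgroup (it is killed by all positive root vectors $E_{i,j}$, $i<j$, since applying $E_{i,j}$ to either tensor factor produces either $0$ or a repeated basis vector in an exterior power). By \cite[II.2.13]{JantzenBook}, the submodule $W = \mathscr{U}_\Z v^+ \otimes_\Z \fieldsymbol$ is therefore a quotient of $V(\omega_k + \omega_{n-k})$. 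So it remains to show $\dim W = \dim V(\omega_k + \omega_{n-k})$.

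For the dimension, the plan is first to compute $\dim V(\omega_k + \omega_{n-k})$ over $\C$ via the Weyl dimension formula (or equivalently by citing \cite[Ch. VIII]{Bourbaki}); call this number $N(k,n)$. Then I would give a characteristic-free spanning description of $W$ analogous to Lemma \ref{weyllemma}(i): $W$ should be spanned by the elements $(v_1 \wedge \cdots \wedge v_k) \otimes (f_1 \wedge \cdots \wedge f_k)$ where $\langle v_1, \dots, v_k \rangle$ is a $k$-subspace of $V$, $\langle f_1, \dots, f_k \rangle$ is a $k$-subspace of $V^*$, and the subspace $\langle f_1, \dots, f_k\rangle$ annihilates $\langle v_1, \dots, v_k\rangle$ — this is the $G$-orbit closure condition, using that $G = \SL(V)$ acts transitively on pairs (flag in $V$, compatible flag in $V^*$) of this type, exactly as $\Sp(V)$ acts transitively on totally isotropic subspaces in the symplectic case. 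The span of such "decomposable compatible" tensors is a $G$-submodule containing $v^+$, hence equals $W$. Its dimension is then a combinatorial count that should match $N(k,n)$; one clean way to see the count is to identify this span with the kernel of the natural contraction map $\wedge^k(V) \otimes \wedge^k(V^*) \to \wedge^{k-1}(V) \otimes \wedge^{k-1}(V^*)$ (the analogue of the contraction $\wedge^k V \to \wedge^{k-2}V$ in the $C_l$ case), whose rank is $N(k-1,n)$, giving $\dim W = \binom{n}{k}^2 - N(k-1,n) = N(k,n)$ by a telescoping/Pieri identity. Alternatively one can simply invoke a known "straightening" basis result for $V(\omega_k + \omega_{n-k})$ realized inside $\wedge^k V \otimes \wedge^k V^*$ (such realizations of Weyl modules for $GL_n$ via Schur functors are standard; e.g. \cite[4.9]{AndersenJantzen} applies, or one can cite Fulton--Harris).

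I expect the main obstacle to be making the dimension count fully rigorous in arbitrary characteristic: over $\C$ everything is classical (this is the irreducible $\mathfrak{sl}_n$-module $L(\omega_k+\omega_{n-k})$, visibly a summand of $\wedge^k V_\C \otimes \wedge^k V_\C^*$), but over $\fieldsymbol$ of characteristic $2$ one must be careful that the $\Z$-lattice $\mathscr{U}_\Z v^+$ is the "right" one, i.e. that reduction mod $p$ does not shrink the dimension. The cleanest fix is to avoid any characteristic-$2$-specific argument: show that the spanning set of decomposable compatible tensors spans over $\Z$ (hence over any field) a lattice of the correct rank $N(k,n)$, so that $\dim_{\fieldsymbol} W \geq N(k,n)$; combined with the surjection from $V(\omega_k+\omega_{n-k})$, which has dimension exactly $N(k,n)$, this forces equality and the isomorphism. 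This is precisely the structure of the proof of Lemma \ref{weyllemma}, and I would write Lemma \ref{weyllemmaA} as the evident $A_l$-analogue, citing \cite[4.9]{AndersenJantzen} for the cleanest justification that $W \cong V(\omega_k + \omega_{n-k})$.
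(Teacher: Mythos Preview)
Your plan is workable in outline but takes a considerably more laborious route than the paper. The paper bypasses any dimension count: it invokes the general fact that for any dominant $\lambda,\mu$ the tensor product $V(\lambda)\otimes V(\mu)$ contains $V(\lambda+\mu)$ as a submodule (citing Lakshmibai--Musili--Seshadri, Wang, Donkin, Mathieu). Since $\wedge^k(V)=V(\omega_k)$ and $\wedge^k(V^*)=V(\omega_{n-k})$ (both minuscule), and since the weight $\omega_k+\omega_{n-k}$ has multiplicity one in the tensor product, the submodule generated by $v^+$ is automatically the copy of $V(\omega_k+\omega_{n-k})$ guaranteed by that general fact. No dimension formula, no contraction map, no spanning argument is needed.

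Your approach instead tries to replay the Lemma~\ref{weyllemma} argument directly, and this is where it gets shaky. The identification of $W$ with the kernel of the contraction $\wedge^k V\otimes\wedge^k V^*\to\wedge^{k-1}V\otimes\wedge^{k-1}V^*$ is not obviously characteristic-free: you would need to check both that the contraction is surjective and that its kernel coincides with the span of your ``decomposable compatible'' tensors in all characteristics, neither of which is immediate (and this is exactly the kind of step that can fail at $p=2$). Your fallback citation of \cite[4.9]{AndersenJantzen} is also not clearly applicable here: that result is formulated for fundamental Weyl modules inside single exterior powers, not for $V(\omega_k+\omega_{n-k})$ inside a tensor product. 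So while your plan could likely be pushed through with enough care, the paper's one-line appeal to the $V(\lambda+\mu)\hookrightarrow V(\lambda)\otimes V(\mu)$ theorem is both shorter and avoids all the characteristic-dependent pitfalls.
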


\begin{proof} It is a general fact about Weyl modules that $V(\lambda) \otimes V(\mu)$ always has $V(\lambda + \mu)$ as a submodule. For simple groups of classical type (in particular, for our $G$ of type $A_l$) this follows from results proven first by Lakshmibai et. al. \cite[Theorem 2 (b)]{LakshmibaiMusiliSeshadri} or from a more general result of Wang \cite[Theorem B, Lemma 3.1]{Wang}. For other types, the fact is a consequence of results due to Donkin \cite{Donkin} (all types except $E_7$ and $E_8$ in characteristic two) or Mathieu \cite{Mathieu} (in general). In any case, now the weight $\lambda + \mu$ occurs with multiplicity $1$ in $V(\lambda) \otimes V(\mu)$, so any vector of weight $\lambda + \mu$ in $V(\lambda) \otimes V(\mu)$ will generate a submodule isomorphic to $V(\lambda + \mu)$.

To prove our lemma, note that $\wedge^k(V) = L(\omega_k)$ and $\wedge^k(V^*) = L(\omega_{n-k})$. Furthermore, $\omega_k$ and $\omega_{n-k}$ are minuscule weights, so $L(\omega_k) = V(\omega_k)$ and $L(\omega_{n-k}) = V(\omega_{n-k})$. Here $v^+$ is a vector of weight $\omega_k + \omega_{n-k}$ in $\wedge^k(V) \otimes \wedge^k(V^*)$, so the claim follows from the result in the previous paragraph.
\end{proof}

For all $1 \leq k < n-k \leq l$, we will identify $V(\omega_k + \omega_{n-k})$ with the submodule $W$ from Lemma \ref{weyllemmaA}. Set $V(\omega_k + \omega_{n-k})_\Z = \mathscr{U}_\Z v^+$ where $v^+$ is as in Lemma \ref{weyllemmaA}. Then we can and will identify $V(\omega_k + \omega_{n-k})_\Z \otimes_\Z \fieldsymbol$ and $V(\omega_k + \omega_{n-k})$ as $G$-modules.

Note that a basis for the zero weight space of $\wedge^k(V) \otimes \wedge^k(V^*)$ is given by vectors of the form $(e_{i_1} \wedge \cdots \wedge e_{i_k}) \otimes (e_{i_1}^* \wedge \cdots \wedge e_{i_k}^*)$, where $1 \leq i_1 < \cdots < i_k \leq n$. We will need the following lemma, which gives a set of generators for the zero weight space of $V(\omega_k + \omega_{n-k})$ (cf. Lemma \ref{weightzerolemma}).

\begin{lemma}\label{weightzerolemmaA}
Suppose that $1 \leq k < n-k \leq l$. Then the zero weight space of $V(\omega_k + \omega_{n-k})_\Z$ (thus also of $V(\omega_k + \omega_{n-k})$) is spanned by vectors of the form 

$$\sum_{f_s \in \{j_s, k_s\}}(-1)^{|\{s : f_s = j_s\}|} (e_{f_1} \wedge \cdots \wedge e_{f_k}) \otimes (e_{f_1}^* \wedge \cdots \wedge e_{f_k}^*),$$ where $(k_1, \ldots, k_k)$ and $(j_1, \ldots, j_k)$ are sequences such that $1 \leq k_r < j_r \leq n$ for all $r$, and $j_r, k_r \neq j_{r'}, k_{r'}$ for all $r \neq r'$.
\end{lemma}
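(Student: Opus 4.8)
The plan is to work throughout inside the realization of the Weyl module provided by Lemma \ref{weyllemmaA}, namely $V(\omega_k + \omega_{n-k})_\Z = \mathscr{U}_\Z v^+ \subseteq \wedge^k(V_\Z) \otimes \wedge^k(V_\Z^*)$, in complete analogy with the argument behind the type $C_l$ statement (Lemma \ref{weightzerolemma}). Since $v^+$ is a highest weight vector generating the module, the Poincaré--Birkhoff--Witt theorem shows that the zero weight space of $V(\omega_k + \omega_{n-k})_\Z$ is spanned over $\Z$ by the vectors $u\, v^+$, where $u$ runs over ordered monomials in the divided powers of the negative root vectors of total weight $-(\omega_k + \omega_{n-k})$; in this realization the relevant operators are $E_{b,a}^{(c)}$ with $a < b$, acting on $\wedge^k(V) \otimes \wedge^k(V^*)$ by the Leibniz rule. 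So there are two things to establish: (a) each vector in the statement lies in $V(\omega_k + \omega_{n-k})_\Z$, and (b) these vectors span the whole zero weight space. It is also worth recording the structural observation that the vector attached to $(k_1,\dots,k_k)$, $(j_1,\dots,j_k)$ is precisely the image of $\bigotimes_{s=1}^{k}\bigl((e_{k_s}\otimes e_{k_s}^*) - (e_{j_s}\otimes e_{j_s}^*)\bigr)$ under the canonical $G$-equivariant surjection $(V \otimes V^*)^{\otimes k} \to \wedge^k(V) \otimes \wedge^k(V^*)$ obtained by rearranging tensor factors to $V^{\otimes k}\otimes (V^*)^{\otimes k}$ and then applying the two exterior-product quotient maps; this explains the shape of the sum and of the signs.

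For part (a), I would produce explicit elements of $\mathscr{U}_\Z$ carrying $v^+$ to the vectors in the statement, building such an element slot by slot. For $k = 1$ this reduces to checking that $(e_{k_1}\otimes e_{k_1}^*) - (e_{j_1}\otimes e_{j_1}^*) \in \mathscr{U}_\Z(e_1 \otimes e_n^*) = V(\omega_1 + \omega_{n-1})_\Z$, which is immediate by applying a single negative root vector (the one-dimensional computation analogous to $X_{\varepsilon_1-\varepsilon_2}E_{-1,1}(e_1\wedge e_2) = y_2 - y_1$ in the type $C_l$ case). In general one relocates the pair of indices carried by slot $s$ of $v^+$ to the positions $\{k_s, j_s\}$, which introduces the expected difference $(e_{k_s}\otimes e_{k_s}^*) - (e_{j_s}\otimes e_{j_s}^*)$ in that pair of tensor slots. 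The subtlety is that a root operator intended for one slot also acts on all the other slots, so one must choose the order in which the slots are processed (and use divided powers when two slots are simultaneously involved, just as in the computation behind Lemma \ref{weightzerolemma}) so that the unwanted cross-terms are killed because the target position is already occupied. This bookkeeping, for an arbitrary perfect matching of an arbitrary $2k$-subset of $\{1,\dots,n\}$, is the main obstacle in the proof; everything else is routine.

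For part (b), I would first compute the dimension of the target. The zero weight space of $\wedge^k(V)\otimes\wedge^k(V^*)$ has the basis $\{(e_{i_1}\wedge\cdots\wedge e_{i_k})\otimes(e_{i_1}^*\wedge\cdots\wedge e_{i_k}^*) : 1\le i_1 < \cdots < i_k \le n\}$, of dimension $\binom{n}{k}$; using the characteristic-zero decomposition $\wedge^k(V)\otimes\wedge^k(V^*) \cong \bigoplus_{j\ge 0} V(\omega_{k-j}+\omega_{n-k+j})$ (with $\omega_0 = \omega_n = 0$), and the fact that the $\Z$-rank of a weight space of a Weyl module does not depend on $\fieldsymbol$, a telescoping sum gives $\dim_\fieldsymbol V(\omega_k+\omega_{n-k})_0 = \binom{n}{k} - \binom{n}{k-1}$. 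It then remains to see that the vectors in the statement span a subspace of at least this dimension. The more robust route is to show directly that every PBW monomial $u\,v^+$ of the type described above lies in the $\Z$-span of the vectors in the statement: writing the weight $\omega_k + \omega_{n-k}$ as a sum of positive roots and tracking the action of $u$ on $v^+ = (e_1\wedge\cdots\wedge e_k)\otimes(e_n^*\wedge\cdots\wedge e_{n-k+1}^*)$ in the exterior algebra, one checks that each such $u\,v^+$ decomposes into the asserted vectors. Combining this with part (a) gives the equality of the zero weight space with the asserted span. Apart from the bookkeeping in (a), the only other nontrivial point is this combinatorial reduction in (b), which is entirely parallel to the one already carried out for type $C_l$.
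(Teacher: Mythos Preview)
Your ``more robust route'' in (b) is exactly the paper's argument, and it is the whole argument: the paper takes each PBW monomial $\prod_{\alpha \in \Phi^+} X_{-\alpha}^{(k_\alpha)} v^+$ of total weight $\omega_k + \omega_{n-k}$, classifies the contributing roots into three types according to whether $\varepsilon_i - \varepsilon_j$ has $i \in \{1,\dots,k\}$ or $i \in \{k+1,\dots,n-k\}$ and similarly for $j$, reads off from the expression of $\omega_k + \omega_{n-k}$ in simple roots that all $k_\alpha \in \{0,1\}$ and that the ``intermediate'' indices used in types (I) and (II) coincide, and then computes the product directly. The upshot is sharper than what you propose: each PBW monomial is shown to be \emph{equal} (up to sign) to a single vector of the asserted form, not merely in their $\Z$-span.

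The point I want to flag is that your part (a) --- which you identify as ``the main obstacle'' --- is not needed. The lemma only says that the zero weight space is spanned by vectors of the stated form; once every PBW monomial applied to $v^+$ is itself such a vector, this is immediate. The paper never constructs a $\mathscr{U}_\Z$-element hitting an \emph{arbitrary} prescribed pair of sequences $(k_1,\dots,k_k)$, $(j_1,\dots,j_k)$, and for the intended application (Lemma \ref{mainlemmaA}(ii), checking that $\gamma$ is orthogonal to the zero weight space) one only needs the containment in this direction. So your plan is correct, but the hard part you anticipated is simply absent; the actual work is the combinatorial analysis of which root strings can occur, which you allude to only briefly. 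The dimension-count alternative you mention would also work, but the paper does not take that route.
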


\begin{proof}
We give a proof somewhat similar to that of Lemma \ref{weightzerolemma} given in \cite[pg. 40, Lemma 6]{JantzenThesis}. The zero weight space of $V(\omega_k + \omega_{n-k})_\Z$ is generated by elements of the form $$\prod_{\alpha \in \Phi^+} \frac{X_{-\alpha}^{k_\alpha}}{k_\alpha!}v^+$$ where $k_\alpha$ are non-negative integers, $\sum_{\alpha \in \Phi^+} k_\alpha \alpha = \omega_k + \omega_{n-k}$ and the product is taken with respect to some fixed ordering of the positive roots. For $\alpha \in \Phi^+$ such that $X_{-\alpha}v^+ = 0$, we can assume $k_\alpha = 0$ by choosing a suitable ordering of $\Phi^+$. Therefore we will assume that if $k_\alpha > 0$, then $\alpha$ is of one of the following types.

\begin{enumerate}[(I)]
\item $\alpha = \varepsilon_i - \varepsilon_j$ for $1 \leq i \leq k$ and $k+1 \leq j \leq n-k$.
\item $\alpha = \varepsilon_i - \varepsilon_j$ for $k+1 \leq i \leq n-k$ and $n-k+1 \leq j \leq n$.
\item $\alpha = \varepsilon_i - \varepsilon_j$ for $1 \leq i \leq k$ and $n-k+1 \leq j \leq n$.
\end{enumerate}

Note that the $X_{-\alpha}$ with $\alpha$ of type (I) commute with each other. The same is also true for types (II) and (III). 

Writing $\omega_k + \omega_{n-k}$ in terms of the simple roots, we see that $\omega_k + \omega_{n-k}$ is equal to 
\begin{equation}
\alpha_1 + 2 \alpha_2 + \cdots + (k-1) \alpha_{k-1} + k \alpha_k + \cdots + k \alpha_{n-k} + (k-1) \alpha_{n-k+1} + \cdots + \alpha_{n-1} \tag{*}
\end{equation}
Then from the fact that $\sum_{\alpha \in \Phi^+} k_\alpha \alpha = \omega_k + \omega_{n-k}$ we will deduce the following. 

\begin{enumerate}[(1)]
\item For any $1 \leq i \leq k$, there exists a unique $\alpha \in \Phi^+$ such that $k_{\alpha} = 1$ and $\alpha = \varepsilon_i - \varepsilon_{j'}$ for some $k+1 \leq j' \leq n$. 
\item For any $n-k+1 \leq j \leq n$ there exists a unique $\alpha \in \Phi^+$ such that $k_{\alpha} = 1$ and $\alpha = \varepsilon_{i'} - \varepsilon_j$ for some $1 \leq i' \leq n-k$. 
\end{enumerate}

For $i = 1$ and $j = n$ these claims are clear, since $\alpha_1$ and $\alpha_{n-1}$ occur only once in the expression (*) of $\omega_k + \omega_{n-k}$ as a sum of simple roots. For $i > 1$ claim (1) follows by induction, since $\varepsilon_i - \varepsilon_{j'}$ contributes $\alpha_i + \alpha_{i+1} + \cdots + \alpha_{k} + \cdots + \alpha_{j'-1}$ to the expression (*) of $\omega_k + \omega_{n-k}$ as a sum of simple roots. Claim (2) follows similarly for $j < n$.

In particular, it follows from claims (1) and (2) that $k_\alpha \in \{0, 1\}$ for all $\alpha \in \Phi^+$. Let $\mathscr{A}_1$, $\mathscr{A}_2$ and $\mathscr{A}_3$ be the sets of $\alpha \in \Phi^+$ of type (I), (II) and (III) respectively such that $k_{\alpha} = 1$. It follows from claim (1) that $|\mathscr{A}_1| + |\mathscr{A}_3| = k$ and from claim (2) that $|\mathscr{A}_2| + |\mathscr{A}_3| = k$, so then $|\mathscr{A}_1| = |\mathscr{A}_2 |= k'$ for some $0 \leq k' \leq k$. Thus we can write \begin{align*}
\mathscr{A}_1 &= \{ \varepsilon_{i_1} - \varepsilon_{w_1}, \ldots, \varepsilon_{i_{k'}} - \varepsilon_{w_{k'}} \} \\ 
\mathscr{A}_2 &= \{ \varepsilon_{z_1} - \varepsilon_{j_1}, \ldots, \varepsilon_{z_{k'}} - \varepsilon_{j_{k'}} \} \\ 
\mathscr{A}_3 &= \{ \varepsilon_{i_{k'+1}} - \varepsilon_{j_{k'+1}}, \ldots, \varepsilon_{i_k} - \varepsilon_{j_k}\}
\end{align*}
where $1 \leq i_r \leq k$ and $n-k+1 \leq j_r \leq n$ for all $1 \leq r \leq k$, and $k+1 \leq w_r, z_r \leq n-k$ for all $1 \leq r \leq k'$. Furthermore, $\{i_1, \ldots, i_k\} = \{1, 2, \ldots, k\}$ and $\{j_1, \ldots, j_k\} = \{n-k+1, \ldots, n-1, n\}$.

We choose the ordering of $\Phi^+$ so that $$\prod_{\alpha \in \Phi^+} \frac{X_{-\alpha}^{k_\alpha}}{k_\alpha!} = \prod_{\alpha \in \mathscr{A}_3} X_{-\alpha} \prod_{\alpha \in \mathscr{A}_2} X_{-\alpha}  \prod_{\alpha \in \mathscr{A}_1} X_{-\alpha}.$$

It is another consequence of $\sum_{\alpha \in \Phi^+} k_\alpha \alpha = \omega_k + \omega_{n-k}$ that $\{w_1, \ldots, w_{k'}\} = \{z_1, \ldots, z_{k'}\}$. Indeed, in the expression (*) of $\omega_k + \omega_{n-k}$ as a sum of simple roots, for any $k+1 \leq r \leq n-k$ the simple root $\alpha_r$ occurs $k$ times. On the other hand, the $\alpha$ of types (I), (II), (III) that contribute to $\alpha_r$ in the sum are precisely those of type (I) or (III) with $j > r$ (total of $k - |\{r' : w_{r'} \leq r\}|$), and those of type (II) with $j \leq r$ (total of $|\{r' : z_{r'} \leq r \}|$). 

Therefore in the sum $\sum_{\alpha \in \Phi^+} k_\alpha \alpha$, the contribution to $\alpha_r$ is equal to $k - |\{r' : w_{r'} \leq r\}| + |\{r' : z_{r'} \leq r \}|$. Since this has to be equal to $k$, we get $|\{r' : z_{r'} \leq r \}| = |\{r' : w_{r'} \leq r\}|$ for all $k+1 \leq r \leq n-k$, which implies $\{w_1, \ldots, w_{k'}\} = \{z_1, \ldots, z_{k'}\}$.

Then since the $X_{-\alpha}$ with $\alpha$ of type (II) commute with each other, we may assume that $z_r = w_r$ for all $1 \leq r \leq k'$. Denote $w = \prod_{r = 1}^{k'} E_{w_r, i_r} v^+$. A straightforward computation shows that $w = (e_{\pi(1)} \wedge \cdots \wedge e_{\pi(k)}) \otimes (e_{n}^* \wedge e_{n-1}^* \wedge \cdots \wedge e_{n-k+1}^*)$, where $\pi(r) = w_{r'}$ if $r = i_{r'}$ and $\pi(r) = r$ otherwise. Now \begin{align*}
\prod_{\alpha \in \Phi^+} \frac{X_{-\alpha}^{k_\alpha}}{k_\alpha!}v^+ &= \prod_{r={k'+1}}^{k} E_{j_r, i_r} \prod_{r = 1}^{k'} E_{j_r, w_r} \prod_{r = 1}^{k'} E_{w_r, i_r} v^+\\
&= \prod_{r={k'+1}}^{k} E_{j_r, i_r} \prod_{r = 1}^{k'} E_{j_r, w_r} w \\
&= \prod_{r={1}}^{k} E_{j_r, k_r} w
\end{align*} where $(k_1, \ldots, k_k) = (w_1, \ldots, w_{k'}, i_{{k'}+1}, \ldots, i_k)$. In the last equality we just combine the terms, and this makes sense since $X_{-\alpha}$ of type (III) commute with those of type (II).



Computing the expression $\prod_{r={1}}^{k} E_{j_r, k_r} w$, we see that it is equal to a sum of $2^k$ distinct elements of $\wedge^k(V_\Z) \otimes \wedge^k(V_\Z^*)$, with each summand being equal to $w$ transformed in the following way:

\begin{itemize}
\item For all $1 \leq s \leq k'$, replace $e_{w_s}$ by $e_{j_s}$, or replace $e_{j_s}^*$ by $-e_{w_s}^*$.
\item For all $k'+1 \leq s \leq k$, replace $e_{i_s}$ by $e_{j_s}$, or replace $e_{j_s}^*$ by $-e_{i_s}^*$.
\end{itemize}

For this we conclude that up to a sign, $\prod_{\alpha \in \Phi^+} \frac{X_{-\alpha}^{k_\alpha}}{k_\alpha!}v^+$ is as in the statement of the lemma, with sequences $(k_1, \ldots, k_k)$ and $(j_1, \ldots, j_k)$ as defined here.\end{proof}

\begin{lemma}\label{wedgefixedpointA}
Suppose that $1 \leq k < n-k \leq l$. Then the vector $$\sum_{1 \leq i_1 < \cdots < i_{k} \leq n} (e_{i_1} \wedge \cdots \wedge e_{i_{k}}) \otimes (e_{i_1}^* \wedge \cdots \wedge e_{i_{k}}^*)$$ is fixed by the action of $G$ on $\wedge^k(V) \otimes \wedge^k(V^*)$. Furthermore, any $G$-fixed point in $\wedge^k(V) \otimes \wedge^k(V^*)$ is a scalar multiple of $\gamma$.
\end{lemma}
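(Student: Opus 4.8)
The plan is to recognise $\wedge^k(V) \otimes \wedge^k(V^*)$ as the space of endomorphisms of $\wedge^k(V)$ under the conjugation action of $G$, and to identify $\gamma$ with the identity endomorphism. The dual pairing used above to define $\langle -,- \rangle$ identifies $\wedge^k(V^*)$ with $(\wedge^k V)^*$ as $G$-modules, and under this identification the basis vector $e_{i_1}^* \wedge \cdots \wedge e_{i_k}^*$ (for $1 \leq i_1 < \cdots < i_k \leq n$) is exactly the element of the basis dual to $\{e_{j_1} \wedge \cdots \wedge e_{j_k} : 1 \leq j_1 < \cdots < j_k \leq n\}$ attached to $e_{i_1} \wedge \cdots \wedge e_{i_k}$, since $\langle e_{i_1}^* \wedge \cdots \wedge e_{i_k}^*, e_{j_1} \wedge \cdots \wedge e_{j_k} \rangle = \det(e_{i_a}^*(e_{j_b}))_{a,b}$ equals $1$ when $\{i_1 < \cdots < i_k\} = \{j_1 < \cdots < j_k\}$ and $0$ otherwise. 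Hence $\wedge^k(V) \otimes \wedge^k(V^*) \cong \wedge^k(V) \otimes (\wedge^k V)^* \cong \operatorname{End}_{\fieldsymbol}(\wedge^k V)$ as $G$-modules, where the last isomorphism sends $v \otimes f$ to the map $w \mapsto f(w)\,v$; a routine check shows that it intertwines the tensor-product action on the left with the conjugation action $g \cdot \phi = g \phi g^{-1}$ on the right, using that $G$ acts on $V^*$, hence on $\wedge^k(V^*)$, by the contragredient action $(g \cdot f)(v) = f(g^{-1}v)$. Under this chain of isomorphisms the vector $\gamma = \sum_{1 \leq i_1 < \cdots < i_k \leq n}(e_{i_1} \wedge \cdots \wedge e_{i_k}) \otimes (e_{i_1}^* \wedge \cdots \wedge e_{i_k}^*)$, being the sum over a basis of $\wedge^k V$ of $b \otimes b^*$, is carried to $\operatorname{id}_{\wedge^k V}$.

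Since $\operatorname{id}_{\wedge^k V}$ is manifestly fixed by conjugation, this proves that $\gamma$ is a $G$-fixed point. For the uniqueness statement, recall that $\wedge^k(V) = L(\omega_k)$ is a minuscule, hence irreducible, $G$-module, so Schur's lemma over the algebraically closed field $\fieldsymbol$ gives $\operatorname{End}_G(\wedge^k V) = \fieldsymbol \cdot \operatorname{id}_{\wedge^k V}$; that is, the conjugation-fixed points of $\operatorname{End}_{\fieldsymbol}(\wedge^k V)$ form a one-dimensional subspace. Transporting this back through the isomorphism above shows that every $G$-fixed point of $\wedge^k(V) \otimes \wedge^k(V^*)$ is a scalar multiple of $\gamma$.

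I expect the only mild obstacle to be the bookkeeping in the first paragraph: one must make sure the pairing conventions make $\{e_{i_1}^* \wedge \cdots \wedge e_{i_k}^*\}$ exactly the dual basis, so that no signs or reindexing corrections appear when identifying $\gamma$ with $\operatorname{id}$, and that $\wedge^k$ of the contragredient action really matches the conjugation action on $\operatorname{End}$. If one prefers to avoid Schur's lemma, the uniqueness part can instead be obtained exactly as in Lemma \ref{wedgefixedpoint}: the alternating group $A_n$ acts on $V$ by $\sigma \cdot e_i = e_{\sigma(i)}$, giving an embedding $A_n < \SL(V) = G$; the sign picked up when reordering wedge factors is the same in the $\wedge^k(V)$ and the $\wedge^k(V^*)$ tensorands, so it cancels, and thus $A_n$ permutes the zero-weight basis $\{(e_{i_1} \wedge \cdots \wedge e_{i_k}) \otimes (e_{i_1}^* \wedge \cdots \wedge e_{i_k}^*) : i_1 < \cdots < i_k\}$. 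This action is transitive, because the setwise stabiliser $\Sigma_k \times \Sigma_{n-k}$ of a $k$-subset in $\Sigma_n$ contains an odd permutation (as $n - k \geq 2$), so already $A_n$ is transitive on $k$-subsets of $\{1, \dots, n\}$. Since any $G$-fixed vector has weight zero, it must then be a scalar multiple of $\sum_b b = \gamma$.
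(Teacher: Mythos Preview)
Your proof is correct, and it is genuinely different from the paper's. The paper proves $G$-invariance of $\gamma$ by a direct matrix computation using the Cauchy--Binet formula to show $A\cdot\gamma=\gamma$ for every $A\in\GL(V)$, and proves uniqueness by embedding the alternating group $\operatorname{Alt}(n)<G$ and using its $(n-2)$-transitivity to see that it permutes the zero-weight basis transitively. Your approach bypasses both computations by recognising $\wedge^k(V)\otimes\wedge^k(V^*)\cong\operatorname{End}_{\fieldsymbol}(\wedge^kV)$ as $G$-modules, under which $\gamma$ becomes the identity map; invariance is then tautological, and uniqueness follows immediately from Schur's lemma since $\wedge^k(V)=L(\omega_k)$ is irreducible. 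This is cleaner and more conceptual, and it avoids the somewhat ad hoc Cauchy--Binet argument entirely. Your alternative uniqueness argument via $A_n$ is essentially the paper's argument, though you justify transitivity slightly differently (using that the setwise stabiliser $\Sigma_k\times\Sigma_{n-k}$ contains an odd permutation since $n-k\geq 2$, rather than invoking $(n-2)$-transitivity).
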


\begin{proof}
(cf. Lemma \ref{wedgefixedpoint})  The fact that $\gamma$ is fixed by $G$ is an exercise in linear algebra. We will give a proof for convenience of the reader. For this we first need to introduce some notation. Let $A$ be an $n \times n$ matrix with entries in $\fieldsymbol$ and denote the entry on $i$th row and $j$th column of $A$ by $A_{i,j}$. For indices $1 \leq i_1 < \cdots < i_k \leq n$ and $1 \leq j_1 < \cdots < j_k \leq n$, set $I = \{i_1, \ldots, i_k\}$ and $J = \{j_1, \ldots, j_k\}$. Then the $k \times k$ minor of $A$ defined by $I$ and $J$ is the determinant of the $k \times k$ matrix $(A_{i_p, j_q})$. We denote this minor by $[A]_{I,J}$. The following relation between minors of matrices $A$, $B$, and $AB$ (similar to the matrix multiplication rule) is a special case of the Cauchy-Binet formula. A proof can be found in \cite[4.6, pg. 208-214]{BroidaWilliamson}.

\begin{prop*}[Cauchy-Binet formula]
Let $A$ and $B$ be $n \times n$ matrices. For any $k$-element subsets $I, J$ of $\{1, \ldots, n\}$, we have $$[AB]_{I, J} = \sum_{T} [A]_{I, T} [B]_{T, J}$$ where the sum runs over all $k$-element subsets $T$ of $\{1, \ldots, n\}$.
\end{prop*}

Consider $A \in \GL(V)$ as a matrix with respect to the basis $e_1, \ldots, e_n$ of $V$. Now for any $1 \leq k \leq n$, the matrix $A$ acts on the exterior power $\wedge^k(V)$ by $A \cdot (v_1 \wedge \cdots \wedge v_k) = Av_1 \wedge \cdots \wedge Av_k$ for all $v_i \in V$. For $I = \{i_1, \ldots, i_k\}$ with $1 \leq i_1 < \cdots < i_k \leq n$, denote $e_I = e_{i_1} \wedge \cdots \wedge e_{i_k}$. A straightforward calculation shows that $A \cdot e_I = \sum_{J} [A]_{J, I} e_J$, where the sum runs over all $k$-element subsets $J$ of $\{1, \ldots, n\}$. 

With respect to the dual basis $e_1^*, \ldots, e_n^*$ of $V^*$, the action of $A$ on $V^*$ has matrix $(A^t)^{-1}$, where $A^t$ is the transpose of $A$. If we denote $e_I^* = e_{i_1}^* \wedge \cdots \wedge e_{i_k}^*$ as before, then $A \cdot e_I^* = \sum_{J} [(A^t)^{-1}]_{J, I} e_J^* = \sum_{J} [A^{-1}]_{I, J} e_J^*$ where the sum runs over all $k$-element subsets $J$ of $\{1, \ldots, n\}$.

We are now ready to prove that $A$ fixes the vector $\gamma$. Note that $\gamma = \sum_{I} e_I \otimes e_I^*$, where the sum runs over all $k$-element subsets $I$ of $\{1, \ldots, n\}$. From the observations before, we see that $A \cdot \gamma$ is equal to $$\sum_{I} \sum_{J,J'} [A]_{J, I} [A^{-1}]_{I, J'} e_J \otimes e_{J'}^* = \sum_{J,J'} \sum_{I} [A]_{J, I} [A^{-1}]_{I, J'} e_J \otimes e_{J'}^*$$ where the sums run over $k$-element subsets $I$, $J$, and $J'$ of $\{1, \ldots, n\}$. From the Cauchy-Binet formula, we have $\sum_{I} [A]_{J, I} [A^{-1}]_{I, J'} = [1]_{J, J'} = 0$ if $J \neq J'$ and $1$ if $J = J'$. Therefore $A \cdot \gamma = \gamma$.

To show that $\gamma$ is a unique $G$-fixed point up to a scalar, note first that any $G$-fixed point must have weight zero. Recall also that the zero weight space of $\wedge^k(V)$ has basis $$\mathscr{B} = \{(e_{i_1} \wedge \cdots \wedge e_{i_{k}}) \otimes (e_{i_1}^* \wedge \cdots \wedge e_{i_{k}}^*) : 1 \leq i_1 < \cdots < i_k \leq n \}.$$ 

Now the group $\Sigma_{n}$ of permutations of $\{1,2, \ldots, n\}$ acts on $V$ by $\sigma \cdot e_{i} = e_{\sigma(i)}$ for all $\sigma \in \Sigma_{n}$. This gives an embedding $\Sigma_{n} < \GL(V)$. Note that then $\sigma \cdot e_i^* = e_{\sigma(i)}^*$ for all $\sigma \in \Sigma_{n}$, so it follows that $\Sigma_{n}$ acts on $\mathscr{B}$.

For $\sigma \in \Sigma_{n}$ we have $\det \sigma = 1$ if and only if $\sigma$ is an even permutation, so we get an embedding $\operatorname{Alt}(n) < G$ for the alternating group. It is well known that $\operatorname{Alt}(n)$ is $(n-2)$-transitive, so $\operatorname{Alt}(n)$ acts transitively on $\mathscr{B}$ since $k \leq n-2$. Thus any $\operatorname{Alt}(n)$-fixed point in the linear span of $\mathscr{B}$ must be a scalar multiple of $\sum_{b \in \mathscr{B}} b = \gamma$.
\end{proof}

We now begin the proof of Theorem \ref{quadpropA}. For the rest of this section, we will make the following assumption. 

\begin{center}\emph{Assume that $\Ch \fieldsymbol = 2$.}\end{center}

Let $1 \leq k < n-k \leq l$ and suppose that $L(\omega_k + \omega_{n-k})$ is not orthogonal. Then by Proposition \ref{char2quad} (iii) we have $\operatorname{H}^1(G, L(\omega_k + \omega_{n-k})) \neq 0$, so by Corollary \ref{extcorollaryA} we have $k = 2^{i}$ for some $i \geq 0$ and $n+1 \equiv 2^i + t \mod{2^{i+1}}$ for some $0 \leq t < 2^i$. What remains is to determine when $L(\omega_k + \omega_{n-k})$ is orthogonal for such $k$. The main argument is the following lemma (cf. Lemma \ref{mainlemma}), which reduces the question to the evaluation of the invariant quadratic form on $V(\omega_k + \omega_{n-k})$ on a particular $v \in V(\omega_k + \omega_{n-k})$.

\begin{lemma}\label{mainlemmaA}
Let $n > 2^{i+1}$, where $i \geq 0$. Suppose that $n + 1 \equiv 2^i + t \mod{2^{i+1}}$, where $0 \leq t < 2^i$. Define the vector $\gamma \in \wedge^{2^{i}}(V) \otimes \wedge^{2^i}(V^*)$ to be equal to $$\sum_{1 \leq i_1 < \cdots < i_{2^i} \leq n-t} (e_{i_1} \wedge \cdots \wedge e_{i_{2^i}}) \otimes (e_{i_1}^* \wedge \cdots \wedge e_{i_{2^i}}^*).$$ Then 

\begin{enumerate}[\normalfont (i)]
\item $\gamma$ is in $V(\omega_{2^{i}} + \omega_{n-2^i})$ and is a fixed point for the subgroup $A_{l-t} < G$,
\item $\gamma$ is in $\rad V(\omega_{2^{i}} + \omega_{n-2^i})$,
\item $L(\omega_{2^{i}} + \omega_{n-2^i})$ is orthogonal if and only if $Q(\gamma) = 0$, where $Q$ is a nonzero $G$-invariant quadratic form on $V(\omega_{2^{i}} + \omega_{n-2^i})$.
\end{enumerate}

\end{lemma}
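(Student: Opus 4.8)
The plan is to mimic the proof of Lemma \ref{mainlemma} essentially line by line, substituting each type-$C_l$ ingredient by its type-$A_l$ counterpart: Lemma \ref{weyllemmaA} for Lemma \ref{weyllemma}, Corollary \ref{trivialsubmoduleA} for Corollary \ref{trivialsubmoduleC}, Corollary \ref{nontrivialcfweylA} for Corollary \ref{nontrivialcfweylC}, Lemma \ref{wedgefixedpointA} for Lemma \ref{wedgefixedpoint}, Lemma \ref{weightzerolemmaA} for Lemma \ref{weightzerolemma}, Lemma \ref{restrictionlemmaA} for Lemma \ref{restrictionlemmaC}, and Corollary \ref{extcorollaryA} for Corollary \ref{extcorollary}. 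The key structural point that makes this work is that $n+1 \equiv 2^i + t \pmod{2^{i+1}}$ gives $(n-t)+1 \equiv 2^i \pmod{2^{i+1}}$, so for the subgroup $A_{l-t} = \SL(V')$ with $V' = \langle e_1, \ldots, e_{n-t}\rangle$ we are in the ``$t = 0$'' case of Corollary \ref{trivialsubmoduleA}, i.e.\ $V_{A_{l-t}}(\omega_{2^i} + \omega_{(n-t)-2^i})$ has a one-dimensional trivial submodule. One should record at the outset that $n > 2^{i+1}$ together with the congruence forces $n \geq 3\cdot 2^i + t - 1$, which is exactly what is needed to guarantee all the auxiliary inequalities ($n - t > 2^{i+1}$, $2^i \leq n-t-2$, and the range conditions below).

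For part (i), $\gamma$ is visibly the unique (up to scalar) $A_{l-t}$-fixed point of $\wedge^{2^i}(V') \otimes \wedge^{2^i}((V')^*)$ by Lemma \ref{wedgefixedpointA} applied to $\SL(V')$, and it is also fixed by $A_{l-t}$ inside $\wedge^{2^i}(V) \otimes \wedge^{2^i}(V^*)$; the real content is $\gamma \in V(\omega_{2^i}+\omega_{n-2^i})$. Here the argument departs slightly from the type-$C$ case, and this is the point I expect to be the main obstacle. In type $C$ the subgroup's highest weight vector $e_1 \wedge \cdots \wedge e_{2^{i+1}}$ coincided with that of the ambient Weyl module, so nothing had to be checked; but the highest weight vector of $V_{A_{l-t}}(\omega_{2^i}+\omega_{(n-t)-2^i})$ is $w^+ = (e_1 \wedge \cdots \wedge e_{2^i}) \otimes (e_{n-t}^* \wedge \cdots \wedge e_{n-t-2^i+1}^*)$, built from the ``top'' of the dual of $V'$, whereas the ambient highest weight vector is $v^+ = (e_1 \wedge \cdots \wedge e_{2^i}) \otimes (e_n^* \wedge \cdots \wedge e_{n-2^i+1}^*)$. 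So the plan is: first show $w^+ \in V_{A_l}(\omega_{2^i}+\omega_{n-2^i})$ by applying to $v^+$ a suitable product of lowering operators $X_{-\alpha} = E_{c,c'}$ with $c > c' > 2^i$, chosen so that each operator shifts exactly one dual index down and annihilates the wedge part $e_1 \wedge \cdots \wedge e_{2^i}$ (the inequality $n \geq 3\cdot 2^i + t - 1$ ensures such $c'$ are always available). Then, using the $A_{l-t}$-module identification $\wedge^{2^i}(V') \otimes \wedge^{2^i}(\langle e_1^*, \ldots, e_{n-t}^*\rangle) \cong \wedge^{2^i}(V') \otimes \wedge^{2^i}((V')^*)$, Lemma \ref{weyllemmaA} identifies the $A_{l-t}$-submodule generated by $w^+$ with $V_{A_{l-t}}(\omega_{2^i}+\omega_{(n-t)-2^i})$, which by Corollary \ref{trivialsubmoduleA} has a trivial submodule; by Lemma \ref{wedgefixedpointA} that submodule is $\langle \gamma \rangle$. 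Since $w^+$ lies in $\mathscr{U}_\Z v^+$, this submodule is contained in $V_{A_l}(\omega_{2^i}+\omega_{n-2^i})$, hence so is $\gamma$.

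For part (ii), by Proposition \ref{char2quad}(ii) — together with the fact, established as in Section \ref{typeCsection}, that $\langle -,- \rangle$ restricted to $V(\omega_{2^i}+\omega_{n-2^i})$ is a nonzero $G$-invariant bilinear form and hence its radical is $\rad V(\omega_{2^i}+\omega_{n-2^i})$ — it suffices to show $\gamma$ is orthogonal to $V(\omega_{2^i}+\omega_{n-2^i})$ with respect to $\langle -,- \rangle$ on $\wedge^{2^i}(V) \otimes \wedge^{2^i}(V^*)$. Since $\gamma$ has weight $0$ it is orthogonal to all nonzero weight spaces, and by Lemma \ref{weightzerolemmaA} it remains to compute $\langle \gamma, \delta \rangle$ for the zero-weight generators $\delta$. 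Expanding $\delta$ as a signed sum of basis vectors $(e_{f_1} \wedge \cdots) \otimes (e_{f_1}^* \wedge \cdots)$ and using that $\langle -,- \rangle$ takes values in $\{0,1\}$ on such basis vectors in characteristic $2$, $\langle \gamma, \delta \rangle$ equals, mod $2$, the number of these summands whose indices all lie in $\{1, \ldots, n-t\}$. As the $2^i$ indices $j_r$ are distinct and $|\{n-t+1, \ldots, n\}| = t < 2^i$, at least one $j_r \leq n-t$; so either some $k_r > n-t$, in which case there are no such summands, or every $k_r \leq n-t$, in which case the count is $2^q$ with $q \geq 1$. Either way $\langle \gamma, \delta \rangle = 0$, so $\gamma \in \rad V(\omega_{2^i}+\omega_{n-2^i})$.

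Part (iii) then transfers verbatim from Lemma \ref{mainlemma}(iii): by Corollary \ref{extcorollaryA}, $\operatorname{H}^1(G, L(\omega_{2^i}+\omega_{n-2^i})) \neq 0$, so by \cite[II.2.13, II.2.14]{JantzenBook} there is $M \subseteq \rad V(\omega_{2^i}+\omega_{n-2^i})$ with $\rad V(\omega_{2^i}+\omega_{n-2^i})/M \cong \fieldsymbol$; multiplicity-one (Theorem \ref{typeAweylcf}) forces every composition factor of $M$ to be nontrivial, and then Corollary \ref{nontrivialcfweylA} with Lemma \ref{restrictionlemmaA}(ii) shows $M \downarrow A_{l-t}$ has no trivial composition factor. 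Since $\gamma$ is $A_{l-t}$-fixed and nonzero, $\gamma \notin M$, whence $\rad V(\omega_{2^i}+\omega_{n-2^i}) = \langle \gamma \rangle \oplus M$ as $A_{l-t}$-modules. For a nonzero $G$-invariant quadratic form $Q$ on $V(\omega_{2^i}+\omega_{n-2^i})$, composing $Q$ with the square-root map on $\fieldsymbol$ gives — using $\rad b_Q = \rad V(\omega_{2^i}+\omega_{n-2^i})$ from Proposition \ref{char2quad}(ii) — a $G$-module morphism $\rad V(\omega_{2^i}+\omega_{n-2^i}) \to \fieldsymbol$, which must vanish on $M$; therefore $Q(c\gamma + m) = c^2 Q(\gamma)$ for all $m \in M$, $c \in \fieldsymbol$, so $Q$ vanishes on $\rad V(\omega_{2^i}+\omega_{n-2^i})$ if and only if $Q(\gamma) = 0$. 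By Proposition \ref{char2quad}(iii), $L(\omega_{2^i}+\omega_{n-2^i})$ is orthogonal if and only if $Q(\gamma) = 0$.
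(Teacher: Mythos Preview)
Your proof is correct and follows the same approach as the paper, which simply says ``Same as Lemma \ref{mainlemma}'' for each part and lists the analogous ingredients (Lemma \ref{wedgefixedpointA}, Lemma \ref{weyllemmaA}, Corollary \ref{trivialsubmoduleA} for (i); Lemma \ref{weightzerolemmaA} and the pairing computation for (ii); Theorem \ref{extcorollaryA}, Theorem \ref{typeAweylcf}, Corollary \ref{nontrivialcfweylA}, Lemma \ref{restrictionlemmaA} for (iii)). Your observation in part (i) --- that unlike the type-$C$ situation the highest weight vector $w^+$ for the $A_{l-t}$-Weyl module differs from the ambient $v^+$, so one must first exhibit $w^+ \in \mathscr{U}_\Z v^+$ via lowering operators $E_{c,c'}$ with $c > c' > 2^i$ --- is a genuine detail that the paper suppresses; your argument for it is sound and makes the transfer from Lemma \ref{mainlemma} honest.
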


\begin{proof}

\begin{enumerate}[(i)]

\item Same as Lemma \ref{mainlemma} (i). Apply Lemma \ref{wedgefixedpointA}, Lemma \ref{weyllemmaA} and Corollary \ref{trivialsubmoduleA}.


\item Same as Lemma \ref{mainlemma} (ii). Apply Lemma \ref{weightzerolemmaA}, and note that for $b = (e_{j_1} \wedge \cdots \wedge e_{j_{2^i}}) \otimes (e_{j_1}^* \wedge \cdots \wedge e_{j_{2^i}}^*)$ and $b' = (e_{k_1} \wedge \cdots \wedge e_{k_{2^i}}) \otimes (e_{k_1}^* \wedge \cdots \wedge e_{k_{2^i}}^*)$ we have $$\langle b, b' \rangle = \begin{cases} 1, & \mbox{if } \{j_1, \ldots, j_{2^i}\} = \{k_1, \ldots, k_{2^i}\}. \\ 0, & \mbox{otherwise.} \end{cases}$$

\item Same as Lemma \ref{mainlemma} (iii). Apply Theorem \ref{extcorollaryA} to find a submodule $M \subseteq \rad V(\omega_{2^i} + \omega_{n-2^i})$ such that $\rad V(\omega_{2^{i+1}}) / M \cong \fieldsymbol$. Each composition factor of $V(\omega_{2^i} + \omega_{n-2^i})$ occurs with multiplicity one by Theorem \ref{typeAweylcf}, so $M$ has no nontrivial composition factors. By Lemma \ref{restrictionlemmaA} (i) the restriction $M \downarrow A_{l-t}$ has no trivial composition factors, and by (i) the vector $\gamma$ is fixed by $A_{l-t}$. Thus $\gamma \not\in M$ and then $\rad V(\omega_{2^{i}} + \omega_{n-2^i}) = \langle \gamma \rangle \oplus M$ as $A_{l-t}$-modules. As in Lemma \ref{mainlemma} (iii), we see that $L(\omega_{2^i} + \omega_{n-2^i})$ is orthogonal if and only if $Q(\gamma) = 0$ for a nonzero $G$-invariant quadratic form $Q$ on $V(\omega_{2^{i}} + \omega_{n-2^i})$.

\end{enumerate}
\end{proof}

\subsection{Computation of a quadratic form $Q$ on $V(\omega_r + \omega_{n-r})$}

\noindent To finish the proof of Theorem \ref{quadpropA} we still have to compute $Q(\gamma)$ for the vector $\gamma$ from Lemma \ref{mainlemmaA}.

We retain the notation and assumptions from the previous subsection. Let $1 \leq k < n-k \leq l$. Now the form $\langle -,- \rangle_\Z$ on $\wedge^k(V_\Z) \otimes \wedge^k(V_\Z^*)$ induces a quadratic form $q_\Z$ on $\wedge^k(V_\Z) \otimes \wedge^k(V_\Z^*)$ by $q_\Z(x) = \langle x, x \rangle$. We will use this form to find a nonzero $G$-invariant quadratic form on $V(\omega_k + \omega_{n-k}) = V(\omega_k + \omega_{n-k})_\Z \otimes_\Z \fieldsymbol$.

\begin{lemma}
We have $q_\Z(V(\omega_k + \omega_{n-k})_\Z) \subseteq 2 \Z$ and $q_\Z(V(\omega_k + \omega_{n-k})_\Z) \not\subseteq 4 \Z$.
\end{lemma}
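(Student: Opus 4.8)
The plan is to mimic closely the proof of Lemma \ref{quadevenlemma} from Section \ref{typeCsection}, since the structure of the situation is identical: we have an integral lattice $V(\omega_k + \omega_{n-k})_\Z$ carrying a symmetric form $\langle -,- \rangle_\Z$ whose reduction mod $2$ must vanish (because the invariant quadratic form on the Weyl module has polarization $\langle -,- \rangle$ and that polarization cannot be identically zero by Proposition \ref{char2quad}).

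First I would exhibit two explicit vectors in $V(\omega_k + \omega_{n-k})_\Z$ whose mixed pairing is $1$ but whose self-pairings are $0$, in order to show $q_\Z(V(\omega_k + \omega_{n-k})_\Z) \not\subseteq 4\Z$. The natural candidates are $\alpha = (e_1 \wedge \cdots \wedge e_k) \otimes (e_n^* \wedge e_{n-1}^* \wedge \cdots \wedge e_{n-k+1}^*) = v^+$ and $\beta = (e_{n-k+1} \wedge \cdots \wedge e_n) \otimes (e_k^* \wedge e_{k-1}^* \wedge \cdots \wedge e_1^*)$, which is the lowest weight vector. Both lie in $V(\omega_k + \omega_{n-k})_\Z = \mathscr{U}_\Z v^+$ (the lowest weight vector is obtained from $v^+$ by applying divided powers of negative root vectors). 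Using the explicit formula $\langle b, b' \rangle = \pm 1$ exactly when $\{i_1,\ldots,i_k\} = \{j_1',\ldots,j_k'\}$ and $\{i_1',\ldots,i_k'\} = \{j_1,\ldots,j_k\}$, one checks $\langle \alpha, \alpha \rangle = \langle \beta, \beta \rangle = 0$ (the index sets do not match up, since $k < n-k$) while $\langle \alpha, \beta \rangle = \pm 1$. Hence $q_\Z(\alpha + \beta) = \pm 2$, so $q_\Z(V(\omega_k + \omega_{n-k})_\Z) \not\subseteq 4\Z$.

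For the inclusion $q_\Z(V(\omega_k + \omega_{n-k})_\Z) \subseteq 2\Z$, I would argue exactly as in Lemma \ref{quadevenlemma}: if it failed, then $Q = q_\Z \otimes_\Z \fieldsymbol$ would define a nonzero $G$-invariant quadratic form on $V(\omega_k + \omega_{n-k})$, but its polarization would be $2 \langle -,- \rangle = 0$ in characteristic two, contradicting Proposition \ref{char2quad} (i)-(ii), which says the Weyl module has a nonzero invariant quadratic form with polarization equal to the unique (up to scalar) invariant bilinear form, which is non-degenerate hence nonzero. One should first note that Proposition \ref{char2quad} applies: $\omega_k + \omega_{n-k}$ is self-dual, nonzero, and not $\omega_1$ in type $C$ (we are in type $A$), so parts (i) and (ii) hold. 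This yields $q_\Z(V(\omega_k + \omega_{n-k})_\Z) \subseteq 2\Z$.

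The main obstacle, and the only place requiring genuine care, is the verification that $\alpha$ and $\beta$ actually lie in the integral Weyl module lattice and that their self-pairings vanish; the latter is a direct combinatorial check with the displayed formula for $\langle b, b'\rangle$, and the former is immediate for $\alpha = v^+$ and follows for $\beta$ because $\beta$ is (up to sign) the image of $v^+$ under the product of divided powers of negative simple root vectors realizing the longest Weyl group element, which preserves the Kostant $\Z$-form lattice. Everything else is a verbatim transcription of the type $C_l$ argument.
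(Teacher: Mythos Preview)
Your proposal is correct and follows essentially the same approach as the paper: the paper's proof simply says ``Same as Lemma \ref{quadevenlemma}'' with the explicit choices $\alpha = (e_1 \wedge \cdots \wedge e_k) \otimes (e_{n}^* \wedge e_{n-1}^* \wedge \cdots \wedge e_{n-k+1}^*)$ and $\beta = (e_{n} \wedge e_{n-1} \wedge \cdots \wedge e_{n-k+1}) \otimes (e_1^* \wedge \cdots \wedge e_k^*)$, which agree (up to sign) with your $\alpha$ and $\beta$. Your additional remarks justifying that $\beta$ lies in the integral lattice and that Proposition \ref{char2quad} applies are helpful elaborations the paper leaves implicit.
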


\begin{proof}Same as Lemma \ref{quadevenlemma}, but with $\alpha = (e_1 \wedge \cdots \wedge e_k) \otimes (e_{n}^* \wedge e_{n-1}^* \wedge \cdots \wedge e_{n-k+1}^*)$ and $\beta = (e_{n} \wedge e_{n-1} \wedge \cdots \wedge e_{n-k+1}) \otimes (e_1^* \wedge \cdots \wedge e_k^*)$.\end{proof}

Therefore $Q = \frac{1}{2} q_\Z \otimes_\Z \fieldsymbol$ defines a nonzero $G$-invariant quadratic form on $V(\omega_k + \omega_{n-k})$ with polarization $\langle -,- \rangle$. As in Section \ref{typeCquadcomp}, we have $Q(\gamma) = \frac{1}{2} \binom{n-t}{2^i}$ for the vector $\gamma \in \wedge^{2^i}(V) \otimes \wedge^{2^i}(V^*)$ from Lemma \ref{mainlemmaA}. Finally applying Lemma \ref{binomiallemma} completes the proof Theorem \ref{quadpropA}.

\section{Simple groups of exceptional type}\label{exceptionalsection}

\noindent In this section, let $G$ be a simple group of exceptional type and assume that $\Ch \fieldsymbol  = 2$. We will give some results about the orthogonality of irreducible representations of $G$. For $G$ of type $G_2$ or $F_4$ we give a complete answer. For types $E_6$, $E_7$, and $E_8$ we only have results for some specific representations, given in Table \ref{table:typeE} below and proven at the end of this section. For irreducible representations occurring in the adjoint representation of $G$, answers were given earlier by Gow and Willems in \cite[Section 3]{GowWillems1}.

\begin{prop}\label{prop:g2}
Let $G = G_2$ and let $V$ be a non-trivial irreducible representation of $G$. Then $V$ is not orthogonal if and only if $V$ is a Frobenius twist of $L_G(\omega_1)$.
\end{prop}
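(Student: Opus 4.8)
The plan is to reduce, via Steinberg's tensor product theorem and Remark~\ref{restrictedremark}, to the case where $V = L_G(\lambda)$ for a $2$-restricted $\lambda \in X(T)^+$, i.e.\ $\lambda \in \{0,\ \omega_1,\ \omega_2,\ \omega_1+\omega_2\}$ since $p=2$ and $G$ has rank $2$. The trivial module is excluded by hypothesis, so I must settle the orthogonality of $L_G(\omega_1)$, $L_G(\omega_2)$ and $L_G(\omega_1+\omega_2)$. In type $G_2$ every irreducible module is self-dual (see Table~\ref{dualitytable}, where $d(\lambda)\equiv 0$ always), so Proposition~\ref{char2quad} applies to each of these, and the task becomes a computation inside the corresponding Weyl modules $V_G(\omega_1)$, $V_G(\omega_2)$, $V_G(\omega_1+\omega_2)$.

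First I would treat $L_G(\omega_2)$, the $7$-dimensional module (in characteristic $2$ this is the composition factor of the $8$-dimensional spin/Weyl module, or one can use that $G_2 < B_3$ with $L_{G_2}(\omega_2)$ the natural module restricted). Using the standard $7$-dimensional realization one exhibits an explicit non-degenerate $G$-invariant quadratic form, so $L_G(\omega_2)$ is orthogonal; alternatively, $V_G(\omega_2)$ has no trivial composition factor (its composition factors are $L(\omega_2)$ and possibly $L(0)$ only when the extra condition fails — and in fact $H^1(G,L(\omega_2)) = 0$ by the known cohomology, e.g.\ Kleshchev--Sheth type results for $G_2$), so Proposition~\ref{char2quad}(iv) gives orthogonality directly. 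Next, for $L_G(\omega_1+\omega_2)$: I would note that $\omega_1+\omega_2$ is the highest weight of the adjoint-related module, and cite \cite[Section~3]{GowWillems1} where Gow and Willems handle the irreducibles occurring in the adjoint representation; alternatively one checks $H^1(G,L(\omega_1+\omega_2)) = 0$ and that $V_G(\omega_1+\omega_2)$ has no trivial composition factor, so Proposition~\ref{char2quad}(iv) again gives that $L_G(\omega_1+\omega_2)$ is orthogonal. In every case except $\omega_1$ the conclusion will be that $V$ is orthogonal.

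Finally, for $L_G(\omega_1)$ — the $6$-dimensional module in characteristic $2$, which is $V_G(\omega_1) = L_G(\omega_1)$ since $\omega_1$ is minuscule-like here (more precisely $V_G(\omega_1)$ is $7$-dimensional with $L_G(\omega_1)$ of dimension $6$) — I would use the exceptional isogeny $G_2 \to G_2$ in characteristic $2$ (analogous to the $B_l \leftrightarrow C_l$ situation) which swaps the two fundamental modules up to Frobenius twist, together with the fact that $L_G(\omega_1)$ has dimension $6$ and carries a $G$-invariant alternating form that is \emph{not} the polarization of any quadratic form. Concretely, one shows $\operatorname{H}^1(G,L(\omega_1)) \neq 0$ (there is a non-split extension $0 \to k \to V_G(\omega_1) \to L_G(\omega_1) \to 0$ with $V_G(\omega_1)$ of dimension $7$), and then that the unique (up to scalar) invariant quadratic form $Q$ on $V_G(\omega_1)$ does not vanish on the radical, so by Proposition~\ref{char2quad}(iii) $L_G(\omega_1)$ is not orthogonal. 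This non-vanishing is the one genuine computation and is the main obstacle: it amounts to checking, in an explicit $7$-dimensional basis, that the radical vector of the invariant bilinear form has nonzero $Q$-value — a short but essential calculation. Combining all cases: $V$ is non-orthogonal precisely when $V$ is a Frobenius twist of $L_G(\omega_1)$, which is the assertion.
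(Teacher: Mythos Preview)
Your overall scaffold---reduce via Remark~\ref{restrictedremark} to the three nonzero $2$-restricted weights and then invoke Proposition~\ref{char2quad}---matches the paper, but several details are off and the treatment of $L_G(\omega_1)$ diverges from the paper's.

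First, you have the dimensions confused: in the Bourbaki labeling used here, $L_G(\omega_1)$ is the ($6$-dimensional, in characteristic $2$) ``natural'' module and $L_G(\omega_2)$ is the $14$-dimensional adjoint module; there is no $7$- or $8$-dimensional irreducible labeled $\omega_2$. The paper dispatches $\omega_2$ and $\omega_1+\omega_2$ in one stroke by observing the stronger fact $V_G(\lambda)=L_G(\lambda)$ for both (the second is the Steinberg module), so Proposition~\ref{char2quad}(iv) applies immediately; no appeal to $H^1$ computations or Gow--Willems is needed. Your alternative route via ``no trivial composition factor'' reaches the same conclusion, but you should state the correct reason.

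Second, there is no exceptional isogeny of $G_2$ in characteristic $2$; that isogeny exists only in characteristic $3$, so that part of your plan for $\omega_1$ cannot work. Your fallback---verify directly that the invariant quadratic form on the $7$-dimensional $V_G(\omega_1)$ is nonzero on the $1$-dimensional radical---is valid and would succeed, but the paper avoids this computation entirely. It offers three alternatives for showing $L_G(\omega_1)$ is not orthogonal: a direct check (small dimension), the Sin--Willems criterion via $H^1(G,\wedge^2 V)=0$ (since $\wedge^2 V$ has composition factors $L_G(\omega_2)$ and $L_G(0)$ only), or the observation that a regular unipotent acts on $V$ with a single Jordan block, which is impossible inside $\SO(V)$. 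Each of these is shorter than the Weyl-module computation you propose.
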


\begin{proof}
In view of Remark \ref{restrictedremark}, it will be enough to consider $V = L_G(\lambda)$ with $\lambda \in X(T)^+$ a $2$-restricted dominant weight. If $\lambda = \omega_2$ or $\lambda = \omega_1 + \omega_2$, then $V_G(\lambda) = L_G(\lambda)$ and so $V$ is orthogonal by Proposition \ref{char2quad}. 

What remains is to show that $V = L_G(\omega_1)$ not orthogonal. There are several ways to see this, for example since $\dim V = 6$ this could be done by a direct computation. Alternatively, note that the composition factors of $\wedge^2(V)$ are $L_G(\omega_2)$ and $L_G(0)$ \cite[Proposition 2.10]{LiebeckSeitzReductive}, so $\operatorname{H}^1(G, \wedge^2(V)) = 0$. Then by \cite[Proposition 2.7]{SinWillems}, the module $V$ is not orthogonal. For a third proof, note that the action of a regular unipotent $u \in G$ on $V$ has a single Jordan block \cite[Theorem 1.9]{Suprunenko}, but no such element exists in $\SO(V)$ \cite[Proposition 6.22]{LiebeckSeitzClass}.\end{proof}

The following lemma will be useful throughout this section to show that certain representations are orthogonal.

\begin{lemma}\label{lemma:altsquarecombined}
Let $V$ be a nontrivial, self-dual and irreducible $G$-module. Suppose that one of the following holds:

\begin{enumerate}[\normalfont (i)]
\item $\dim V \equiv 2 \mod{4}$, and $\wedge^2(V)$ has exactly one trivial composition factor as a $G$-module.
\item $\dim V \equiv 0 \mod{8}$, and $\wedge^2(V)$ has exactly two trivial composition factors as a $G$-module.
\end{enumerate}

Then any nontrivial composition factor of $\wedge^2(V)$ occuring with odd multiplicity is an orthogonal $G$-module.
\end{lemma}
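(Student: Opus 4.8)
The plan is to exploit the general principle that $V \otimes V \cong S^2(V) \oplus \wedge^2(V)$ when $p = 2$ fails — rather, in characteristic $2$ one only has a filtration with $\wedge^2(V)$ a submodule of $V \otimes V$ and $S^2(V)$ or the Frobenius twist $V^{[1]}$ as the relevant quotient. The cleaner route here is to use Lemma \ref{tensorlemma}: since $V$ is self-dual, nontrivial and irreducible, it is symplectic for $G$ by Lemma \ref{fonglemma}, so $V \otimes V \cong V \otimes V^*$ is orthogonal for $G$. Now $\wedge^2(V)$ is a submodule (and, dually, a quotient) of $V \otimes V$, and an invariant quadratic form on $V \otimes V$ restricts to an invariant quadratic form on $\wedge^2(V)$; the point is to control its radical. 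The strategy is therefore: (1) produce a nonzero $G$-invariant quadratic form $Q$ on $\wedge^2(V)$ whose polarization $b_Q$ is non-degenerate, equivalently identify $\wedge^2(V)$ with its dual via the form coming from the symplectic form on $V$, exactly as in the constructions of Sections \ref{typeCsection} and the $A_l$ section; (2) understand the radical $\rad b_Q$, which will be governed by the trivial composition factors and whether the form on $V$ interacts with them.

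More concretely, first I would set up the $G$-invariant symmetric bilinear form $\langle-,-\rangle$ on $\wedge^2(V)$ induced by a non-degenerate $G$-invariant symplectic form $(-,-)$ on $V$ (which exists by Lemma \ref{fonglemma}), via $\langle v_1\wedge v_2, w_1\wedge w_2\rangle = \det((v_i,w_j))$, as in Section \ref{weylconstructionC}. This form is non-degenerate on $\wedge^2(V)$. Then, mimicking Lemma \ref{quadevenlemma}, I would show that over a suitable $\Z$-form the quadratic form $x \mapsto \langle x,x\rangle$ takes values in $2\Z$ but not $4\Z$ on the relevant lattice, so that $Q = \tfrac12 \langle-,-\rangle$ descends to a nonzero $G$-invariant quadratic form on $\wedge^2(V)$ with polarization $\langle-,-\rangle$; hence $b_Q$ is non-degenerate, i.e. $\rad b_Q = 0$. (Some care is needed because $V$ here is an arbitrary self-dual irreducible, not a Weyl module with a distinguished integral structure; I would instead argue abstractly: $\wedge^2(V)$ self-dual via $\langle-,-\rangle$, and the existence of the quadratic form refining it follows from Lemma \ref{tensorlemma} applied to $V\otimes V$ together with the fact that $\wedge^2$ is a direct summand functor "up to the square" — alternatively one invokes \cite[Proposition 9.2]{GaribaldiNakano} directly: $\wedge^2(V)$ is orthogonal.)

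Granting that $\wedge^2(V)$ carries a $G$-invariant quadratic form $Q$ with $\rad b_Q = 0$, the argument then runs parallel to Lemma \ref{mainlemma}(iii) and Proposition \ref{char2quad}. Decompose $\wedge^2(V)$ into indecomposable blocks; since $\rad b_Q = 0$, the form $Q$ is non-degenerate, so $\wedge^2(V)$ is an orthogonal $G$-module. Now restrict attention to the trivial composition factors: in case (i) there is exactly one, in case (ii) exactly two. Let $W$ be any nontrivial composition factor occurring with odd multiplicity; I want to show $W$ is orthogonal. The key reduction: in an orthogonal module, the composition factors that are \emph{not} self-dual pair off, and among the self-dual ones the non-orthogonal ones contribute in pairs (by Lemma \ref{tensorlemma}-type parity, or by the structure theory of orthogonal modules — cf. \cite[Section 2]{SinWillems} or the argument in \cite[Proposition 2.7]{SinWillems}). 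The dimension congruence $\dim V \equiv 2 \pmod 4$ forces $\dim \wedge^2(V) = \binom{\dim V}{2} \equiv 1 \pmod 2$ to be odd — wait, more usefully, it pins down the parity count of trivial factors against orthogonality. I would use: the number of non-orthogonal self-dual composition factors of an orthogonal module, counted with multiplicity, is even; the trivial module $L(0)$ is non-orthogonal precisely in these dimension-parity situations (one trivial factor in case (i) with $\dim\wedge^2 V$ behaving one way, two in case (ii)), and the hypothesis says the trivial factor multiplicity is exactly the minimum forced by the dimension parity. Therefore the trivial factors "use up" exactly the allowed parity budget, leaving \emph{no} room for a nontrivial non-orthogonal self-dual factor of odd multiplicity — so $W$ must be orthogonal.

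The main obstacle, and the place where I would spend the most care, is making the parity bookkeeping rigorous: precisely, I would need the statement that if $M$ is an orthogonal $G$-module then $\bigoplus_{W} W^{\oplus m_W}$, over the non-orthogonal self-dual irreducibles $W$ with $m_W$ their multiplicity in $M$, has $\sum_W m_W \dim W \equiv 0$ modulo something — or, cleaner, the statement from the literature (e.g. \cite[Proposition 2.7]{SinWillems}, \cite{GowWillems1}) that orthogonality of $M$ is detected by a sum of local contributions with a definite parity. The delicate point is that "occurring with odd multiplicity" in the hypothesis is exactly what lets one isolate $W$: pairs cancel, and the dimension congruence fixes the trivial-factor parity so that $W$ cannot be the odd one out. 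I would likely phrase the final step as: counting trivial composition factors of $\wedge^2(V)$ modulo $2$ against $\binom{\dim V}{2} \bmod 2$ (or $\bmod 4$), the hypothesis is equivalent to saying the "defect" of orthogonality is already accounted for, forcing every odd-multiplicity nontrivial factor to be orthogonal by \cite[Proposition 2.7]{SinWillems} or an analogue. That reduction from global orthogonality of $\wedge^2(V)$ to orthogonality of the individual odd-multiplicity constituents is the crux; everything else is the routine construction of the form and a dimension count.
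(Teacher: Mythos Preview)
Your proposal has a genuine gap, and in fact a concrete error in case (i). You claim to produce a $G$-invariant quadratic form $Q$ on $\wedge^2(V)$ with $\rad b_Q = 0$. But when $\dim V \equiv 2 \pmod 4$, say $\dim V = 4k+2$, we have $\dim \wedge^2(V) = (2k+1)(4k+1)$, which is \emph{odd}; since $b_Q$ is alternating in characteristic $2$, it is automatically degenerate on an odd-dimensional space, so $\rad b_Q \neq 0$. More importantly, even granting that $\wedge^2(V)$ is orthogonal, the passage to individual composition factors is exactly where the difficulty lies: the relevant tool (\cite[Lemma 1.3]{GowWillems1}) says that in an orthogonal module \emph{with no trivial composition factors}, every odd-multiplicity composition factor is orthogonal. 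The hypothesis ``no trivial composition factors'' is essential --- indeed, Proposition \ref{char2quad}(iii) already exhibits orthogonal modules (Weyl modules) whose unique nontrivial top factor is \emph{not} orthogonal, precisely because a trivial factor sits underneath. Your parity bookkeeping tries to circumvent this, but the trivial module is itself orthogonal (the form $c \mapsto c^2$ is non-degenerate), so it does not ``use up'' any non-orthogonality budget; the argument as sketched does not go through, and I do not see how Sin--Willems Proposition 2.7 helps here.

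The idea you are missing is to exploit the ambient symplectic group. Since $V$ is symplectic, $G$ sits inside $\Sp(V)$, and $\wedge^2(V)$ has a known structure as an $\Sp(V)$-module: under the dimension hypotheses one can peel off (as a direct summand in case (i), or as a subquotient in case (ii)) the irreducible $\Sp(V)$-module $L_{\Sp(V)}(\omega_2)$. The point of the congruences on $\dim V$ is twofold: they guarantee via Example \ref{exampleC2} that $L_{\Sp(V)}(\omega_2)$ is orthogonal for $\Sp(V)$ (hence for $G$), and the hypothesis on the number of trivial $G$-composition factors of $\wedge^2(V)$ says exactly that all of them are accounted for by the pieces removed, so that $L_{\Sp(V)}(\omega_2)$ has \emph{no} trivial composition factors as a $G$-module. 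Now \cite[Lemma 1.3]{GowWillems1} applies cleanly to this subquotient, and every nontrivial composition factor of $\wedge^2(V)$ already occurs in it with the same multiplicity.
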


\begin{proof}
Since $V$ is nontrivial, we can assume $G < \Sp(V)$ by Lemma \ref{fonglemma}. If (i) holds, then by applying results in Section \ref{weylconstructionC} (or \cite[Lemma 4.8.2]{McNinch}) we can find a vector $\gamma \in \wedge^2(V)$ such that $\wedge^2(V) = Z \oplus \langle \gamma \rangle$ as an $\Sp(V)$-module. Here $Z$ is irreducible of highest weight $\omega_2$ for $\Sp(V)$, so by Proposition \ref{fundamentalquadC} (see Example \ref{exampleC2}) the module $Z$ is orthogonal for $\Sp(V)$. Therefore $Z$ is an orthogonal $G$-module with no trivial composition factors. From this \cite[Lemma 1.3]{GowWillems1} shows that any composition factor of $Z$ with odd multiplicity is an orthogonal $G$-module. 

In case (ii), the assumption on $\dim V$ implies (for example by \cite[Lemma 4.8.2]{McNinch}) that there exist $\Sp(V)$-submodules $Z' \subseteq Z \subseteq \wedge^2(V)$ such that $\dim Z' = \dim \wedge^2(V) / Z = 1$. Furthermore, $Z/Z'$ is an irreducible $\Sp(V)$-module with highest weight $\omega_2$, so by Proposition \ref{fundamentalquadC} (see Example \ref{exampleC2}) the module $Z/Z'$ is orthogonal for $\Sp(V)$. Therefore $Z/Z'$ is an orthogonal $G$-module with no trivial composition factors, so by \cite[Lemma 1.3]{GowWillems1} any composition factor of $Z/Z'$ with odd multiplicity is an orthogonal $G$-module.\end{proof}

\begin{prop}\label{F4result}
Let $G = F_4$ and let $V$ be a non-trivial irreducible representation of $G$. Then $V$ is orthogonal.
\end{prop}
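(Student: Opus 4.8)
The plan is to mimic the strategy used for $G_2$ in Proposition~\ref{prop:g2}: by Remark~\ref{restrictedremark} it suffices to treat $V = L_G(\lambda)$ with $\lambda$ a $2$-restricted dominant weight, and the $2$-restricted weights for $F_4$ are $\lambda \in \{0, \omega_1, \omega_2, \omega_3, \omega_4, \omega_1+\omega_2, \dots\}$, a finite list (there are $2^4 = 16$ of them). For each such $\lambda$ I need to show $L_G(\lambda)$ is orthogonal. Several of these will be handled immediately by Proposition~\ref{char2quad}(iv): whenever the Weyl module $V_G(\lambda)$ has no trivial composition factor, $L_G(\lambda)$ is orthogonal. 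In particular if $V_G(\lambda) = L_G(\lambda)$ (i.e.\ the Weyl module is irreducible) we are done; this disposes of the minuscule-like cases and, more generally, of every $\lambda$ for which $0 \notin J(\lambda)$. The composition factors of the relevant Weyl modules for $F_4$ in characteristic $2$ are available in the literature (e.g.\ Lübeck's tables), so I would tabulate which $\lambda$ have a trivial Weyl-module composition factor; the small number of remaining weights are the only ones requiring real work.

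For the leftover weights — the ones where $V_G(\lambda)$ does contain a trivial composition factor — I would use Lemma~\ref{lemma:altsquarecombined} together with known decompositions of $\wedge^2(L_G(\mu))$ for a suitably chosen smaller representation $\mu$. Concretely: $L_G(\omega_4)$ is the $26$-dimensional (in characteristic $2$, $25$- or $26$-dimensional) minimal module; $L_G(\omega_1)$ is the $52$-dimensional adjoint module (for which Gow--Willems \cite{GowWillems1} already give the answer, as noted in the paragraph preceding the proposition); and the other problematic weights such as $\omega_2$, $\omega_3$ typically appear as composition factors of $\wedge^2$ of the small module or of a tensor product of small modules. So the key steps are: (1) reduce to $2$-restricted $\lambda$; (2) list the $2$-restricted $\lambda$ and, using the known character/composition-factor data, isolate those with an irreducible Weyl module (done by Proposition~\ref{char2quad}(iv)); (3) for the few remaining $\lambda$, exhibit $L_G(\lambda)$ as a composition factor of odd multiplicity inside $\wedge^2(W)$ for a self-dual irreducible $W$ whose dimension is $\equiv 2 \pmod 4$ or $\equiv 0 \pmod 8$ and whose $\wedge^2$ has the required number of trivial composition factors, and apply Lemma~\ref{lemma:altsquarecombined}; (4) cite \cite{GowWillems1} for any weight occurring in the adjoint representation.

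The main obstacle I anticipate is step~(3): verifying the precise hypotheses of Lemma~\ref{lemma:altsquarecombined} for the chosen $W$, namely that $\wedge^2(W)$ has \emph{exactly} one (resp.\ exactly two) trivial composition factors and that the target $L_G(\lambda)$ occurs in $\wedge^2(W)$ with \emph{odd} multiplicity. This requires accurate composition-factor multiplicities for $\wedge^2$ of the relevant $F_4$-modules in characteristic $2$, which one gets from the literature (decomposition matrices, or Liebeck--Seitz-style tables, or direct computation); the dimension-congruence conditions mod $4$ and mod $8$ also have to be checked, and if the small module has the ``wrong'' dimension mod $4$ one may need to pass to a tensor square or to a different auxiliary module. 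A secondary, more routine concern is making sure the finitely many cases genuinely cover all $16$ restricted weights and that none slips through; this is bookkeeping but must be done carefully. Once these composition-factor facts are in hand, the argument is a short case check, entirely parallel to the $G_2$ case, and I would present it as such — handling the ``Weyl module irreducible'' weights in one sentence each and spending the bulk of the proof on the two or three genuinely interesting weights.
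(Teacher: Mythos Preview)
Your toolkit---reduce to $2$-restricted weights via Remark~\ref{restrictedremark}, dispose of the easy cases with Proposition~\ref{char2quad}(iv), and attack the remainder with Lemma~\ref{lemma:altsquarecombined}---is exactly what the paper uses. What you are missing is the one idea that makes the argument short: the exceptional isogeny $\tau: F_4 \to F_4$. Twisting by $\tau$ sends $L_G(a_1\omega_1 + a_2\omega_2 + a_3\omega_3 + a_4\omega_4)$ to $L_G(a_4\omega_1 + a_3\omega_2) \otimes L_G(a_2\omega_3 + a_1\omega_4)^F$, so by Lemma~\ref{tensorlemma} every $2$-restricted $\lambda$ with both $(a_1,a_2) \neq (0,0)$ and $(a_3,a_4) \neq (0,0)$ is orthogonal for free, and the remaining cases reduce to $\lambda \in \{\omega_3, \omega_4, \omega_3 + \omega_4\}$. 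Of these, $V_G(\omega_4)$ and $V_G(\omega_3+\omega_4)$ are irreducible, and only $\omega_3$ needs Lemma~\ref{lemma:altsquarecombined} (applied to $W = L_G(\omega_4)$, which is $26$-dimensional, not $25$).

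Without the isogeny you are committed to a case check over all fifteen nontrivial $2$-restricted weights, and your own anticipated obstacle in step~(3) is real: for weights involving $\omega_2$ (e.g.\ $\omega_2$, $\omega_1+\omega_2$, $\omega_2+\omega_3$, $\omega_1+\omega_2+\omega_3+\omega_4$) the Weyl modules have dimensions in the thousands, the composition-factor data are not all in the standard tables, and exhibiting each such $L_G(\lambda)$ as an odd-multiplicity constituent of some $\wedge^2(W)$ satisfying the exact hypotheses of Lemma~\ref{lemma:altsquarecombined} is not something you can expect to carry out by hand. (Incidentally, $L_G(\omega_1)$ is $26$-dimensional in characteristic~$2$, not $52$; the Weyl module $V_G(\omega_1)$ has $L_G(\omega_4)$ as a composition factor.) So your plan is not wrong in spirit, but as stated it is a proposal for a long computation rather than a proof, and the paper's isogeny trick is precisely the missing step that turns it into one.
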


\begin{proof}
Let $\tau: G \rightarrow G$ be the exceptional isogeny of $G$ as given in \cite[Theorem 28]{SteinbergNotes}. Then $L_G(a_1\omega_1 + a_2 \omega_2 + a_3 \omega_3 + a_4 \omega_4)^\tau \cong L_G(a_4 \omega_1 + a_3 \omega_2 + 2 a_2 \omega_3 + 2 a_1 \omega_4)$, and by Steinberg's tensor product theorem this is isomorphic to $L_G(a_4 \omega_1 + a_3 \omega_2) \otimes L(a_2 \omega_3 + a_1 \omega_4)^{F}$. Thus by lemmas \ref{fonglemma} and \ref{tensorlemma}, it is enough to prove the claim in the case where $V = L_G(\lambda)$ with $\lambda = a_3 \omega_3 + a_4 \omega_4$ a $2$-restricted dominant weight. Now for $\lambda = \omega_4$ and $\lambda = \omega_3 + \omega_4$, we have $V_G(\lambda) = L_G(\lambda) = V$ and so $V$ is orthogonal by Proposition \ref{char2quad} (iv)\footnote{One can also construct a non-degenerate $G$-invariant quadratic form on $L_G(\omega_4)$ explicitly by realizing it as the space of trace zero elements in the Albert algebra. The details of this construction can be found in \cite[4.8.4, pg. 151-152]{Wilson}.}. 

What remains is to show that $L_G(\omega_3)$ is orthogonal. Let $W = L_G(\omega_4)$. Now $\dim W = 26$, so by Lemma \ref{lemma:altsquarecombined} (i), it will be enough to prove that $\wedge^2(W)$ has exactly one trivial composition factor and that $L_G(\omega_3)$ occurs in $\wedge^2(W)$ with odd multiplicity.

We have $V_G(\omega_4) = L_G(\omega_4)$ and then by a computation with Magma \cite{MAGMA} (or \cite[7.4.3, pg. 98]{Donkin}) the $G$-character of $\wedge^2(W)$ is given by $\ch \wedge^2(W) = \ch V_G(\omega_1) + \ch V_G(\omega_3)$. Furthermore, from the data in \cite{LubeckWebsite}, we can deduce $V_G(\omega_1) = L_G(\omega_1) / L_G(\omega_4)$ and $V_G(\omega_3) = L_G(\omega_3) / L_G(\omega_4) / L_G(0)$. Therefore as a $G$-module $\wedge^2(W)$ has composition factors $L_G(\omega_1)$, $L_G(\omega_4)$, $L_G(\omega_4)$, $L_G(\omega_3)$, and $L_G(0)$.\end{proof}

\begin{table}[htp]
\centering

\begin{tabular}{| l | l | l |}
\hline
$G$ & $\lambda$ & $L_G(\lambda)$ orthogonal? \\
\hline
$E_6$ & $\omega_2$ & yes \\
$E_6$ & $\omega_4$ & yes \\
$E_6$ & $\omega_1 + \omega_6$ & yes \\

$E_7$ & $\omega_1$ & no \\
$E_7$ & $\omega_2$ & yes \\
$E_7$ & $\omega_5$ & yes \\
$E_7$ & $\omega_6$ & yes \\
$E_7$ & $\omega_7$ & yes \\

$E_8$ & $\omega_1$ & yes \\
$E_8$ & $\omega_7$ & yes \\
$E_8$ & $\omega_8$ & yes \\

\hline
\end{tabular}
\caption{Orthogonality of some $L_G(\lambda)$ for $G$ of type $E_6$, $E_7$ and $E_8$.}\label{table:typeE}
\end{table}

We finish this section by verifying the information given in Table \ref{table:typeE}. 

Suppose that $G$ is of type $E_6$. We have $V_G(\omega_2) = L_G(\omega_2)$ and $V_G(\omega_1 + \omega_6) = L(\omega_1 + \omega_6) / L(\omega_2)$ by \cite{LubeckWebsite}, so $L_G(\omega_2)$ and $L_G(\omega_1 + \omega_6)$ are orthogonal by Proposition \ref{char2quad} (iv). We show next that $L(\omega_4)$ is orthogonal. Now $W = L_G(\omega_2)$ is self-dual and $\dim W = 78$, so by Lemma \ref{lemma:altsquarecombined} (ii) it will be enough to prove that $\wedge^2(W)$ has exactly one trivial composition factor and that $L_G(\omega_4)$ occurs in $\wedge^2(W)$ with odd multiplicity.

Now $V_G(\omega_2) = L_G(\omega_2)$, and then a computation with Magma \cite{MAGMA} (or \cite[8.12, pg. 136]{Donkin}) shows that $\ch \wedge^2(W) = \ch V_G(\omega_2) + \ch V_G(\omega_4)$. From \cite{LubeckWebsite}, we can deduce that the composition factors of $V_G(\omega_4)$ are $L_G(\omega_4)$, $L_G(\omega_1 + \omega_6)$, $L_G(\omega_1 + \omega_6)$, $L_G(\omega_2)$ and $L_G(0)$. Thus $L_G(0)$ and $L_G(\omega_4)$ both occur exactly once as a composition factor of $\wedge^2(W)$.

Consider next $G$ of type $E_7$. We can assume that $G$ is simply connected. Then the Weyl module $V_G(\omega_1)$ is the Lie algebra of $G$, and $L_G(\omega_1)$ is not orthogonal by \cite[Theorem 3.4 (a)]{GowWillems1}. We have $V_G(\omega_2) = L_G(\omega_2)$, $V_G(\omega_5) = L_G(\omega_5) / L_G(\omega_1 + \omega_7)$ and $V_G(\omega_7) = L_G(\omega_7)$ by the data in \cite{LubeckWebsite}. Therefore $L_G(\omega_2)$, $L_G(\omega_5)$ and $L_G(\omega_7)$ are orthogonal by Proposition \ref{char2quad} (iv).

We show that $L_G(\omega_6)$ is orthogonal. Now for $W = L_G(\omega_7)$ we have $\dim W = 56$, so by Lemma \ref{lemma:altsquarecombined} (ii), it will be enough to prove that $\wedge^2(W)$ has exactly two trivial composition factors and that $L_G(\omega_6)$ occurs in $\wedge^2(W)$ with odd multiplicity. Now $V_G(\omega_7) = L_G(\omega_7)$, so by a computation with Magma \cite{MAGMA} we see $\ch \wedge^2(W) = \ch V_G(\omega_6) + \ch V_G(0)$. From \cite{LubeckWebsite}, we see that $V_G(\omega_6)$ has composition factors $L_G(\omega_6)$, $L_G(\omega_1)$, $L_G(\omega_1)$ and $L_G(0)$. Therefore $\wedge^2(W)$ has exactly two trivial composition factors and $L_G(\omega_6)$ occurs exactly once as a composition factor.

For $G$ of type $E_8$, we have $V_G(\omega_8) = L_G(\omega_8)$ and so $L_G(\omega_8)$ is orthogonal by \ref{char2quad} (iv). Finally, we show that $L_G(\omega_1)$ and $L_G(\omega_7)$ are orthogonal. For $W = L_G(\omega_8)$ we have $\dim W = 248$, so by Lemma \ref{lemma:altsquarecombined} (ii), it will be enough to prove that $\wedge^2(W)$ has exactly two trivial composition factors and that $L_G(\omega_1)$ and $L_G(\omega_7)$ occur in $\wedge^2(W)$ with odd multiplicity. By a computation with Magma \cite{MAGMA} we see that $\ch \wedge^2(W) = \ch V_G(\omega_8) + \ch V_G(\omega_7)$. From \cite{LubeckWebsite}, we see that $V_G(\omega_7)$ has composition factors $L_G(\omega_7)$, $L_G(\omega_1)$, $L_G(\omega_8)$, $L_G(0)$, and $L_G(0)$. Therefore $\wedge^2(W)$ has exactly two trivial composition factors and both $L_G(\omega_1)$ and $L_G(\omega_7)$ occur with multiplicity one.

\section{Applications and further work}\label{section:final}

\noindent In this section, we describe consequences of some of our findings and propose some questions motivated by Problem \ref{mainproblem}. Unless otherwise mentioned, we let $G$ be a simply connected algebraic group over $\fieldsymbol$ and we assume that $\Ch \fieldsymbol = 2$.

\subsection{Connection with representations of the symmetric group}

\noindent Denote the symmetric group on $n$ letters by $\Sigma_n$. We will describe a connection between orthogonality of certain irreducible $\fieldsymbol [\Sigma_n]$-representations and the irreducible representations $L(\omega_r)$ of $\Sp_{2l}(\fieldsymbol)$. This is done by an application of Proposition \ref{fundamentalquadC} and various results from the literature. The result is not too surprising, since the representation theory of the symmetric group plays a key role in the representation theory of the modules $L(\omega_r)$ of $\Sp_{2l}(\fieldsymbol)$. For example, many of the results that we applied in the proof of Proposition \ref{fundamentalquadC} above are based on studying certain $\fieldsymbol [\Sigma_n]$-representations associated with $V(\omega_r)$.

It is well known that there exists an embedding $\Sigma_{2l+1} < \Sp_{2l}(\fieldsymbol) = G$ for all $l \geq 2$ (see e.g. \cite{GowKleshchev} or \cite[Theorem 8.9]{Taylor}). Therefore if a representation $V$ of $G$ is orthogonal, it is clear that the same is true for the restriction $V \downarrow \Sigma_{2l+1}$. We will proceed to show that the converse is also true when $V = L(\omega_r)$ for $2 \leq r \leq l$, which does not seem to be a priori obvious.

First of all, the following result due to Gow and Kleshchev \cite[Theorem 1.11]{GowKleshchev} gives the structure of $L(\omega_r) \downarrow \Sigma_{2l+1}$.

\begin{lause}\label{gowkleshchevthm}
Let $1 \leq r \leq l$. Then the restriction $L(\omega_r) \downarrow \Sigma_{2l+1}$ is irreducible, and it is isomorphic to the irreducible $\fieldsymbol [\Sigma_n]$-module $D^{(2l+1-r,r)}$ labeled by the partition $(2l+1-r,r)$ of $2l+1$.
\end{lause}

\noindent Now Gow and Quill have determined in \cite{GowQuill} when the irreducible $\fieldsymbol [\Sigma_n]$-modules $D^{(n-r,r)}$ are orthogonal. Their result is the following.

\begin{lause}\label{gowquillthm}
Let $0 \leq r \leq n$. Then the $\fieldsymbol [\Sigma_n]$-module $D^{(n-r,r)}$ is not orthogonal if and only if $r = 2^{j}$, $j \geq 0$ and $n \equiv k \mod{2^{j+2}}$ for some $2^{j+1} + 2^j - 1 \leq k \leq 2^{j+2} - 2$.
\end{lause}

\noindent In the case where $n = 2l+1$, one can express the result in the following way.

\begin{seur}\label{symcorollary}
Let $n = 2l+1$ and $1 \leq r \leq l$. Then $\fieldsymbol [\Sigma_n]$-module $D^{(n-r,r)}$ is not orthogonal if and only if $r = 2^{i+1}$, $i \geq 0$ and $l+1 \equiv 2^{i+1} + 2^i + t \mod{2^{i+2}}$ for some $0 \leq t < 2^i$.
\end{seur}

\begin{proof}
By Theorem \ref{gowquillthm}, the module $D^{(n-1,1)}$ is not orthogonal if and only if $2l+1 \equiv 2 \mod{4}$, which never happens. Therefore $D^{(n-1,1)}$ is always orthogonal, as desired.

Consider then $r > 1$. According to Theorem \ref{gowquillthm}, if $D^{(n-r,r)}$ is not orthogonal, then $r = 2^j$ for some $j > 0$. In this case $D^{(n-r,r)}$ is not orthogonal if and only if $2l+1 \equiv k \mod{2^{j+2}}$ for some $2^{j+1} + 2^j - 1 \leq k \leq 2^{j+2} - 2$. This is equivalent to saying that $2(l+1) \equiv k \mod{2^{j+2}}$ for some $2^{j+1} + 2^j \leq k \leq 2^{j+2} - 1$. Now this condition is equivalent to $l+1 \equiv k \mod{2^{j+1}}$ for some $2^j + 2^{j-1} \leq k \leq 2^{j+1} - 2$, giving the claim.\end{proof}

\noindent Finally combining Theorem \ref{gowkleshchevthm}, Proposition \ref{fundamentalquadC} and Corollary \ref{symcorollary} gives the following result.

\begin{prop}
Let $2 \leq r \leq l$. Then $L(\omega_r)$ is orthogonal for $\Sp_{2l}(k)$ if and only if $L(\omega_r) \downarrow \Sigma_{2l+1}$ is orthogonal for $\Sigma_{2l+1}$.
\end{prop}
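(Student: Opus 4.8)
The plan is to combine the three ingredients that immediately precede the statement: Gow--Kleshchev's identification of the restriction $L(\omega_r)\downarrow\Sigma_{2l+1}$ with the irreducible $\fieldsymbol[\Sigma_n]$-module $D^{(2l+1-r,r)}$ (Theorem \ref{gowkleshchevthm}), my own Proposition \ref{fundamentalquadC} describing when $L(\omega_r)$ is orthogonal for $\Sp_{2l}(\fieldsymbol)$, and Corollary \ref{symcorollary} describing when $D^{(2l+1-r,r)}$ is orthogonal for $\Sigma_{2l+1}$. Since all three results have already been established, the proof is essentially a matter of matching up the combinatorial conditions.

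First I would recall the easy direction, which requires no computation: there is an embedding $\Sigma_{2l+1}<\Sp_{2l}(\fieldsymbol)=G$ (as cited in the preamble), so if $L(\omega_r)$ carries a non-degenerate $G$-invariant quadratic form, then restricting that form to $\Sigma_{2l+1}$ shows $L(\omega_r)\downarrow\Sigma_{2l+1}$ is orthogonal for $\Sigma_{2l+1}$. For the converse, I would invoke Theorem \ref{gowkleshchevthm} to replace $L(\omega_r)\downarrow\Sigma_{2l+1}$ by $D^{(2l+1-r,r)}$, and then argue by comparing conditions. By Proposition \ref{fundamentalquadC}, for $2\le r\le l$ the module $L(\omega_r)$ fails to be orthogonal exactly when $r=2^{i+1}$ for some $i\ge 0$ and $l+1\equiv 2^{i+1}+2^i+t\pmod{2^{i+2}}$ with $0\le t<2^i$. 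By Corollary \ref{symcorollary}, $D^{(n-r,r)}$ (with $n=2l+1$) fails to be orthogonal under precisely the same condition: $r=2^{i+1}$, $i\ge 0$, $l+1\equiv 2^{i+1}+2^i+t\pmod{2^{i+2}}$, $0\le t<2^i$. Hence the two non-orthogonality conditions coincide term for term, so $L(\omega_r)$ is orthogonal for $\Sp_{2l}(\fieldsymbol)$ if and only if $D^{(2l+1-r,r)}\cong L(\omega_r)\downarrow\Sigma_{2l+1}$ is orthogonal for $\Sigma_{2l+1}$, which is the claim.

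There is essentially no genuine obstacle here, since the hard work has been done in Proposition \ref{fundamentalquadC} and in translating Gow--Quill's Theorem \ref{gowquillthm} into Corollary \ref{symcorollary}; the only point requiring any care is to make sure the statement is correctly restricted to $2\le r\le l$, where both descriptions are literally identical. (For $r=1$ the claim would be vacuous on the symmetric-group side but $L(\omega_1)$ is never orthogonal for $\Sp_{2l}$, so one must indeed exclude $r=1$, as the proposition does.) I would therefore present the proof in two short sentences --- the embedding argument for one direction, and the observation that Proposition \ref{fundamentalquadC} and Corollary \ref{symcorollary} give identical conditions for the other --- with a pointer to Theorem \ref{gowkleshchevthm} for the identification of the restriction.
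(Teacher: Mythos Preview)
Your proposal is correct and follows exactly the paper's approach: the paper's proof is the single line ``combining Theorem \ref{gowkleshchevthm}, Proposition \ref{fundamentalquadC} and Corollary \ref{symcorollary}'', and you have spelled out precisely that combination, together with the easy direction via the embedding $\Sigma_{2l+1}<\Sp_{2l}(\fieldsymbol)$. One tiny quibble: for $r=1$ the equivalence is not ``vacuous'' but actually fails (Corollary \ref{symcorollary} shows $D^{(n-1,1)}$ is orthogonal while $L(\omega_1)$ is not), which is indeed why the hypothesis $r\ge 2$ is needed.
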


\subsection{Reduction for Problem \ref{mainproblem}}

\noindent To determine which irreducible $G$-modules are orthogonal, it is enough to consider $L_G(\lambda)$ with $\lambda \in X(T)^+$ a $2$-restricted dominant weight (Remark \ref{restrictedremark}). For groups of exceptional type, this leaves finitely many $\lambda$ to consider. For groups of classical type, we can further reduce the question to $G$ of type $A_l$ and type $C_l$. This follows from the next two lemmas. Note that in Lemma \ref{f:reductionB}, we identify the fundamental dominant weights of $B_l$ and $C_l$ by abuse of notation.

%

\begin{lemma}\label{f:reductionB}
Let $\lambda =  \sum_{i = 1}^{l} a_i \omega_i$, where $l \geq 2$ and $a_i \in \{0,1\}$ for all $1 \leq i \leq l$. 

\begin{enumerate}[\normalfont (i)]
\item Let $G$ be of type $B_l$ or $C_l$. If $a_l = 1$, then $V = L_G(\lambda)$ is orthogonal, except when $l = 2$ and $\lambda = \omega_2$.
\item The irreducible $B_l$-representation $L_{B_l}(\lambda)$ is orthogonal if and only if the irreducible $C_l$-representation $L_{C_l}(\lambda)$ is orthogonal.
\end{enumerate}
\end{lemma}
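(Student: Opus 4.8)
The plan is to establish (i) first and then deduce (ii). For (i), write $\lambda = \mu + \omega_l$ with $\mu = \sum_{i=1}^{l-1} a_i \omega_i$. If $\mu = 0$, so $\lambda = \omega_l$, then for $G$ of type $C_l$ the assertion is precisely that $L_{C_l}(\omega_l)$ is orthogonal if and only if $l \geq 3$, which is Example \ref{spinexample}; for $G$ of type $B_l$ one transfers this across the exceptional isogeny $\varphi \colon B_l \to C_l$, using that $L_{C_l}(\omega_l)^\varphi$ is a Frobenius twist of $L_{B_l}(\omega_l)$ and that orthogonality is preserved both under twisting by a surjective morphism and under Frobenius twists. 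This is what produces the exceptional case $l = 2$, $\lambda = \omega_2$. Now suppose $\mu \neq 0$. Since $-w_0 = \mathrm{id}$ for $G$ of type $B_l$ or $C_l$, both $L_G(\mu)$ and $L_G(\omega_l)$ are nontrivial self-dual modules, hence symplectic by Lemma \ref{fonglemma}, so by Lemma \ref{tensorlemma} the module $M := L_G(\mu) \otimes L_G(\omega_l)$ is orthogonal. The weight $\lambda = \mu + \omega_l$ is the highest weight of $M$ and its weight space in $M$ is one-dimensional, so $L_G(\lambda)$ is a nontrivial composition factor of $M$ of multiplicity $1$. Then \cite[Lemma 1.3]{GowWillems1} shows that $L_G(\lambda)$ is orthogonal, which proves (i).

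For (ii), I would exploit the exceptional isogeny $\varphi \colon B_l \to C_l$ between simply connected groups in characteristic $2$, which carries irreducible modules to irreducible modules, with $L_{C_l}(\nu)^\varphi \cong L_{B_l}(\varphi^* \nu)$; here $\varphi^* \omega_i = \omega_i$ for $1 \leq i \leq l-1$ and $\varphi^* \omega_l = 2 \omega_l$, the first because $L_{C_l}(\omega_i)^\varphi \cong L_{B_l}(\omega_i)$ and the second because $L_{C_l}(\omega_l)^\varphi$ is a Frobenius twist of $L_{B_l}(\omega_l)$. Since $\varphi$ is surjective, twisting by $\varphi$ preserves orthogonality in both directions. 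If $a_l = 0$ then $\varphi^* \lambda = \lambda$, so $L_{C_l}(\lambda)^\varphi \cong L_{B_l}(\lambda)$ and the two modules are simultaneously orthogonal or not. If $a_l = 1$ then by part (i) both $L_{B_l}(\lambda)$ and $L_{C_l}(\lambda)$ are orthogonal, unless $l = 2$ and $\lambda = \omega_2$; in that one remaining case $L_{C_2}(\omega_2)$ is not orthogonal by Example \ref{exampleC2}, and $L_{B_2}(\omega_2)$ — the $4$-dimensional spin module of $B_2$, which under the exceptional isomorphism $\mathrm{Spin}_5 \cong \mathrm{Sp}_4$ is the natural symplectic module $L_{C_2}(\omega_1)$ — is not orthogonal by Proposition \ref{naturalquad}. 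Hence the equivalence in (ii) holds in every case.

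The step I expect to require the most care is the final appeal to \cite[Lemma 1.3]{GowWillems1} in the case $\mu \neq 0$ of (i), namely descending orthogonality from the tensor product $M$ to its multiplicity-one composition factor $L_G(\lambda)$. Elsewhere in the paper (for instance in the proof of Lemma \ref{lemma:altsquarecombined}) this result is invoked only after passing to a module with no trivial composition factors, so one should similarly verify that $M = L_G(\mu) \otimes L_G(\omega_l)$ has no trivial composition factor — which is plausible since $L_G(\mu) \not\cong L_G(\omega_l)$, and in any event only trivial composition factors could obstruct the argument — or else add a short supplementary argument dealing with them. The remaining verifications are routine.
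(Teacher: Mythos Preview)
Your argument for (ii), and for the case $\mu = 0$ of (i), coincides with the paper's. The divergence is in the $\mu \neq 0$ case of (i), where the paper takes a different route that bypasses the very step you identify as delicate.

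The paper does not form $M = L_G(\mu)\otimes L_G(\omega_l)$ at all. Instead it passes through the exceptional isogeny: for $G = C_l$, twisting by $\varphi\colon B_l\to C_l$ gives
\[
L_{C_l}(\lambda)^{\varphi}\;\cong\;L_{B_l}\Bigl(\textstyle\sum_{i=1}^{l-1}a_i\omega_i + 2\omega_l\Bigr)\;\cong\;L_{B_l}(\mu)\otimes L_{B_l}(\omega_l)^{F},
\]
the second isomorphism by Steinberg's tensor product theorem. Thus after twisting, $L_{C_l}(\lambda)$ is \emph{equal} to a tensor product of two nontrivial self-dual irreducibles, and Lemma~\ref{tensorlemma} applies directly; orthogonality transfers back across $\varphi$. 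The $B_l$ case is symmetric via $\tau\colon C_l\to B_l$. No descent from a tensor product to a composition factor is ever required, so \cite[Lemma 1.3]{GowWillems1} is not invoked.

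Your route, by contrast, has $L_G(\lambda)$ only as a composition factor of $M$, and the appeal to \cite[Lemma 1.3]{GowWillems1} needs $M$ to have no trivial composition factor. The justification you offer --- that $L_G(\mu)\not\cong L_G(\omega_l)$ --- only rules out the trivial module from the socle and head of $M$, not from its composition series; in positive characteristic these are different questions. For $G = B_l$ one can in fact close the gap cheaply (the weights of $L_{B_l}(\omega_l)$ lie in $\omega_l + Q$ and those of $L_{B_l}(\mu)$ in $Q$, so $0$ is never a weight of $M$), but for $G = C_l$ the weight $0$ can occur in $M$ whenever $\lambda$ lies in the root lattice, and then the absence of trivial composition factors is not obvious. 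The paper's isogeny-plus-Steinberg trick is both shorter and sidesteps this issue entirely.
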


\begin{proof}

\begin{enumerate}[(i)]
\item Let $\varphi: B_l \rightarrow C_l$ be the usual exceptional isogeny between simply connected groups of type $B_l$ and $C_l$ \cite[Theorem 28]{SteinbergNotes}. Then $$L_{C_l}(\lambda)^\varphi \cong L_{B_l}( \sum_{i = 1}^{l-1} a_i \omega_i + 2a_l\omega_l) \cong L_{B_l}(\sum_{i = 1}^{l-1} a_i \omega_i) \otimes L_{B_l}(a_l\omega_l)^F$$ where the last equality follows by Steinberg's tensor product theorem. 

Assume that $a_l = 1$. Note that a $C_l$-module $V$ is orthogonal if and only if $V^\varphi$ is an orthogonal $B_l$-module. Thus it follows from Lemma \ref{tensorlemma} that $L_{C_l}(\lambda)$ is orthogonal, except possibly when $\lambda = \omega_l$. Finally, we know that $L_{C_l}(\omega_l)$ is orthogonal if and only if $l \geq 3$ by Example \ref{spinexample}. This proves the claim for $G$ of type $C_l$. 

For $G$ of type $B_l$, let $\tau: C_l \rightarrow B_l$ be the usual exceptional isogeny between simply connected groups of type $C_l$ and $B_l$. Then $$L_{B_l}(\lambda)^\tau \cong L_{C_l}( \sum_{i = 1}^{l-1} 2a_i \omega_i + a_l\omega_l) \cong L_{C_l}(\sum_{i = 1}^{l-1} a_i \omega_i)^F \otimes L_{C_l}(a_l\omega_l),$$ and now the claim follows as in the type $C_l$ case.

\item If $a_l = 1$, the claim follows from (i).  If $a_l = 0$, then $L_{C_l}(\lambda)^\varphi \cong L_{B_l}(\lambda)$ and the claim follows since a $C_l$-module $V$ is orthogonal if and only if $V^\varphi$ is an orthogonal $B_l$-module.
\end{enumerate}\end{proof}

\begin{lemma}\label{f:reductionD}
Let $\lambda =  \sum_{i = 1}^{l} a_i \omega_i$, where $l \geq 4$ and $a_i \in \{0, 1\}$ for all $1 \leq i \leq l$.

\begin{enumerate}[\normalfont (i)]
\item If $a_{l-1} \neq a_l$, then $L_{D_l}(\lambda)$ is orthogonal if it is self-dual.
\item If $a_{l-1} = a_l$, then $L_{D_l}(\lambda)$ is orthogonal if and only if $L_{C_l}(\sum_{i = 1}^{l-1} a_i \omega_i)$ is orthogonal, except when $\lambda = \omega_1$.
\end{enumerate}
\end{lemma}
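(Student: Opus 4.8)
The plan is to handle the two parts by quite different means: part (i) is a direct application of Proposition \ref{char2quad}, while part (ii) rests on the embedding $D_l < C_l$ together with the transfer result Lemma \ref{lemma:typeDrestriction}.

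For part (i), I would first argue that when $a_{l-1} \neq a_l$ the Weyl module $V_{D_l}(\lambda)$ has no trivial composition factor. The point is that every composition factor $L_{D_l}(\mu)$ of $V_{D_l}(\lambda)$ satisfies $\mu \equiv \lambda$ modulo the root lattice $Q$ of $D_l$, whereas $a_{l-1} \neq a_l$ forces the image of $\lambda$ in the fundamental group $X(T)/Q$ of $D_l$ to be nonzero: this follows from the facts that $\omega_{l-1}$ and $\omega_l$ have distinct, nonzero classes in $X(T)/Q$, that $\omega_1 \equiv \omega_{l-1}+\omega_l$, and that the classes of $\omega_1,\dots,\omega_{l-2}$ lie in $\{[0],[\omega_1]\}$. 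Hence $L_{D_l}(0)$, which lies in the zero coset, cannot occur in $V_{D_l}(\lambda)$. Since $\lambda \neq 0$ (because $a_{l-1}\neq a_l$) and $G=D_l$ is not of type $C_l$, if in addition $L_{D_l}(\lambda)$ is self-dual then Proposition \ref{char2quad}(iv) applies and shows $L_{D_l}(\lambda)$ is orthogonal. Note that this argument uses nothing about the $C_l$-embedding and does not need to distinguish $l$ even from $l$ odd.

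For part (ii), I would use the subgroup $H = D_l < G = C_l$ generated by the short root subgroups (available since $\Ch \fieldsymbol = 2$), exactly as in Section \ref{section:classicalfundamental}. Set $\mu = \sum_{i=1}^{l-1} a_i \omega_i \in X(T)^+$ for $C_l$. By Seitz's classification \cite[Theorem 4.1]{SeitzClassical}, the restriction $L_{C_l}(\mu) \downarrow D_l$ is irreducible with highest weight $\sum_{i=1}^{l-2} a_i \omega_i + a_{l-1}(\omega_{l-1}+\omega_l)$, which when $a_{l-1}=a_l$ is precisely $\lambda$; as all $a_i \in \{0,1\}$, this restriction is $2$-restricted. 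Now assume $\lambda \neq \omega_1$. If $\lambda = 0$ both sides of the claimed equivalence hold trivially, so assume also $\lambda \neq 0$; then a short check gives $\mu \notin \{0,\omega_1\}$, so Lemma \ref{lemma:typeDrestriction} applies to $V = L_{C_l}(\mu)$ and yields that $L_{C_l}(\mu)$ is orthogonal for $C_l$ if and only if $L_{D_l}(\lambda)$ is orthogonal for $D_l$. Since $L_{C_l}(\mu) = L_{C_l}\bigl(\sum_{i=1}^{l-1} a_i \omega_i\bigr)$, this is the assertion. (Self-duality of $\lambda$ for $D_l$ and of $\mu$ for $C_l$ holds automatically when $a_{l-1}=a_l$, by Table \ref{dualitytable}, so the equivalence is not vacuous on either side.) For the excluded case $\lambda = \omega_1$, which has $a_{l-1}=a_l=0$, the module $L_{D_l}(\omega_1)$ is the natural module of $\SO_{2l}$ and carries the defining non-degenerate quadratic form, so it is orthogonal, whereas $L_{C_l}(\omega_1)$ is the natural module of $\Sp_{2l}$ and is not orthogonal by Proposition \ref{naturalquad}; so the equivalence genuinely fails there, consistently with the exclusion of $L_G(\omega_1)$ in the hypotheses of Lemma \ref{lemma:typeDrestriction}.

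The step I expect to require the most care is extracting from Seitz \cite{SeitzClassical} the precise shape of $L_{C_l}(\mu) \downarrow D_l$ for general $\mu$ with coefficients in $\{0,1\}$ supported on nodes $1,\dots,l-1$: namely that it remains irreducible even once $\mu$ involves $\omega_{l-1}$, and that $\omega_{l-1}$ of $C_l$ restricts to $\omega_{l-1}+\omega_l$ of $D_l$ on the maximal torus. Everything else — the coset computation in $X(T)/Q$ for part (i), and the bookkeeping of the degenerate weights $\lambda = 0$ and $\mu \in \{0,\omega_1\}$ — should be routine.
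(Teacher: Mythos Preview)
Your proposal is correct and follows essentially the same route as the paper: for (i) you observe that $a_{l-1}\neq a_l$ forces $\lambda\notin Q$, so $V_{D_l}(\lambda)$ has no trivial composition factor and Proposition~\ref{char2quad}(iv) applies; for (ii) you invoke \cite[Theorem 4.1]{SeitzClassical} to identify $L_{C_l}(\mu)\downarrow D_l$ with $L_{D_l}(\lambda)$ and then apply Lemma~\ref{lemma:typeDrestriction}. Your treatment is in fact more careful than the paper's in handling the degenerate cases $\lambda=0$ and $\lambda=\omega_1$, and in spelling out the coset computation in $X(T)/Q$; note also that the $D_l$ subsystem of $C_l$ is generated by the \emph{short} root subgroups (as you say, and as in Section~\ref{section:classicalfundamental} and Lemma~\ref{lemma:typeDrestriction}), so the word ``long'' in the paper's proof of (ii) is a slip.
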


\begin{proof}
\begin{enumerate}[(i)]
\item If $a_{l-1} \neq a_l$, then for example by \cite[Table 13.1]{Humphreys} we see that for $G$ of type $D_l$, the weight $\lambda$ is not a sum of roots. Therefore $L_G(0)$ cannot be a composition factor of $V_G(\lambda)$, and thus $L_G(\lambda)$ is orthogonal by Proposition \ref{char2quad} (iv).

\item Suppose that $a_{l-1} = a_l$. Considering $D_l < C_l$ as the subsystem subgroup generated by long roots, we have $L_{C_l}(\sum_{i = 1}^{l-1} a_i \omega_i) \downarrow D_l \cong L_{D_l}(\lambda)$ by \cite[Theorem 4.1]{SeitzClassical}. Now the claim follows from Lemma \ref{lemma:typeDrestriction}.
\end{enumerate}
\end{proof}



\subsection{Application to maximal subgroups of classical groups}\label{subsec:applmax}

\noindent In this subsection only, we allow $\Ch \fieldsymbol$ to be arbitrary.

As mentioned in the introduction, one motivation for Problem \ref{mainproblem} is in the study of maximal closed connected subgroups of classical groups. Let $\operatorname{Cl}(V)$ be a classical simple algebraic group, that is, $\operatorname{Cl}(V) = \SL(V)$, $\operatorname{Cl}(V) = \Sp(V)$, or $\operatorname{Cl}(V) = \SO(V)$. Finding maximal closed connected subgroups of $\operatorname{Cl}(V)$ can be reduced to the representation theory of simple algebraic groups. We proceed to explain how this is done. For more details, see \cite{LiebeckSeitz} and \cite{SeitzClassical}.

In \cite{LiebeckSeitz}, certain collections $\mathscr{C}_1$, $\ldots$, $\mathscr{C}_6$ of \emph{geometric subgroups} were defined in terms of the natural module $V$ and its geometry. A reduction theorem due to Liebeck and Seitz \cite[Theorem 1]{LiebeckSeitz} implies that for a positive-dimensional maximal closed subgroup $X$ of $\operatorname{Cl}(V)$ one of the following holds:

\begin{enumerate}[(i)]
\item $X$ belongs to some $\mathscr{C}_i$,
\item The connected component $X^\circ$ is simple, and $V \downarrow X^\circ$ is irreducible and tensor-indecomposable.
\end{enumerate}

In particular, the reduction theorem implies the following.

\begin{lause}\label{maximalreduction}
Let $X < \operatorname{Cl}(V)$ be a subgroup maximal among the closed connected subgroups of $\operatorname{Cl}(V)$. Then one of the following holds:

\begin{enumerate}[\normalfont (i)]
\item $X$ is contained in a member of some $\mathscr{C}_i$,
\item $X$ is simple, and $V \downarrow X$ is irreducible and tensor-indecomposable.
\end{enumerate}

\end{lause}


The maximal closed connected subgroups in case (i) of Theorem \ref{maximalreduction} are well understood \cite[Theorem 3]{SeitzClassical}. Furthermore, the maximal closed connected subgroups occurring in case (ii) of Theorem \ref{maximalreduction} can also be described. These were essentially determined by Seitz \cite{SeitzClassical} and Testerman \cite{TestermanIrr}. The result can be stated in the following theorem, which tells when an irreducible tensor-indecomposable subgroup is not maximal.

\begin{lause}\label{SeitzRestriction}
Let $Y$ be a simple algebraic group and let $V$ be a non-trivial irreducible tensor-indecomposable $p$-restricted and rational $Y$-module. If $X$ is a closed proper connected subgroup of $Y$ such that $X$ is simple and $V \downarrow X$ is irreducible, then $(X, Y, V)$ occurs in \cite[Table 1]{SeitzClassical}.
\end{lause}

To refine the characterization of maximal closed connected subgroups of $\operatorname{Cl}(V)$ given in \cite[Theorem 3]{SeitzClassical}, one should determine which of $\SL(V)$, $\Sp(V)$ and $\SO(V)$ contain $X$ and $Y$ in Theorem \ref{SeitzRestriction}. 

For example, let $Y$ be simple of type $D_5$ and let $X < Y$ be simple of type $B_4$ embedded in the usual way. Then for $V = L_Y(\omega_5)$ we have $V \downarrow X = L_X(\omega_4)$; this situation corresponds to entry $\operatorname{IV}_1$ in \cite[Table 1]{SeitzClassical}. Here $V$ is not self-dual as a $Y$-module, so $Y < \SL(V)$ only. However, $V \downarrow X$ is self-dual and $X < \SO(V)$ if $p \neq 2$, and $X < \SO(V) < \Sp(V)$ if $p = 2$ (see Table \ref{dualitytable} and Theorem \ref{mainprop}). In this situation $Y$ is maximal in $\SL(V)$, while $X$ is maximal in $\SO(V)$.

In fact, the results we have presented in this text allow one to determine for almost all $(X, Y, V)$ occurring in \cite[Table 1]{SeitzClassical} whether $V \downarrow X$ and $V \downarrow Y$ are orthogonal, symplectic, both, or neither. If $p \neq 2$, then this is easily done using Table \ref{dualitytable}. 

For $p = 2$, we list this information in Table \ref{table:embeddings}, which is deduced as follows. Entry $\operatorname{IV}_1$ is a consequence of Lemma \ref{f:reductionB}, Example \ref{spinexample} and Lemma \ref{f:reductionD}. In entry $S_3$, we have $V \downarrow Y = L_{C_3}(\omega_2)$ which is orthogonal by Example \ref{exampleC2}, and thus $V \downarrow X$ is also orthogonal. In entry $S_4$, we have $V \downarrow Y = L_{C_3}(\omega_1 + \omega_2)$, which is orthogonal by Proposition \ref{char2quad} (iv) since $V_{C_3}(\omega_1 + \omega_2)$ is irreducible. Entry $\operatorname{S}_6$ follows from Lemma \ref{tensorlemma} and Lemma \ref{f:reductionD}. In entries $\operatorname{S}_7$, $\operatorname{S}_8$, and $\operatorname{S}_9$, we have $V \downarrow X = L_X(\lambda)$ and $V_X(\lambda)$ is irreducible, so $V \downarrow X$ is orthogonal by Proposition \ref{char2quad} (iv). Entries $\operatorname{MR}_2$ and $\operatorname{MR}_3$ follow from Proposition \ref{F4result}, which show that $V \downarrow Y$ is orthogonal. Entry $\operatorname{MR}_5$ is a consequence of Lemma \ref{f:reductionB} (i).


\begin{table}[htp]
\centering
\begin{tabular}{| l | l | l | l |}
\hline
No. & $X < Y$ & $V \downarrow X$ & $V \downarrow Y$ \\
\hline

$\operatorname{IV}_1$ & $B_l < D_{l+1}$ & orthogonal & \begin{tabular}{@{}l@{}} $l+1$ even: orthogonal \\  $l+1$ odd: not self-dual \end{tabular}  \\
& & & \\
$\operatorname{S}_3$  & $G_2 < C_3$ & orthogonal & orthogonal  \\
$\operatorname{S}_4$  & $G_2 < C_3$ & orthogonal & orthogonal  \\
& & & \\
$\operatorname{S}_6$  & $B_{n_1} \cdots B_{n_k} < D_{1 + \sum n_i}$ & orthogonal & \begin{tabular}{@{}l@{}} $1 + \sum n_i$ even: orthogonal \\  $1 + \sum n_i$ odd: not self-dual \end{tabular} \\
& & & \\
$\operatorname{S}_7$  & $A_3 < D_7$ & orthogonal & not self-dual\\
$\operatorname{S}_8$  & $D_4 < D_{13}$ & orthogonal & not self-dual \\
$\operatorname{S}_9$  & $C_4 < D_{13}$ & orthogonal & not self-dual  \\
$\operatorname{MR}_2$ & $D_4 < F_4$ & orthogonal & orthogonal \\
$\operatorname{MR}_3$ & $C_4 < F_4$ & orthogonal & orthogonal \\
$\operatorname{MR}_4$ & $D_l < C_l$ & ? & ? \\
$\operatorname{MR}_5$ & $B_{n_1} \cdots B_{n_k} < B_{n_1 + \cdots + n_k}$ & orthogonal & orthogonal \\
\hline
\end{tabular}
\caption{Invariant forms on $V \downarrow X$ and $V \downarrow Y$ for $(X, Y, V)$ occurring in \cite[Table 1]{SeitzClassical} in the case $p = 2$.}\label{table:embeddings}
\end{table}

What remains is the entry $\operatorname{MR}_4$ from \cite[Table 1]{SeitzClassical}. Here $X = D_l$ ($l \geq 4$) embedded in $Y = C_l$ as the subsystem subgroup of long roots, and we have $V = L_Y(\sum_{i = 1}^{l-1} a_i \omega_i)$ with $a_i \in \{0,1\}$, and $V \downarrow X = L_X(\sum_{i = 1}^{l-2} a_i \omega_i + a_{l-1}(\omega_{l-1} + \omega_l))$. In this situation we do not know in general whether $V \downarrow Y$ and $V \downarrow X$ are orthogonal, but we do know that except in the case where $\sum_{i = 1}^{l-1} a_i \omega_i = \omega_1$, it is true that $V \downarrow Y$ is orthogonal if and only if $V \downarrow X$ is orthogonal (Lemma \ref{f:reductionD}). Using this fact and the information in Table \ref{table:embeddings}, we can deduce the following result.

\begin{lause}\label{theorem:maxconseq}
Let $Y$ be a simple algebraic group and let $V$ be a non-trivial irreducible tensor-indecomposable $p$-restricted $Y$-module. If $X$ is a closed proper connected subgroup of $Y$ such that $X$ is simple and $V \downarrow X$ is irreducible, then one of the following holds.

\begin{enumerate}[\normalfont (i)]
\item $V \downarrow Y$ is not self-dual.
\item Both $V \downarrow Y$ and $V \downarrow X$ are orthogonal.
\item Neither of $V \downarrow Y$ or $V \downarrow X$ is orthogonal.
\item $p = 2$, $X$ is of type $D_l$, $Y$ is of type $C_l$ and $V$ is the natural module of $Y$.
\end{enumerate}
\end{lause}

\subsection{Fundamental self-dual irreducible representations}

\noindent Among the irreducible self-dual $G$-modules that are not orthogonal, so far the only ones that we know of are in some sense minimal among the self-dual irreducible modules of $G$. We make this more precise in what follows, and pose the question whether any other examples can be found.

Recall that $L_G(\lambda)$ is self-dual if and only if $\lambda = -w_0(\lambda)$, where $w_0$ is the longest element in the Weyl group. We know that any dominant weight $\lambda \in X(T)^+$ can be written uniquely as a sum of fundamental dominant weights, that is, $\lambda = \sum_{i = 1}^l a_i \omega_i$ for unique integers $a_i \geq 0$. Now similarly, there exists a collection $\mu_1, \ldots, \mu_t \in X(T)^+$ such that $\mu_i = -w_0(\mu_i)$ for all $i$, and such that any $\lambda \in X(T)^+$ with $\lambda = -w_0(\lambda)$ can be written uniquely as $\sum_{i = 1}^t a_i \mu_i$ with $a_i \geq 0$. For each simple type, these $\mu_i$ are listed below.

\begin{itemize}
\item Type $A_l$ ($l$ odd): $\mu_i = \omega_i + \omega_{l+1-i}$ for $1 \leq i \leq \frac{l-1}{2}$, and $\mu_{\frac{l+1}{2}} = \omega_{\frac{l+1}{2}}$.
\item Type $A_l$ ($l$ even): $\mu_i = \omega_i + \omega_{l+1-i}$ for $1 \leq i \leq \frac{l}{2}$.
\item Types $B_l$, $C_l$, $D_l$ ($l$ even), $G_2$, $F_4$, $E_7$, and $E_8$: $\mu_i = \omega_i$ for $1 \leq i \leq \rank G$.
\item Type $D_l$ ($l$ odd): $\mu_i = \omega_i$ for $1 \leq i \leq l-2$, and $\mu_{l-1} = \omega_{l-1} + \omega_l$.
\item Type $E_6$: $\mu_1 = \omega_1 + \omega_6$, $\mu_2 = \omega_2$, $\mu_3 = \omega_3 + \omega_5$, and $\mu_4 = \omega_4$. 
\end{itemize}

Currently the only known examples of non-trivial irreducible modules $L_G(\lambda)$ that are self-dual and not orthogonal are of the form $L_G(\mu_i)$. Are there any others?

\begin{prob}\label{tensorproblem}
Let $\lambda \in X(T)^+$ be $2$-restricted and suppose that $\lambda = \lambda_1 + \lambda_2$, where $\lambda_i \in X(T)^+$ are nonzero and $-w_0(\lambda_i) = \lambda_i$. Is $L_G(\lambda)$ orthogonal? 
\end{prob}

If the answer to Problem \ref{tensorproblem} is yes, then our results would settle Problem \ref{mainproblem} almost completely. Indeed, a positive answer to Problem \ref{tensorproblem} would show that any non-orthogonal self-dual irreducible representation of $G$ must be equal to a Frobenius twist of $L_G(\mu_i)$ for some $i$. Our results determine the orthogonality of $L_G(\mu_i)$ when $G$ is of classical type. The non-orthogonal ones for type $A_l$ are the $L_{A_l}(\omega_i + \omega_{l+1-i})$ described in Theorem \ref{quadpropA}. For $G$ of type $B_l$, $C_l$, or $D_l$, the non-orthogonal ones are $L_G(\omega_i)$ as described in Theorem \ref{mainprop}, with the unique exception of $L_G(\omega_4 + \omega_5)$ for $G$ of type $D_5$ (arising from restriction of $L_{C_5}(\omega_4)$ to $G$).

Then a handful of $\mu_i$ still remain for exceptional types. For $G$ simple of exceptional type, the irreducibles $L_G(\mu_i)$ whose orthogonality was not decided in Section \ref{exceptionalsection} are as follows.

\begin{itemize}
\item $L_G(\omega_3 + \omega_5)$ for $G$ of type $E_6$,
\item $L_G(\omega_3)$ and $L_G(\omega_4)$ for $G$ of type $E_7$,
\item $L_G(\omega_i)$ for $2 \leq i \leq 6$ for $G$ of type $E_8$.
\end{itemize}

In any case, a natural next step towards solving Problem \ref{mainproblem} should be determining an answer to Problem \ref{tensorproblem}. The methods we have used in this paper to solve Problem \ref{mainproblem} for certain families of $L_G(\lambda)$ rely heavily on detailed information about the structure of the Weyl module $V_G(\lambda)$, which is not known in general. For small-dimensional representations the composition factors of $V_G(\lambda)$ can be found using the results of L{\"u}beck given in \cite{Lubeck} and \cite{LubeckWebsite}. However, in general this sort of information is not available, and in characteristic $2$ the composition factors of $V_G(\lambda)$ are known only in a relatively few cases. For example, for $G$ of type $E_8$ we do not even know the dimension of $L_G(\omega_i)$ for all $i$ in characteristic $2$.

\subsection{Fixed point spaces of unipotent elements}

\noindent We finish by a question about a possible orthogonality criterion for irreducible representations. Let $\varphi: G \rightarrow \SL(V)$ be a non-trivial irreducible representation of $G$. Assume that $V$ is self-dual, so that $\varphi(G) < \Sp(V)$ (Lemma \ref{fonglemma}). If $V$ is an orthogonal $G$-module, then $\varphi(G) < \operatorname{O}(V)$ and so $\varphi(G) < \SO(V)$ since $G$ is connected. Then for any unipotent element $u \in G$, the number of Jordan blocks of $\varphi(u)$ is even \cite[Proposition 6.22]{LiebeckSeitzClass}. In other words, for all $u \in G$ we have that $\dim V^u$ is even, where $V^u$ is the subspace of fixed points for $u$. Does the converse hold?

\begin{prob}\label{prob:unip}
Let $V$ be a non-trivial irreducible self-dual representation of $G$. If $V$ is not orthogonal, does there exist a unipotent element $u \in G$ such that $\dim V^u$ is odd?
\end{prob}

In Table \ref{table:unipdim}, we have listed examples (without proof) of some non-orthogonal representations $V$ of $G$ for which the answer to Problem \ref{prob:unip} is yes. If the answer to Problem \ref{prob:unip} turns out to be yes, we would have an interesting criterion for an irreducible representation $V$ of $G$ to be orthogonal. A positive answer would show that the orthogonality of an irreducible representation can be decided from the properties of individual elements of $G$.

\begin{table}[htp]
\centering
\begin{tabular}{| l | l | l | l |}
\hline
Type of $G$ & $V$ & Conjugacy class of $u$ & $\dim V^u$ \\
\hline
$A_{l}$, $l + 1 \equiv 2 \mod{4}$ & $L_G(\omega_1 + \omega_l)$ & regular & $2l+1$ \\
$C_l$ & $L_G(\omega_1)$ & regular & $1$ \\
$C_l$, $l \equiv 2 \mod{4}$ & $L_G(\omega_2)$ & regular & $l-1$ \\
$A_4$ & $L_G(\omega_2 + \omega_3)$ & $A_3$ & $19$ \\
$A_5$ & $L_G(\omega_2 + \omega_4)$ & regular & $21$ \\
$C_5$ & $L_G(\omega_4)$ & regular in $D_5$ & $21$ \\
$C_6$ & $L_G(\omega_4)$ & regular & $25$ \\
$G_2$ & $L_G(\omega_1)$ & regular & $1$ \\
$E_7$ & $L_G(\omega_1)$ & regular & $7$ \\ 
\hline
\end{tabular}
\caption{Non-orthogonal irreducible representations $V$ of $G$ with examples of $\dim V^u$ odd for some unipotent element $u \in G$.}\label{table:unipdim}
\end{table}

\section*{References}
\bibliographystyle{alpha}
\bibliography{bibliography}

\end{document}